\newenvironment{labeledlist}[2][\unskip]
{ 
  
  \begin{enumerate} }
{ \end{enumerate} }
\numberwithin{equation}{section}
\theoremstyle{plain}
\newtheorem{theorem}{Theorem}[section]
\newtheorem{corollary}[theorem]{Corollary}
\newtheorem{prop}[theorem]{Proposition}
\newtheorem{lemma}[theorem]{Lemma}
\theoremstyle{remark}
\theoremstyle{definition}
\newtheorem{definition}[theorem]{Definition}
\newcommand{\lam}{\lambda}
\newcommand{\1}{\mathbf{1}}
\newcommand{\e}{\varepsilon}
\newcommand{\N}{\mathbb{N}}
\newcommand{\R}{\mathbb{R}}
\newcommand{\Z}{\mathbb{Z}}
\newcommand{\eps}{\varepsilon}
\DeclareMathOperator{\hdim}{dim_H}
\DeclareMathOperator{\sdim}{dim_S}
\DeclareMathOperator{\ubdim}{\overline{\dim}_B}
\DeclareMathOperator{\supp}{supp}
\newcommand{\wt}{\widetilde}
\title[$L^q$ dimensions of self-similar measures]{$L^q$ dimensions of self-similar measures, and applications: a survey}
\author{Pablo Shmerkin}
\address{Department of Mathematics and Statistics, Torcuato Di Tella University, and CONICET, Buenos Aires, Argentina}
\email{pshmerkin@utdt.edu}
\urladdr{http://www.utdt.edu/profesores/pshmerkin}
\thanks{Partially supported by Projects CONICET-PIP 11220150100355 and PICT 2015-3675 (ANPCyT)}
\subjclass[2010]{Primary: 28A75, 28A80}
\keywords{Self-similar measures, Bernoulli convolutions, $L^q$ dimensions, Cantor sets, intersections}
\begin{document}

\begin{abstract}
We present a self-contained proof of a formula for the $L^q$ dimensions of self-similar measures on the real line under exponential separation (up to the proof of an inverse theorem for the $L^q$ norm of convolutions). This is a special case of a more general result of the author from [Shmerkin, Pablo. On Furstenberg's intersection conjecture, self-similar measures, and the $L^q$ norms of convolutions. Ann. of Math., 2019], and one of the goals of this survey is to present the ideas in a simpler, but important, setting. We also review some applications of the main result to the study of Bernoulli convolutions and intersections of self-similar Cantor sets.
\end{abstract}

\maketitle

\section{Introduction}

\subsection{Self-similar measures}

The purpose of this survey is to present a special, but important, case of the main result of \cite{Shmerkin19} concerning the smoothness properties of self-similar measures on the real line. Given a finite family $f_i(x)=\lambda_i x +t_i$, $i\in I$ of contracting similarities (that is, $|\lambda_i|<1$) and a corresponding probability vector $(p_i)_{i\in I}$ there is a unique Borel probability measure $\mu$ on $\R$ such that
\[
\mu = \sum_{i\in I} p_i\, f_i\mu,
\]
where here and throughout the paper, if $\nu$ is a Borel probability measure on a space $X$ and $g:X\to Y$ is a Borel map, then $g\nu$ is the push-forward measure, that is, $g\nu(B)=\nu(g^{-1}B)$ for all Borel $B$. We call the tuple $(f_i,p_i)_{i\in I}$ a \emph{weighted iterated function system}, or WIFS, and $\mu$ the corresponding \emph{invariant self-similar measure}.

Studying the properties of self-similar measures, and in particular quantifying their smoothness, is a topic of great interest since the 1930's. Let us define the \emph{similarity dimension} of a self-similar measure $\mu$ (or, rather, the generating WIFS) by
\[
\sdim(\mu) = \frac{\sum_{i\in I} p_i \log(1/p_i)}{\sum_{i\in I} p_i \log(1/\lambda_i)}.
\]
The similarity dimension is one of the simplest instances of a very widespread expression in the dimension theory of conformal dynamical systems: it has the form
\[
\frac{\text{entropy}}{\text{Lyapunov exponent}}.
\]
Indeed, $\sum_{i\in I} p_i \log(1/p_i)$ is the entropy of the probability vector $(p_i)_{i\in I}$: a quantity that measures how uniform this vector is. For example, it attains its maximal value $\log |I|$ exactly at the uniform probability vector $(1/|I|,\ldots,1/|I|)$. Lyapunov exponents quantify the average expansion or contraction of a dynamical system, and this is how the denominator $\sum_{i\in I} p_i \log(1/\lambda_i)$ should be interpreted.

It is well known that if $\sdim(\mu)<1$, then $\mu$ is purely singular with respect to Lebesgue measure. In fact, even more is true. The Hausdorff dimension of a Radon measure $\nu$ on $\R$ is defined as
\[
\hdim(\nu) = \inf\{ \hdim(A): A \text{ is Borel }, \nu(\R\setminus A)=0\}.
\]
For self-similar measures it always holds that $\sdim(\mu)\le \hdim(\mu)$, and it is clear from the definition that measures with Hausdorff dimension $<1$ must be purely singular. We also note that one always has $\hdim(\mu)\le 1$, even though it is possible that $\sdim(\mu)>1$.

Two major problems in fractal geometry are (a) understanding when one actually has $\hdim(\mu)=\sdim(\mu)$, (b) analyzing the properties of $\mu$ when $\sdim(\mu)>1$; in particular, determining whether $\mu$ is absolutely continuous and, if so, characterizing the smoothness of its density. While both problems are still wide open in this generality, major progress has been accomplished in the last few years. The goal of this article is to present one of the several directions in which progress was achieved, following \cite{Shmerkin19}. While the results of \cite{Shmerkin19} concern a wider class of measures satisfying a generalized notion of self-similarity, here we focus on the proper self-similar case, both because it is an important class in itself and because it allows us to present some of the proofs of \cite{Shmerkin19} in a technically simpler setting. In particular, some ergodic-theoretic concepts and tools are not required in the self-similar case.

\subsection{The overlaps conjecture and Hochman's theorem on exponential separation}

Let $A=\supp(\mu)$. The set $A$ is \emph{self-similar}: it satisfies that $A=\bigcup_{i\in I} f_i(A)$ (here we assume that all the $p_i$ are strictly positive; we can always remove the maps $f_i$ with $p_i=0$ to achieve this). When the pieces $f_i(A)$ are separated enough one does have an equality $\hdim(\mu)=\sdim(\mu)$. Indeed, this holds if the sets $f_i(A)$ are pairwise disjoint or, more generally, under the famous open-set condition which allows the images $f_i(A)$ to intersect but in a very limited way.

On the other hand, there are two known mechanisms that force an inequality $\hdim(\mu)<\sdim(\mu)$. The first is if $\sdim(\mu)>1$. The second is slightly less trivial but still quite simple. Suppose first that $f_i=f_j$ for some $i\neq j$. Then if we drop $f_j$ from the WIFS and replace $p_i$ by $p_i+p_j$ the invariant measure does not change. However a simple calculation reveals that the similarity dimension of the new WIFS is strictly smaller than that of the original one, and hence the Hausdorff dimension of $\mu$ is strictly smaller than the similarity dimension derived from the original WIFS. Although the calculation is slightly more involved, the same argument shows that if the maps $f_i$ do not freely generate a free semigroup or, in other words, if there exist different finite sequences $i=(i_1\ldots i_k), j=(j_1\ldots j_\ell)$ such that
\[
f_{i_1}\circ\cdots\circ f_{i_k} =  f_{j_1}\circ\cdots \circ f_{j_\ell},
\]
then one also has $\hdim(\mu)<\sdim(\mu)$. In this case we say that the WIFS has an \emph{exact overlap}. We note that if this happens then it also happens for sequences of the same length, as we could replace $i$ and $j$ by the juxtapositions $ij$ and $ji$.

A central conjecture in fractal geometry asserts that these are the \emph{only} mechanisms by which a dimension drop $\hdim(\mu)<\sdim(\mu)$ can occur (we note that in higher dimensions this is not true, but there are related conjectures, see \cite{Hochman17} for a discussion). This can be shortly stated in the form: if $\hdim(\mu)<\min(\sdim(\mu),1)$, then there is an exact overlap. The conjecture has a long history. A version for sets was stated in print in \cite{PeresSolomyak98}, where it is attributed to K. Simon. We refer to M. Hochman's paper \cite{Hochman14} for further background and discussion.

While the overlaps conjecture remains open, in \cite{Hochman14} M. Hochman accomplished a decisive step towards it. Given a finite sequence $i=(i_1\ldots i_k)$ we write $f_i=f_{i_1}\circ\cdots\circ f_{i_k}$ for short. Roughly speaking, Hochman proved that if $\hdim(\mu)<\min(\sdim(\mu),1)$ then for all large $k$ there must exist distinct pairs $i,j$ of words of length $k$ such that the maps $f_i$ and $f_j$ are super-exponentially close (as opposed to being identical, as the overlap conjecture predicts). More precisely, given two similarity maps $g_j(x)=\lambda_j x+ t_j$, $j=1,2$ we define a distance
\[
d(g_1,g_2) = \left\{
\begin{array}{ccc}
  |t_1-t_2| & \text{ if } & \lambda_1=\lambda_2 \\
  1 & \text{ if } & \lambda_1\neq \lambda_2
\end{array}
\right..
\]
(It may seem strange to define the distance to be $1$ if $\lambda_1\approx\lambda_2$ and $t_1\approx t_2$, but it turns out that only the case in which $\lambda_1=\lambda_2$ ends up being relevant.) Given a WIFS as above, we define the $k$-separation numbers $\Gamma_k$ as
\begin{equation} \label{eq:def-sep-k}
\Gamma_k =  \inf \{ d(g_i,g_j) : i=(i_1,\ldots,i_k), j=(j_1,\ldots,j_k), i\neq j \}.
\end{equation}
We say that the WIFS has \emph{exponential separation} if there exists $\delta>0$ such that
\[
\Gamma_k > \delta^k \quad\text{for infinitely many } k\in \N.
\]
Note that this notion depends only on the similarity maps $f_i$ and not on the weights $p_i$. We also remark that this condition is substantially weaker than the open set condition. We can now state Hochman's Theorem.
\begin{theorem} \label{thm:Hochman}
If $(f_i,p_i)$ is a WIFS with exponential separation and $\mu$ is the associated invariant self-similar measure, then
\[
\hdim(\mu) = \min(\sdim(\mu),1).
\]
\end{theorem}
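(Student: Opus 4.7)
The plan is to establish $\hdim(\mu) \ge \min(\sdim(\mu),1)$, since the reverse inequality is part of the general theory recalled in the excerpt. I would set this up using dyadic entropy: let $\cD_n$ denote the partition of $\R$ into half-open intervals of length $2^{-n}$ and write $H_n(\nu) = -\sum_{I\in\cD_n}\nu(I)\log_2\nu(I)$. Since self-similar measures are exact-dimensional with $\hdim(\mu) = \lim_n H_n(\mu)/n$, the target becomes $\liminf_n H_n(\mu)/n \ge \min(\sdim(\mu),1)$.

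The first step is to extract a convolution structure from self-similarity. Iterating $\mu = \sum_i p_i f_i\mu$ to depth $n$ gives $\mu = \sum_{\mathbf{i}\in I^n} p_\mathbf{i} f_\mathbf{i} \mu$. In the homogeneous case $\lam_i \equiv \lam$ this is exactly the convolution
\[
\mu = \nu_n * S_{\lam^n}\mu, \qquad \nu_n = \sum_{\mathbf{i}\in I^n} p_\mathbf{i}\, \de_{t_\mathbf{i}},
\]
where $S_t$ denotes scaling by $t$ and $t_\mathbf{i} = f_\mathbf{i}(0)$. The inhomogeneous case follows by conditioning on the symbolic random walk; this introduces bookkeeping but no genuinely new idea.

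Now suppose for contradiction that $\hdim(\mu) = \alpha < \min(\sdim(\mu),1)$, and pick the matched scale $N \approx n \log_2(1/\lam)$. Then $H_N(\mu) \le (\alpha+o(1))N$ and $H_N(S_{\lam^n}\mu) = H_0(\mu)+ \alpha N + o(N)$, so passing from $S_{\lam^n}\mu$ to the convolution $\nu_n * S_{\lam^n}\mu = \mu$ produces essentially no entropy gain. This is exactly the setting for an \emph{inverse theorem for entropy of convolutions}: if $H_N(\nu*\eta) \le H_N(\eta) + o(N)$ and $\eta$ is sufficiently regular at each scale, then $\nu$ must be concentrated at a positive-density set of dyadic scales, in the sense that most of its mass lies on a small number of level-$k$ intervals for many $k\le N$. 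Applied with $\nu = \nu_n$ and $\eta = S_{\lam^n}\mu$, this forces $\nu_n$ to be concentrated at many scales between $1$ and $\lam^n$.

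The final step is to contradict this concentration using exponential separation. By hypothesis $\Gamma_n > \de^n$ for infinitely many $n$, so for those $n$ the atoms $\{t_\mathbf{i}\}_{\mathbf{i}\in I^n}$ of $\nu_n$ are pairwise $\de^n$-separated, and therefore $\nu_n$ attains the full combinatorial entropy $n\sum_i p_i\log(1/p_i)$ at scale $\de^n$; an elementary computation using the self-similar product structure of $\nu_n$ spreads this entropy quasi-uniformly across the dyadic scales between $1$ and $\de^n$, preventing concentration at any positive-density set of scales. For the contradiction with the inverse theorem output one needs $\sum_i p_i\log(1/p_i) > \alpha \sum_i p_i\log(1/\lam_i)$, i.e.\ $\alpha < \sdim(\mu)$, which is precisely our standing assumption; the $\min$ with $1$ enters because the inverse theorem requires $\alpha<1$ for $\eta$ to be less than full-dimensional. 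The dominant obstacle is the inverse theorem in Step 3: the surrounding arguments are multi-scale bookkeeping, whereas the inverse theorem is the deep content both of Hochman's original work and, in its $L^q$ variant, of \cite{Shmerkin19}.
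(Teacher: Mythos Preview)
The paper does not give an independent proof of Theorem~\ref{thm:Hochman}; it is stated as Hochman's result, and the paper recovers it from the stronger $L^q$ statement (Theorem~\ref{thm:main}) by letting $q\to 1^+$. Your proposal instead sketches Hochman's original entropy-based argument, which is a legitimate route. However, the sketch has a real gap in the way the inverse theorem is set up.

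The problematic line is $H_N(S_{\lam^n}\mu) = H_0(\mu)+ \alpha N + o(N)$. With the matched scale $2^{-N}\approx\lam^n$, the measure $S_{\lam^n}\mu$ is supported on an interval of length $\approx\lam^n\approx 2^{-N}$, so it is essentially a point mass at resolution $2^{-N}$ and $H_N(S_{\lam^n}\mu)=O(1)$, not $\alpha N$. Consequently, the statement ``convolving $S_{\lam^n}\mu$ with $\nu_n$ produces no entropy gain at scale $N$'' is vacuous, and the inverse theorem gives you nothing there. What you actually need (and what Hochman does, and what the paper does in its $L^q$ analogue, Proposition~\ref{prop:ssm-scale-Rm-norm}) is a comparison at a \emph{deeper} scale $RN$ for the $R$ coming from exponential separation: one shows that $H_{Rm}(\nu_n)/m \to \alpha$, and then exponential separation forces $H_{Rm}(\nu_n)=nH(\Delta)$ for infinitely many $n$, yielding $\alpha=\sdim(\mu)$.

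Your final paragraph also misplaces the mechanism of contradiction. The argument does not work by showing ``$\nu_n$ spreads entropy quasi-uniformly across scales''; rather, the inverse theorem is combined with the fact that $\mu$ itself has \emph{uniform entropy dimension} (its local scale-entropy is $\approx\alpha<1$ at almost every scale and location), which forces $\nu_n$ to look atomic at most scales between $2^{-m}$ and $2^{-Rm}$, bounding $H_{Rm}(\nu_n)$ from above by roughly $\alpha m$. The paper's $L^q$ substitute for uniform entropy dimension is the multifractal regularity developed in \S\ref{subsec:multifractal}. So the skeleton of your plan is right, but the single-scale setup in your third paragraph cannot carry the argument; you need the two-scale ($m$ versus $Rm$) structure and the regularity of $\mu$ across scales.
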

Besides conceptually getting us closer to the overlaps conjecture, this theorem has some striking implications: it can be checked in many new explicit cases, and it can be shown to hold outside of very small exceptional sets of parameters in parametrized families of self-similar measures satisfying minimal regularity and non-degeneracy assumptions. Hochman's Theorem (and its proof) has also underpinned much of the more recent progress in the area - we will come back to all these points in Section \ref{sec:applications}.

\subsection{$L^q$ dimensions}

Hochman's Theorem is about the Hausdorff dimension of self-similar measures. In fractal geometry, and in particular in multifractal analysis, there is a myriad of other ways of quantifying the size of a (potentially fractal) measure. Of particular relevance is a one-dimensional family of numbers known as \emph{$L^q$ dimensions}, which we now define.

We introduce some further notation for simplicity. Let $\mathcal{P}$ denote the family of boundedly supported Borel probability measures on $\R$. The class of $\mu\in\mathcal{P}$ supported on $[0,1)$ is denoted by $\mathcal{P}_1$. Given $m\in\N$, we let $\mathcal{D}_m$ denote the family of half-open dyadic intervals of side-length $2^{-m}$, that is,
\[
\mathcal{D}_m = \{ [j 2^{-m}, (j+1) 2^{-m}) : j\in\Z\}.
\]
Given $\mu\in\mathcal{P}$ and $q>1$, the quantity $S_m(\mu,q)=\sum_{J\in\mathcal{D}_m} \mu(J)^q$ measures, in an $L^q$-sense, how uniformly distributed $\mu$ is at scale $2^{-m}$. Using H\"{o}lder's inequality, one can check that if $\mu\in\mathcal{P}_1$, then
\begin{equation}  \label{eq:inequalities-moment-sums}
2^{(1-q)m} \le   S_m(\mu,q) \le 1,
\end{equation}
with the extreme values attained, respectively, when $\mu$ is uniformly distributed among the $2^m$ intervals $J\in\mathcal{D}_m$ contained in $[0,1)$, and when $\mu$ gives full mass to a single interval in $\mathcal{D}_m$. This suggests that the decay rate of $S_m(\mu,q)$ as $m\to\infty$ may indicate the smoothness of $\mu$ at arbitrarily small scales, and this is precisely how the $L^q$ dimensions $D_\mu(q)$ are defined:
\begin{align*}
\tau_\mu(q) &=  \liminf_{m\to\infty} \frac{-\log S_m(\mu,q)}{m},\\
D_\mu(q) &= \frac{\tau_\mu(q)}{q-1}.
\end{align*}
(Here and throughout the paper, the logarithms are to base $2$.) We will sometimes write $\tau(\mu,q)$, $D(\mu,q)$ instead of $\tau_\mu(q)$, $D_\mu(q)$. The function $q\mapsto\tau_\mu(q)$ is called the \emph{$L^q$ spectrum} of $\mu$.   In light of \eqref{eq:inequalities-moment-sums}, one always has $0\le D_\mu(q)\le 1$ for $\mu\in\mathcal{P}_1$ and, indeed, the same inequality holds for $\mu\in\mathcal{P}$. Moreover, $D_\mu(q)=0$ for purely atomic measures $\mu$ and $D_\mu(q)=1$ if $\mu$ is Lebesgue measure on an interval or, more generally, if $\mu$ is absolutely continuous with an $L^q$ density. These basic properties suggest that $D_\mu(q)$ is a reasonable notion of dimension.

We state two simple and well known properties of $L^q$ dimensions.
\begin{lemma}
The functions $q\mapsto D_\mu(q)$, $q\mapsto \tau_\mu(q)$ are respectively  non-increasing and concave on $(1,\infty)$.
\end{lemma}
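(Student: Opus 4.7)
The plan is to establish both properties at each finite scale $m$ and then pass to the $\liminf$ carefully. The crucial input is the convexity of $q \mapsto \log S_m(\mu, q)$ on $[1, \infty)$, together with the boundary value $\log S_m(\mu, 1) = \log \sum_{J \in \mathcal{D}_m} \mu(J) = 0$. The convexity follows from H\"older's inequality: if $q = \theta q_1 + (1-\theta) q_2$ with $q_1, q_2 \ge 1$ and $\theta \in [0, 1]$, writing $\mu(J)^q = \mu(J)^{\theta q_1}\mu(J)^{(1-\theta) q_2}$ and applying H\"older with conjugate exponents $1/\theta$ and $1/(1-\theta)$ yields
\[
S_m(\mu, q) \le S_m(\mu, q_1)^\theta\, S_m(\mu, q_2)^{1-\theta}.
\]

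For the concavity of $\tau_\mu$, the above inequality says that $h_m(q) := -\log S_m(\mu, q)/m$ is concave in $q$. To transfer this to the $\liminf$, I would fix $q_1, q_2 > 1$ and $\theta \in [0, 1]$, set $q = \theta q_1 + (1-\theta) q_2$, and choose a subsequence $m_k$ along which $h_{m_k}(q) \to \tau_\mu(q)$. From $h_{m_k}(q) \ge \theta\, h_{m_k}(q_1) + (1-\theta)\, h_{m_k}(q_2)$, passing to $\liminf_k$ and using superadditivity together with $\liminf_k h_{m_k}(q_i) \ge \tau_\mu(q_i)$ would give $\tau_\mu(q) \ge \theta \tau_\mu(q_1) + (1-\theta) \tau_\mu(q_2)$.

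For the monotonicity of $D_\mu$, since $f_m(q) := \log S_m(\mu, q)$ is convex on $[1, \infty)$ with $f_m(1) = 0$, the secant slope $f_m(q)/(q-1)$ from the point $(1, 0)$ is non-decreasing in $q > 1$; hence $-f_m(q)/[m(q-1)]$ is non-increasing in $q$. This pointwise (in $q$) inequality between the scale-$m$ functions is preserved under $\liminf_m$, so $D_\mu(q_1) \ge D_\mu(q_2)$ whenever $1 < q_1 \le q_2$.

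The only delicate step is passing concavity through the $\liminf$, since concavity is not in general preserved under arbitrary pointwise suprema; this is why the subsequence argument in the second paragraph is required. Monotonicity, by contrast, is automatic because non-strict inequalities between sequences are preserved by $\liminf$.
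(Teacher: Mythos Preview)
Your proof is correct and follows the same route as the paper: use H\"older to obtain
\[
S_m(\mu,\theta q_1+(1-\theta)q_2)\le S_m(\mu,q_1)^{\theta}S_m(\mu,q_2)^{1-\theta},
\]
deduce concavity of each $h_m(q)=-\tfrac{1}{m}\log S_m(\mu,q)$, and for monotonicity specialise to $q_2=1$ (your secant-slope formulation is exactly the paper's choice $q_1=q$, $q_2=1$, $\theta=(p-1)/(q-1)$).

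One remark: the step you flag as ``delicate'' is in fact automatic. Writing $\liminf_m h_m=\sup_N\inf_{m\ge N}h_m$, each $\inf_{m\ge N}h_m$ is concave (infimum of concave functions), and the pointwise limit of the resulting non-decreasing sequence of concave functions is again concave. So the paper's ``immediate'' is justified, and your subsequence argument, while perfectly valid, is not needed.
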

\begin{proof}
Fix $0<\lam<1$, $\mu\in\mathcal{P}$, $m\in\N$, $q_1,q_2\ge 1$. It follows from H\"{o}lder's inequality applied with exponents $1/\lam$ and $1/(1-\lam)$ that
\[
S_m(\mu,\lam q_1+(1-\lam)q_2) \le S_m(\mu,q_1)^\lam \, S_m(\mu,q_2)^{1-\lam}.
\]
The concavity of $\tau$ is immediate from this. For the monotonicity of $D_\mu(q)$, suppose $1<p<q$ and apply the above with $q_1=q,q_2=1$ and $\lam=(p-1)/(q-1)$.
\end{proof}

So far we have dealt with a general measure $\mu\in\mathcal{P}$. We now turn to self-similar measures associated to a WIFS $(f_i,p_i)$. We have seen that the similarity dimension is a ``candidate'' for the Hausdorff dimension of a self-similar measure $\mu$, is always an upper bound for $\hdim(\mu)$ and is conjectured to equal $\hdim(\mu)$ under the terms of the overlaps conjecture. There is a natural $L^q$ analog of the similarity dimension: first, we define $T(\mu,q)$ as the only number satisfying
\[
\sum_{i\in I} p_i^q \lambda_i^{-T(\mu,q)} = 1,
\]
and then let $\sdim(\mu,q)=T(\mu,q)/(q-1)$. The function $T$ is a ``symbolic'' analog of the $L^q$ spectrum, while $\sdim$ is a version of similarity dimension for $L^q$ dimensions.

A simple exercise shows that $\lim_{q\to 1+} \sdim(\mu,q)=\sdim(\mu)$. We also have that $q\mapsto \sdim(\mu,q)$ is a real-analytic, nondecreasing function of $q$; it is constant if and only if $p_i =\lambda_i^s$ for some $s$ independent of $i$ (in which case $\sdim(\mu,q)=s$ for all $q>1$), and otherwise it is strictly decreasing.

Just like for Hausdorff dimension, it always holds that $D_\mu(q)\le \min(\sdim(\mu,q),1)$, and the only known mechanisms for a strict inequality are $\sdim(\mu,q)>1$ and the presence of exact overlaps. A variant of the overlaps conjecture asserts that if $\mu$ is a self-similar measure then $D_\mu(q)=\min(\sdim(\mu,q),1)$ unless there is an exact overlap. This conjecture is stronger than the Hausdorff dimension variant, since $D_\mu(q)\le \hdim(\mu)$ for all $q>1$ and $D_\mu(q)\to \hdim(\mu)$ as $q\to 1^+$ in the case of self-similar measures: see \cite[Theorem 5.1 and Remark 5.2]{ShmerkinSolomyak16}.

In \cite{Shmerkin19}, the author established the following variant of Hochman's Theorem \ref{thm:Hochman} for $L^q$ dimensions:
\begin{theorem} \label{thm:main}
If $(f_i,p_i)$ is a WIFS with exponential separation and $\mu$ is the associated invariant self-similar measure, then
\[
D_\mu(q) = \min(\sdim(\mu,q),1) \quad\text{for all } q>1.
\]
\end{theorem}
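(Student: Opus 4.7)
The plan is to prove the two inequalities $D_\mu(q)\le\min(\sdim(\mu,q),1)$ and $D_\mu(q)\ge\min(\sdim(\mu,q),1)$ separately, with the main work devoted to the latter.

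For the upper bound, $D_\mu(q)\le 1$ is immediate from \eqref{eq:inequalities-moment-sums}. For $D_\mu(q)\le\sdim(\mu,q)$, applying the elementary inequality $\sum_i a_i^q\le (\sum_i a_i)^q$ (valid for $a_i\ge 0$ and $q\ge 1$) to the self-similarity relation $\mu(J)=\sum_i p_i\,\mu(f_i^{-1}J)$ and summing over $J\in\mathcal{D}_m$ yields
\[
\sum_i p_i^q\,S_m(f_i\mu,q)\le S_m(\mu,q).
\]
Since $S_m(f_i\mu,q)$ is comparable, up to a bounded factor, to $S_{m+\log\lambda_i}(\mu,q)$, passing to the $\liminf$ and using the defining identity $\sum_i p_i^q\lambda_i^{-T(\mu,q)}=1$ forces $\tau_\mu(q)\le T(\mu,q)$, hence $D_\mu(q)\le\sdim(\mu,q)$. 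This direction requires no separation assumption.

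The lower bound is the substantial direction, and the strategy is to argue by contradiction while exploiting the multi-scale convolution structure of $\mu$. Suppose $D_\mu(q)<\min(\sdim(\mu,q),1)-\eta$ for some $\eta>0$. In the equicontractive case $\lambda_i\equiv\lambda$, the self-similarity yields the exact identity
\[
\mu=\mu^{(n)}\ast\widetilde{\mu}^{(n)},
\]
where $\mu^{(n)}=\sum_{|i|=n} p_i\,\delta_{f_i(0)}$ is the atomic ``generation-$n$'' measure and $\widetilde{\mu}^{(n)}$ is a rescaled copy of $\mu$ of diameter $\lambda^n$. Iterating, $\mu$ becomes an infinite convolution of rescaled copies of $\mu^{(n)}$ at scales separated by a factor $\lambda^n$. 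In the general non-equicontractive case, a pigeonhole argument grouping $n$-words by their approximate contraction ratio yields an analogous (albeit approximate) multi-scale convolution picture.

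The decisive input is the assumed inverse theorem for $L^q$ norms of convolutions. Informally, it asserts that if $\nu_1\ast\nu_2$ has $L^q$-norm at scale $2^{-m}$ not appreciably smaller than the $L^q$-norm of $\nu_1$ at that scale, then at a substantial proportion of intermediate scales $\ell\in[0,m]$ either $\nu_1$ is close to a normalized indicator of a set at scale $2^{-\ell}$, or $\nu_2$ is close to an atomic measure at scale $2^{-\ell}$. Applied to the multi-scale decomposition of $\mu$, the hypothesis $D_\mu(q)<\min(\sdim(\mu,q),1)-\eta$ forces, for infinitely many $n$, some such structural alternative to hold for $\mu^{(n)}$ at some intermediate scale. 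Exponential separation now delivers the needed quantitative control: the atoms of $\mu^{(n)}$ are $\delta^n$-separated, so $\mu^{(n)}$ has its ``symbolic'' $L^q$-spectrum at all scales $2^{-m}\le\delta^n$ and can be neither atomic at larger scales (it has $|I|^n$ well-separated atoms) nor concentrated on a small uniform-like set (since that would force the symbolic dimension to be small). Both alternatives of the inverse theorem are thus ruled out, producing the desired contradiction.

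The principal obstacle is twofold. First, in the non-equicontractive case the multi-scale convolution decomposition is not exact; obtaining a usable convolution identity rather than merely an average of convolutions requires a careful pigeonhole/renormalization argument to identify typical scales at which the self-similar structure behaves like a convolution of a well-separated atomic measure with a fine piece. Second, and more subtly, the inverse theorem only produces structure at an unspecified subset of scales, so a further pigeonhole is needed to align that ``structure scale'' with a scale at which exponential separation provides incompatible quantitative information about $\mu^{(n)}$. The interaction between the symbolic combinatorial information coming from the WIFS and the additive-combinatorial information produced by the inverse theorem is the heart of the argument and its main technical challenge.
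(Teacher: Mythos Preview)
Your upper-bound sketch is fine. The lower-bound proposal, however, misidentifies both where the contradiction comes from and what tools are needed to produce it.

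You plan to apply the inverse theorem to $\mu=\mu_n * S_{\lambda^n}\mu$ and then rule out both structural alternatives for $\mu_n$ using exponential separation. This cannot work as stated: $\mu_n$ \emph{is} purely atomic, so the ``atomic-at-many-scales'' alternative for the $B$-side of Theorem~\ref{thm:inverse-thm} is perfectly compatible with its structure. Exponential separation only controls $\mu_n$ at scales $\le\delta^n\approx\lambda^{Rn}$, whereas the convolution identity lives at scale $\lambda^n$; in the range of scales between $\lambda^n$ and $\lambda^{Rn}$ the atoms of $\mu_n$ may collapse into a single $2^D$-ary branch, and nothing you wrote excludes that. So neither alternative is ruled out by the separation hypothesis alone.

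The actual contradiction in the paper comes from the \emph{other} side of the inverse theorem---the regular set $A\subset\supp\mu^{(m)}$---and it requires an ingredient entirely absent from your outline: the multifractal regularity of $\mu$ at points $q$ where $\tau_\mu$ is differentiable. Writing $\alpha=\tau'(q)$, one shows (Lemmas~\ref{lem:size-set-A-in-terms-of-f-alpha}--\ref{lem:Lq-sum-over-small-set} and Proposition~\ref{prop:Lq-over-small-set-is-small}) that any set capturing most of $\|\mu^{(m)}\|_q^q$ has size $\approx 2^{\tau^*(\alpha)m}$ and, crucially, that it must branch at a nearly constant rate at essentially every $2^D$-adic scale. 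On the other hand, condition~\ref{inverse6} of the inverse theorem together with the hypothesis that the convolving measure has small $L^q$ norm forces $A$ to have \emph{nearly full} branching on a positive proportion of scales. Since $\tau^*(\alpha)<1$ when $\tau(q)<q-1$ (Lemma~\ref{lem:f-alpha-smaller-than-one}), these two facts are incompatible. This is Theorem~\ref{thm:conv-with-ssm-flattens}, and it is where the real work lies. Once that smoothing theorem is available, the passage from scale $\lambda^n$ to scale $\lambda^{Rn}$ (Proposition~\ref{prop:ssm-scale-Rm-norm}) is not a pigeonhole alignment but an iteration of the smoothing estimate over the localized pieces $\rho_J$ of $\mu_n$; only at the very end does exponential separation enter, to identify $\|\mu_n^{(Rm)}\|_q^q$ with the symbolic quantity $\|\Delta\|_q^{qn}$.
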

Again, this theorem is formally stronger than Theorem \ref{thm:Hochman}, since the latter can be recovered by letting $q\to 1^+$. While at first it may seem that the difference between Hausdorff and $L^q$ dimensions is merely technical, the $L^q$ dimension version has several advantages in applications, especially since it applies to \emph{every} $q>1$. It is useful to think of the difference between $L^q$ and Hausdorff dimensions as being similar to the difference between $L^q$ and $L^1$ functions (on bounded intervals). Both kind of dimensions give information about the local behaviour of a measure, but the $L^q$ dimensions do so in a more quantitative fashion. If $\hdim(\mu)>s$, then it holds that
\begin{equation} \label{eq:Frostman}
\mu(B(x,r)) \le r^s
\end{equation}
for $\mu$-almost all $x$ and all sufficiently small $r$ (depending on $x$). On the other hand, if $\lim_{q\to\infty} D_\mu(q)>s$, then \eqref{eq:Frostman} holds \emph{uniformly, for all $x$} and all sufficiently small $r$: see Lemma \ref{lem:Lq-dim-to-Frostman-exp} below. For some applications of Theorem \ref{thm:main} beyond those described in this article, see \cite{BRS19, FraserJordan17, RossiShmerkin18}.

Theorem \ref{thm:main} was originally featured in \cite[Theorem 6.6]{Shmerkin19}. In this article we will present the proof of the special case in which the WIFS is \emph{homogeneous}, that is, all of the scaling factors $\lambda_i$ are equal. The homogeneous case of Theorem \ref{thm:main} is a particular case of \cite[Theorem 1.1]{Shmerkin19}. As indicated earlier, this particular case avoids an ergodic-theoretic part of the argument, and so we hope it will be more accessible. The proof borrows many ideas from Hochman's proof of Theorem \ref{thm:Hochman}, but there are also substantial differences.

One central element of the proof of Theorem \ref{thm:main} is an inverse theorem for the $L^q$ norm of convolutions, which does not rely on self-similarity and may have other applications. This theorem is discussed and stated (without proof) in Section \ref{sec:inverse-theorem}. Section \ref{sec:proof} contains the proof of the homogeneous version of Theorem \ref{thm:main}, starting with a sketch and proceeding to the details. In Section \ref{sec:applications} we introduce some applications to Frostman exponents, self-similar measures generated by algebraic parameters, absolute continuity and intersections of self-similar Cantor sets. We also briefly discuss some old and new results by other authors on Bernoulli convolutions, how they relate to ours, and a possible line for future research.

Although some of the applications in Section \ref{sec:applications} have not been stated in this form in \cite{Shmerkin14}, both the results and the presentation of this survey are strongly based on \cite{Shmerkin14}.

\section{An inverse theorem for the $L^q$ norms of convolutions}

\label{sec:inverse-theorem}

Let $\mu,\nu\in \mathcal{P}$. The convolution $\mu*\nu$ is defined as the push-forward of the product measure $\mu\times\nu$ under the addition map $(x,y)\mapsto x+y$. Explicitly,
\[
\mu*\nu(A) = (\mu\times\nu)\{(x,y):x+y\in A\}  \quad\text{for all Borel }A\subset\R.
\]
Intuitively, one expects the convolution $\mu*\nu$ to be at least as smooth as $\mu$. A natural question is then: if $\mu*\nu$ is not ``much smoother'' than $\mu$, can we deduce any structural information about the measures $\mu$ and $\nu$? Of course, this depends on the notion of smoothness under consideration, and on the precise meaning of ``much smoother''.

Here we will measure smoothness by the moment sums $S_m(\mu,q)$ (with $q>1$ fixed, and $m$ also fixed but very large). Nevertheless, we begin by discussing the situation for entropy. Let $\mu\in\mathcal{P}_1$. Its normalized level $m$ entropy is
\[
H_m(\mu) = \frac{1}{m}\sum_{J\in\mathcal{D}_m} -\mu(J)\log(\mu(J)),
\]
with the usual convention $0\log 0=0$.  In \cite[Theorem 2.7]{Hochman14}, Hochman showed that if $\mu,\nu\in\mathcal{P}_1$ satisfy
\[
H_m(\nu*\mu) \le H_m(\mu)+\e,
\]
where $\e>0$ is small, then $\nu$ and $\mu$ have a certain structure which, very roughly, is of this form: the set of dyadic scales $0\le s<m$ can be split into three sets $\mathcal{A}\cup \mathcal{B}\cup \mathcal{C}$. At scales in $\mathcal{A}$, the measure $\nu$ looks ``roughly atomic'', at scales in $\mathcal{B}$ the measure $\mu$ looks ``roughly uniform'', and the set $\mathcal{C}$ is small. This theorem was motivated in part by its applications to the dimension theory of self-similar measures, as discussed above.

We aim to state a result in the same spirit, but with $L^q$ norms in place of entropy. Given $m\in\N$, we will say that $\mu$ is a $2^{-m}$-measure if $\mu$ is a probability measure supported on $2^{-m}\Z\cap [0,1)$. Given $\mu\in\mathcal{P}_1$, we denote by $\mu^{(m)}$ the associated $2^{-m}$-measure, given by $\mu^{(m)}(j 2^{-m}) = \mu([j 2^{-m},(j+1)2^{-m}))$. Given a purely atomic measure $\rho$ we define the $L^q$ norms
\[
\|\rho\|_q = \left(\sum \rho(y)^q\right)^{1/q},
\]
for $q\in (1,\infty)$ and also set $\|\rho\|_\infty= \max_y \rho(y)$. With these definitions, we clearly have
\[
S_m(\mu,q) = \|\mu^{(m)}\|_q^q.
\]

By the convexity of $t\mapsto t^q$, we have that $\|\mu*\nu\|_q\le \|\mu\|_q\|\nu\|_1$, for any $q\ge 1$ and any two finitely supported probability measures $\mu,\nu$ (this is a simple instance of Young's convolution inequality). We aim to understand under what circumstances $\|\mu*\nu\|_q \approx \|\mu\|_q\|\nu\|_1$, where the closeness is in a weak, exponential sense.  More precisely, we are interested in what structural properties of two $2^{-m}$-measures $\mu,\nu$ ensure an exponential flattening of the $L^q$ norm of the form
\begin{equation} \label{eq:intro-L2-norm-flattens}
\|\mu*\nu\|_q \le 2^{-\e m} \|\mu\|_q.
\end{equation}
(Recall that, by definition, $2^{-m}$-measures are probability measures, so that $\|\nu\|_1=1$.) One particular instance of this problem has received considerable attention. Given a finite set $A$, we denote $\mathbf{1}_A=\sum_{x\in A} \delta_x$.  Then $\|\mathbf{1}_A*\mathbf{1}_A\|_2^2$ is the additive energy of $A$, a quantity of great importance in combinatorics and its applications. In particular, estimates of the form
\[
\|\mathbf{1}_A*\mathbf{1}_A\|_2^2 \le |A|^{-\e} \|\mathbf{1}_A\|_2^2 \|\mathbf{1}_A\|_1 =  |A|^{3-\e}
\]
arise repeatedly in dynamics, combinatorics and analysis: see e.g. \cite{DyatlovZahl16, ALL17} for some recent examples.

To motivate the inverse theorem, we discuss cases in which $\|\mu *\nu\|_q \approx \|\mu\|_q$ for $2^{-m}$-measures $\mu$ and $\nu$, where we are deliberately vague about the exact meaning of $\approx$. If $\nu=\delta_{k 2^{-m}}$, then $\mu*\nu$ is just a translation of $\mu$ and so we have an exact equality. If $\nu$ is supported on a small number of atoms (say subexponential in $m$), then we still have $\|\mu*\nu\|_q \approx \|\mu\|_q$. Reciprocally, if $\lam$ denotes the uniform $2^{-m}$-measure giving mass $2^{-m}$ to each atom $j 2^{-m}$,  then we also have $\|\lam*\nu\|_q \approx \|\lam\|_q$. The same holds if $\lam$ is replaced by a suitably small perturbation.

Furthermore, if $\nu=2^{-\e m}\delta_0 + (1- 2^{\e m})\lambda$ and $\mu$ is an arbitrary $2^{-m}$-measure, then we still have $\|\mu*\nu\|_q \ge 2^{-\e m}\|\mu\|_q$. This shows that a subset of measure $2^{-\e m}$ is able to prevent exponential smoothening, so that in order to guarantee \eqref{eq:intro-L2-norm-flattens} we need to impose conditions on the structure of the measures inside sets of exponentially small measure. This is one significant difference with the case of entropy, since sets of exponentially small measure have negligible contribution to the entropy.

A naive conjecture might be that if \eqref{eq:intro-L2-norm-flattens} fails for a pair of $2^{-m}$-measures, then either $\mu$ is close to uniform, or $\nu$ gives ``large'' mass to an exponentially small set of atoms. However, there other situations in which $\|\mu*\nu\|_q\approx \|\mu\|_q$. Let $D\gg 1$ be a large integer and fix $\ell\gg D$. Given a subset $\mathcal{S}$ of $\{0,\ldots,\ell-1\}$, let $\widetilde{\mu}$ be the distribution of an independent sequence of random variables $(X_1,\ldots, X_\ell)$ such that $X_s$ is uniformly distributed in $\{0,1,\ldots,2^D-1\}$ if $s\in\mathcal{S}$ and $X_s=0$ if $s\notin\mathcal{S}$. Finally, let $\mu$ be the push-forward of $\widetilde{\mu}$ under the $2^D$-ary expansion map. In other words, $\widetilde{\mu}$ is the $2^{-D\ell}$-measure such that
\[
\mu\left(\sum_{s=1}^\ell X_s 2^{-Ds}\right) = \widetilde{\mu}(X_1,\ldots,X_s).
\]
It is convenient to think about the structure of $\mu$ in terms of trees. Given a base $2^D$ and a nonempty closed subset $A$ of $[0,1)$, we may associate to $A$ the family of all intervals of the form $[j 2^{-Ds},(j+1) 2^{-Ds})$ (i.e. the $2^D$-ary intervals) that intersect $A$. This family has a natural tree structure, where the interval $[0,1)$ is the root and descendence is given by inclusion. In general, the tree associated to $A$ is infinite, but in the case of $2^{-D\ell}$-sets we can think of a finite tree with $\ell$ levels. For the measure $\mu$ just defined, its support $A$ has the following structure: vertices of level $s\in\mathcal{S}$ have a maximal number of offspring $2^D$ (``full branching'') while vertices of level $s\notin\mathcal{S}$ have a single offspring (``no branching''), corresponding to the leftmost interval. Moreover, $\mu$ is the uniform measure on $A$ - it gives all points in $A$ the same mass $1/|A|$.

The convolution $\mu*\mu$ has essentially the same structure, except that vertices of level $s\notin\mathcal{S}$ such that $s-1\in\mathcal{S}$ have two offspring - due to the carries of the previous level. Using this structure, it is not hard to check that $\|\mu*\mu \|_q \approx \|\mu\|_q$ for all $q$.  In similar ways one can construct $2^{-m}$-measures $\mu,\nu$ supported on sets of widely different sizes, such that $\|\mu*\nu\|_q\approx \|\mu\|_q$.

The inverse theorem asserts that if \eqref{eq:intro-L2-norm-flattens} fails to hold then one can find subsets $A\subset \supp(\mu)$ and $B\subset\supp(\nu)$, such that $A$ captures a ``large'' proportion of the $L^q$ norm of $\mu$ and $B$ a ``large'' proportion of the mass of $\nu$, and moreover $\mu|_A,\nu|_B$ are fairly regular (they are constant up to a factor of $2$). The main conclusion, however, is that $A$ and $B$ have a structure resembling the example above, and also the conclusion of Hochman's inverse theorem for entropy: if $D$ is a large enough integer, then for each $s$, either $B$ has no branching between scales $2^{-sD}$ and $2^{-(s+1)D}$ (in other words, once the first $s$ digits in the $2^D$-ary expansion of $y\in B$ are fixed, the next digit is uniquely determined), or $A$ has nearly full branching between scales $2^{-sD}$ and $2^{-(s+1)D}$ (whatever the first $s$ digits of $x\in A$ in the $2^D$-adic expansion, the next digit can take ``most'' values). To formalize this, we introduce the following definition:

\begin{definition}
Given $D\in\N, \ell\in\N$ and a sequence $R=(R_0,\ldots,R_{\ell-1})\in [1,2^D]^\ell$,  we say that a set $A\subset [0,1)$ is \emph{$(D,\ell,R)$-regular} if it is a $2^{-\ell D}$-set,  and for all $s\in \{0,\ldots,\ell-1\}$ and for all $J\in\mathcal{D}_{sD}$ such that $A\cap J\neq\varnothing$, it holds that
\[
| \{ J'\in\mathcal{D}_{(s+1)D} : J'\subset J, J'\cap A \neq\varnothing \} | = R_s.
\]
\end{definition}
In terms of the associated $2^D$-ary tree, $A$ is $(D,\ell,R)$-regular if every vertex of level $s$ has the same number of offspring $R_s$.

Before stating the theorem, we summarize our notation for dyadic intervals  (some of it has already been introduced):
\begin{itemize}
\item $\mathcal{D}_s$ is the family of dyadic intervals $[j 2^{-s},(j+1) 2^{-s})$.
\item Given a set $A\subset\R$, we write $\mathcal{D}_s(A)$ for the family of intervals in $\mathcal{D}_s$ that hit $A$.
\item Given $x\in\R$, we write $\mathcal{D}_s(x)$ for the only interval in $\mathcal{D}_s$ that contains $x$.
\item We write $a J$ for the interval of the same center as $J$ and length $a$ times the length of $J$.
\end{itemize}
We also write $[\ell]=\{0,1,\ldots,\ell-1\}$.

\begin{theorem}
\label{thm:inverse-thm}
For each $q>1$, $\delta>0$, and $D_0\in\N$, there are $D \geq D_0$ and $\eps > 0$, so that the following holds for $\ell\ge \ell_0(q,\delta,D_0)$.

Let $m=\ell D$ and let $\mu$ and $\nu$ be $2^{-m}$-measures with
\[
 \|\mu \ast \nu\|_q \geq 2^{-\eps m} \|\mu\|_q.
\]
Then there exist $2^{-m}$-sets $A\subset \supp\mu$ and $B\subset\supp\nu$, numbers $k_A, k_B\in 2^{-m}\Z$, and a set $\mathcal{S}\subset[\ell]$, so that
\begin{labeledlist}{A}
 \item\label{A1} $\| \mu|_A \|_q \geq 2^{-\delta m}\|\mu\|_q$.
 \item\label{A2} $\mu(x) \leq 2 \mu(y)$ for all $x,y\in A$.
 \item\label{A3} $A'=A+k_A$ is contained in $[0,1)$ and is $(D,\ell,R')$ uniform for some sequence $R'$.
 \item\label{A4} $x\in \frac{1}{2}\mathcal{D}_{sD}(x)$ for each $x\in A'$ and $s\in [\ell]$.
\end{labeledlist}
\begin{labeledlist}{B}
 \item\label{B1} $\| \nu|_B \|_1 = \nu(B) \geq 2^{-\delta m}$.
 \item\label{B2} $\nu^{(m)}(x) \leq 2 \nu(y)$ for all $x,y\in B$.
 \item\label{B3} $B'=B+k_B$ is contained in $[0,1)$ and is $(D,\ell,R'')$ uniform for some sequence $R''$.
 \item\label{B4} $y\in \frac{1}{2}\mathcal{D}_{sD}(y)$ for each $y\in B'$ and $s\in [\ell]$.
\end{labeledlist}
Moreover
\begin{labeledlist}{}
\setcounter{enumi}{4}
 \item\label{inverse5} for each $s$, $R''_s=1$ if $s\not\in\mathcal{S}$, and $R'_s \geq 2^{(1-\delta)D}$ if $s\in\mathcal{S}$.
 \item\label{inverse6} The set $\mathcal{S}$ satisfies
 \[
  \log \|\nu\|^{-q'}_q - m\delta \leq D|\mathcal{S}| \leq \log \|\mu\|^{-q'}_q + m\delta.
 \]
\end{labeledlist}
\end{theorem}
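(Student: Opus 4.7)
My overall strategy would parallel Hochman's inverse theorem for entropy \cite{Hochman14}, but with the crucial twist that, as the $\nu = 2^{-\eps m}\delta_0 + (1-2^{-\eps m})\lam$ example already flagged in the text shows, $L^q$-norms are sensitive to exponentially thin subsets of the support. Consequently, the conclusion must produce subsets $A\subset \supp\mu$ and $B\subset \supp\nu$ from the outset, and my plan would proceed in three stages: first regularize the measures to be near-constant on selected subsets, then regularize the $2^D$-ary branching structure, and finally extract the scale-by-scale dichotomy (5) from the convolution hypothesis via an additive-combinatorial flattening lemma.

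For the first stage I would dyadically bin the atoms of $\mu$ by mass, $A_k = \{x : \mu(x) \in [2^{-k-1}, 2^{-k})\}$ for $1 \le k \le O(m)$, and pigeonhole on the $q$th-moment contribution to select a bin $A_0$ with $\|\mu|_{A_0}\|_q \ge (Cm)^{-1/q} \|\mu\|_q \ge 2^{-\delta m/3}\|\mu\|_q$ (for $m$ large); all masses in $A_0$ then agree up to a factor of $2$, yielding provisional versions of (A1) and (A2). Pigeonholing on $L^1$-mass instead gives the analogous $B_0$ for $\nu$. A translation by suitable $k_A, k_B \in 2^{-m}\Z$ places the sets inside $[0,1)$, and a further pruning of the exponentially small fraction of points near dyadic mid-boundaries secures (A4), (B4).

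Next I would impose the regularity. For each scale $s \in [\ell]$ and each cell $J \in \mathcal{D}_{sD}$ hitting $A_0$, the set $A_0 \cap J$ has some branching count into $\mathcal{D}_{(s+1)D}$-cells lying in $\{1,\ldots,2^D\}$; after logarithmic binning there are only $O(D)$ possibilities per scale, hence $O(D)^\ell$ branching profiles in total. A scale-by-scale, cell-by-cell pruning extracts a $(D,\ell,R')$-regular subset $A \subset A_0$ retaining a fraction at least $(cD)^{-\ell} = 2^{-O(\ell \log D)}$ of the $q$th moment. This loss is absorbed into the final $2^{-\delta m}$ provided $D = D(\delta)$ is chosen large enough that $\log D \ll \delta D$. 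The same procedure yields a $(D,\ell,R'')$-regular $B \subset B_0$. At this point (A1)--(A4) and (B1)--(B4) are in hand.

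The main obstacle, and the technical heart of the argument, is (5). My plan is a single-scale flattening lemma: if at some scale $s$ one has both $R''_s \ge 2$ and $R'_s \le 2^{(1-\delta)D}$, then because the $R''_s$ distinct children of $B$ translate the children of $A$ within a fixed parent cell to distinct positions, a Pl\"unnecke--Ruzsa-type count forces the sumset $A+B$ to have strictly more than $R'_s$ children per cell at scale $s$. Since the regularized $\mu, \nu$ are near-uniform on $A, B$, this upgrades to an $L^q$ flattening by a factor $2^{-\eta D}$ at that scale, for some $\eta = \eta(\delta, q) > 0$. Summing across all ``bad'' scales and comparing with $\|\mu \ast \nu\|_q \ge 2^{-\eps m}\|\mu\|_q$, the number of bad scales is forced to be at most $O(\eps/\eta \cdot \ell)$; a final pigeonhole redefines $\mathcal{S}$ by removing this sparse bad set, after which (5) holds with parameter $\delta$ provided $\eps$ was chosen small enough relative to $\eta$ and $\delta$. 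Condition (6) then becomes bookkeeping: for measures constant up to a factor of $2$ on their supports, $\log\|\rho\|_q^{-q'} = \log|\supp\rho| + O(1)$, and the regular tree gives $\log|A| \ge (1-\delta) D|\mathcal{S}|$ and $\log|B| \le D|\mathcal{S}|$, from which the claimed two-sided bound on $D|\mathcal{S}|$ follows by comparing $\|\mu\|_q$ with $\|\mu|_A\|_q$ via (A1) and $\|\nu\|_q$ with $\|\nu|_B\|_q$ via (B1).
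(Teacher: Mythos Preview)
The paper does not contain a proof of this theorem; it is explicitly stated without proof (the abstract already says ``up to the proof of an inverse theorem for the $L^q$ norm of convolutions'', and the text refers to \cite{Shmerkin19} for the argument). So there is no paper proof to compare against. That said, your outline can still be assessed on its own merits.

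Your stages 1 and 2 (dyadic pigeonholing on mass to get (A1)--(A2), (B1)--(B2), then scale-by-scale pigeonholing on branching to get $(D,\ell,R)$-regularity) are standard and correct; the paper's own remark (b) after the theorem confirms that obtaining (A1)--(A4) and (B1)--(B4) ``is not hard'', and remark (c) confirms that (6) is bookkeeping once (5) is known. Your derivation of (6) is also fine.

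The gap is in stage 3. Your proposed single-scale flattening lemma is false as stated: the conditions $R''_s\ge 2$ and $R'_s\le 2^{(1-\delta)D}$ alone do \emph{not} force any growth in the sumset, let alone a factor $2^{\eta D}$. Take for instance the children of $A$ inside a fixed $\mathcal{D}_{sD}$-cell to form an arithmetic progression $\{0,1,\ldots,R'_s-1\}$ (in units of $2^{-(s+1)D}$) and the two children of $B$ to be $\{0,1\}$: then the local sumset has size $R'_s+1$, so the $L^q$ norm drops by at most a constant factor, not by $2^{-\eta D}$. Pl\"unnecke--Ruzsa controls iterated sumsets in terms of a doubling constant, but it gives you nothing here because you have not yet established any small-doubling hypothesis on either set at scale $s$. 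What you are really trying to prove is that \emph{many} scales of this ``bad'' type are incompatible with the global hypothesis $\|\mu*\nu\|_q\ge 2^{-\eps m}\|\mu\|_q$, and this is a genuinely multiscale statement: the obstruction at a single scale can be an arithmetic progression, but arithmetic progressions at \emph{every} scale force $A$ to be a generalized arithmetic progression of controlled rank, and then the convolution with $B$ does flatten.

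The actual proof in \cite{Shmerkin19} goes through an asymmetric Balog--Szemer\'edi--Gowers argument (to pass from large $\|\1_A*\1_B\|_2$ to small $|A'+B'|$ for large subsets) followed by a multiscale structure theorem for sets of small doubling due essentially to Bourgain, which is where the tree dichotomy in (5) ultimately comes from. Your outline is missing this structural input; a scale-by-scale sumset count cannot replace it.
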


Here, and throughout the paper, $q'=q/(q-1)$ denotes the dual exponent. We make some remarks on the statement.
\begin{enumerate}[label={\upshape\alph*)}]
\item In the original version of the theorem in \cite{Shmerkin19}, both the convolution and the translations take place on the circle $[0,1)$ with addition modulo $1$. See \cite[Theorem 2.2 and Remark 2.3]{RossiShmerkin18} for this formulation.
\item The main claim in the theorem is part (5). Obtaining sets $A,B$ satisfying (A1)--(A4) and (B1)--(B4) is not hard, and (6) is a straightforward calculation using (5).
\item The theorem fails for $q=1$ and $q=\infty$. In the first case there is an equality $\|\mu*\nu\|_1=\|\mu\|_1$ for any $2^{-m}$-measures, and in the second case there is always an equality $\|\1_A*\1_{-A}\|_\infty= \|\1_A\|_\infty\|\1_A\|_1$. On the other hand, the proof can easily be reduced to the case $q=2$, with the remaining cases following by interpolation with the endpoints $q=1$ and $q=\infty$.
\end{enumerate}

The proof of Theorem \ref{thm:inverse-thm} (including the proofs of the results it relies on) is elementary and, at least in principle, it is effective, although the value of $\e$ that emerges from the proof is extremely poor and certainly sub-optimal. However, for the purposes of proving Theorem \ref{thm:main}, the existence of any $\e>0$ is enough.

\section{Proof of the main theorem}
\label{sec:proof}

\subsection{Homogeneous self-similar measures}

We restate the particular case of Theorem \ref{thm:main} that we will prove.
\begin{theorem} \label{thm:main-homogeneous}
Let $(f_i(x)=\lambda x+t_i)_{i\in I}$ be a homogeneous IFS with exponential separation. Then for any probability vector $(p_i)_{i\in I}$, if $\mu$ is the invariant self-similar measure for the WIFS $(f_i,p_i)$, then
\begin{equation} \label{eq:main-thm}
D_\mu(q) = \min(\sdim(\mu,q),1) = \min\left(\frac{\log \sum_{i\in I} p_i^q}{(q-1)\log\lambda},1\right)
\end{equation}
for all $q>1$.
\end{theorem}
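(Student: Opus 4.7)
The upper bound $D_\mu(q)\le\min(\sdim(\mu,q),1)$ is routine and uses neither exponential separation nor Theorem~\ref{thm:inverse-thm}: the inequality $D_\mu(q)\le 1$ is immediate from \eqref{eq:inequalities-moment-sums}, while $D_\mu(q)\le \sdim(\mu,q)$ follows by writing $\mu=\sum_{\mathbf{i}\in I^n} p_{\mathbf{i}}\,f_{\mathbf{i}}\mu$, choosing $n$ with $\lambda^n\approx 2^{-m}$, and noting that each $f_{\mathbf{i}}\mu$ lives in an interval of length $\lambda^n\,\mathrm{diam}(\supp\mu)$; this gives $S_m(\mu,q)\lesssim \sum_{\mathbf{i}} p_{\mathbf{i}}^q=\big(\sum_i p_i^q\big)^n$ and hence $\tau_\mu(q)\ge T(\mu,q):=(q-1)\sdim(\mu,q)$. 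All the content is in the matching lower bound, which I would prove by contradiction.

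\textbf{Self-similarity identity and the contradiction hypothesis.} Suppose $\tau_\mu(q) < \min(T(\mu,q),q-1) - c$ for some $c>0$; then along an infinite sequence of scales $m\to\infty$, $S_m(\mu,q)\ge 2^{-m[\min(T(\mu,q),q-1)-c]}$. The key algebraic device is the homogeneous relation $\mu=\mu_n * S_{\lambda^n}\mu$, where $\mu_n:=\sum_{\mathbf{i}\in I^n} p_{\mathbf{i}}\,\delta_{t_{\mathbf{i}}}$ and $S_\rho$ denotes the scaling $x\mapsto \rho x$. Iterating (and using $S_\rho$-linearity of convolution),
\[
\mu = \mu_n * S_{\lambda^n}\mu_n * S_{\lambda^{2n}}\mu_n * \cdots * S_{\lambda^{(k-1)n}}\mu_n * S_{\lambda^{kn}}\mu.
\]
Select $n$ from the infinite set where exponential separation yields $\Gamma_n>\delta_0^n$, and choose $k$ so that $kn\log(1/\lambda)$ modestly exceeds $m$; then $S_{\lambda^{kn}}\mu$ is concentrated at a scale finer than $2^{-m}$ and can be ignored after discretization at level $m$.

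\textbf{Iterated application of the inverse theorem.} Form the partial convolutions $\eta_j := \mu_n * S_{\lambda^n}\mu_n * \cdots * S_{\lambda^{(j-1)n}}\mu_n$, so $\eta_k^{(m)}$ approximates $\mu^{(m)}$. By Young's inequality the sequence $\|\eta_j^{(m)}\|_q$ is nonincreasing, and the contradiction hypothesis forces the total decrease from $j=0$ to $j=k$ to be small on the exponential scale. Hence for the great majority of indices $j$, $\|\eta_{j+1}^{(m)}\|_q\ge 2^{-\eps m}\|\eta_j^{(m)}\|_q$, so Theorem~\ref{thm:inverse-thm} applies to the pair $(\eta_j^{(m)},\,(S_{\lambda^{jn}}\mu_n)^{(m)})$ and produces a set $\mathcal{S}_j\subset [\ell]$ of branching scales for the rescaled copy of $\mu_n$, together with a rigid regular-tree structure at the remaining scales; item (6) of the theorem bounds $D|\mathcal{S}_j|$ from above by $\log\|\eta_j^{(m)}\|_q^{-q'}+\delta m$.

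\textbf{Contradiction via exponential separation; main obstacle.} The condition $\Gamma_n>\delta_0^n$ forces the $|I|^n$ atoms of $\mu_n$ to be pairwise separated at scale $\delta_0^n$, so at any dyadic scale finer than $\delta_0^n$ the measure $\mu_n$ is genuinely spread out and its discrete $L^q$ norm is comparable to $\big(\sum_i p_i^q\big)^{n/q'}$. Translated to the regular-tree framework provided by Theorem~\ref{thm:inverse-thm}, this yields a matching \emph{lower} bound on the aggregate branching $\sum_{j=0}^{k-1}|\mathcal{S}_j|$ of order $kn\log(\sum_i p_i^q)/\log\lambda$, i.e.\ proportional to $\sdim(\mu,q)$ times the total symbolic length $kn$. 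But summing the inverse-theorem estimate over $j$ gives an \emph{upper} bound $D\sum_j|\mathcal{S}_j|\le \log\|\mu^{(m)}\|_q^{-q'}+O(k\delta m)$, which under the contradiction hypothesis is strictly smaller. The decisive (and most delicate) step is this alignment: one must translate $\Gamma_n>\delta_0^n$ into a quantitative lower bound on branching at the correct dyadic resolutions, and synchronize the parameters $(D,\ell,\eps,\delta)$ from Theorem~\ref{thm:inverse-thm} with $(n,k,m,\delta_0,c)$ so that the resulting telescoping inequalities close. This combinatorial-quantitative bookkeeping, rather than any single conceptual leap, is the main obstacle and explains why the $L^q$ formulation of flattening is needed in place of Hochman's entropy version.
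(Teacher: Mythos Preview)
Your telescoping step does not close. Item~\ref{inverse6} of Theorem~\ref{thm:inverse-thm} gives, for each $j$ where the inverse theorem applies, only $D|\mathcal{S}_j|\le \log\|\eta_j^{(m)}\|_q^{-q'}+m\delta$. Summing over $j=0,\dots,k-1$ produces $\sum_j\log\|\eta_j^{(m)}\|_q^{-q'}+km\delta$, and this first sum is of order $k\cdot m\,D_\mu(q)$, not $m\,D_\mu(q)$: by Young's inequality $\|\eta_j^{(m)}\|_q\ge\|\eta_k^{(m)}\|_q\approx\|\mu^{(m)}\|_q$, so every single term is already $\approx\log\|\mu^{(m)}\|_q^{-q'}$. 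Your lower bound $\sum_j D|\mathcal{S}_j|\gtrsim kn\log(1/\sum p_i^q)/(q-1)\approx m\,\sdim(\mu,q)$ is only of order $m$, so the comparison reads $m\,\sdim(\mu,q)\lesssim km\,D_\mu(q)$, which for large $k$ never contradicts $D_\mu(q)<\sdim(\mu,q)$. Nor can disjointness of the $\mathcal{S}_j$ save the argument: the scale window where $S_{\lambda^{jn}}\mu_n$ can branch has log-width $\approx n\log(1/\delta_0)$, typically much larger than the step $n\log(1/\lambda)$, and in any case disjointness would only give $D\sum_j|\mathcal{S}_j|\le m$, still compatible with the lower bound when $\sdim(\mu,q)<1$.

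The paper's argument is structurally different and uses an ingredient you have omitted entirely: the differentiability of $\tau$ at $q$ and the resulting multifractal regularity. One first restricts to such $q$, sets $\alpha=\tau'(q)$, and shows that almost all of $\|\mu^{(m)}\|_q^q$ is carried by $\approx 2^{\tau^*(\alpha)m}$ dyadic intervals of mass $\approx 2^{-\alpha m}$, uniformly across scales and locations (this is where self-similarity enters, via Proposition~\ref{prop:Lq-over-small-set-is-small}). The inverse theorem is applied \emph{once}, to $(\mu^{(m)},\rho)$, to prove a general flattening statement (Theorem~\ref{thm:conv-with-ssm-flattens}): if $\|\rho\|_q^{q'}\le 2^{-\sigma m}$ then $\|\rho*\mu^{(m)}\|_q^q\le 2^{-\e m}\|\mu^{(m)}\|_q^q$. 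The contradiction here is that the regular set $A\subset\supp\mu^{(m)}$ from Theorem~\ref{thm:inverse-thm} would have nearly full branching on a positive fraction of scales (forced by the lower bound on $|\mathcal{S}|$ coming from $\|\rho\|_q$ small), hence below-average branching on another positive fraction, which the multifractal regularity forbids. This flattening is then fed into the single (not iterated) relation $\mu=\mu_n*S_{\lambda^n}\mu$ to show $\log\|\mu_n^{(Rm(n))}\|_q^q/(n\log\lambda)\to\tau(q)$ for every fixed $R$; exponential separation is invoked only at the end to identify $\|\mu_n^{(Rm(n))}\|_q=\|\Delta\|_q^n$ along a subsequence. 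Without the multifractal input there is no apparent way to extract a single-scale contradiction from Theorem~\ref{thm:inverse-thm}.
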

We recall that ``homogeneous'' here refers to the fact that all scaling factors are equal. We may and do assume that $\lambda>0$; if $\lambda<0$, note that $\mu$ can also be generated by the WIFS $(f_i f_j, p_i p_j)_{i,j\in I}$, for which the scaling factor is $\lambda^2>0$. This iteration of the IFS does not change the validity of exponential separation.

The key advantage of homogeneity is that, in this case, the self-similar measure $\mu$ has an infinite convolution structure: if $\Delta= \sum_{i\in I} p_i \delta_{t_i}$, then
\begin{equation} \label{eq:convolution-structure}
\mu = *_{n=0}^\infty S_{\lambda^n}\Delta,
\end{equation}
where $S_a(x)=ax$ rescales by $a$. Formally, this infinite convolution is defined as the push-forward of the countable self-product $\mu^{\N}$ under the series expansion map $(x_1,x_2,\ldots)\mapsto \sum_{n=0}^\infty x_n$; this is well-defined since the series always converges absolutely. To verify that this is indeed the self-similar measure, one only needs to check that it satisfies the self-similarity relation
\[
\mu = \sum_{i\in I} p_i \, f_i\mu.
\]
In more probabilistic terms, $\mu$ can also be defined as the distribution of the random series $\sum_{n=0}^\infty \lambda^n X_n$, where $X_n$ are IID random variables with distribution $\Delta$. The well-known fact that the distribution of a sum of independent random variables is the convolution of the distributions (which extends to countable sums) then gives another derivation of \eqref{eq:convolution-structure}.

\subsection{Outline of the proof}

The overall strategy of the proof of Theorem \ref{thm:main-homogeneous} follows the broad outline of \cite{Hochman14}. However, while Hochman's method is based on entropy, we need to deal with $L^q$ norms and, as we will see, this forces substantial changes in the implementation of the outline.

The right-hand side in \eqref{eq:main-thm} is easily seen to be an upper bound for the left-hand side, so the task is to show the reverse inequality. Write $\tau=\tau_\mu$ and $D=D_\mu$. We want to show that if $D(q)<1$ (or, equivalently, $\tau(q)<q-1$) then $D(q)=\sdim(\mu,q)$ (under the hypothesis of exponential separation).

Recall that the $L^q$ spectrum $\tau(q)$ is concave, so in particular it is continuous and differentiable outside of at most a countable set. Hence it is enough to prove the claim above for a fixed differentiability point $q$. The advantage of this assumption is that the ``multifractal structure'' of a measure $\mu$ is known to behave in a regular way for points $q$ of differentiability of the spectrum. In particular, we will see that if $\alpha=\tau'(q)$ then, for large enough $m$, ``almost all'' of the contribution to the sum $\sum_{J\in\mathcal{D}_m}\mu(J)^q$ comes from $\approx 2^{\tau^*(\alpha)m}$ intervals $I$ such that $\mu(J)\approx 2^{\alpha m}$; here $\tau^*$ is the Legendre transform of $\tau$ (see \S\ref{subsec:multifractal} for the definition). Moreover, using the self-similarity of $\mu$, we establish also a multi-scale version of this fact, see Proposition \ref{prop:Lq-over-small-set-is-small}.

The following is the key estimate in the proof; as we will see in Section \ref{sec:applications}, it has other applications.
Recall that $\mu^{(m)}$ is given by
\begin{equation} \label{eq:def-mu-m}
\mu^{(m)}(j 2^{-m}) = \mu([j2^{-m},(j+1)2^{-m}))
\end{equation}
and that, by definition, $\|\mu^{(m)}\|_q^q= S_m(\mu,q) \approx 2^{-m \tau(q)}$.
\begin{theorem} \label{thm:conv-with-ssm-flattens}
Let $\mu$ be a self-similar measure associated to a homogeneous WIFS (not necessarily with exponential separation) and let $q>1$. Suppose $\tau_\mu(q)<q-1$. Then for every $\sigma>0$ there is $\e=\e(\sigma,q)>0$ such that the following holds for all large enough $m$: if $\rho$ is an arbitrary $2^{-m}$-measure such that $\|\rho\|_q^{q'} \le 2^{-\sigma m}$, then
\begin{equation} \label{eq:outline-conv-flattens-mu}
\|\rho*\mu^{(m)}\|_q^q \le 2^{-\e m} \|\mu^{(m)}\|_q^q.
\end{equation}
\end{theorem}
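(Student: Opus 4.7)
The plan is to argue by contradiction using the inverse theorem (Theorem~\ref{thm:inverse-thm}) together with the multi-scale multifractal description of $\mu$ supplied by Proposition~\ref{prop:Lq-over-small-set-is-small}. Since $\tau=\tau_\mu$ is concave, it is differentiable outside a countable set, so I first treat a $q$ at which $\alpha:=\tau'(q)$ exists, the general case following analogously via one-sided derivatives. Set $\beta=\tau^*(\alpha)=q\alpha-\tau(q)$. Concavity together with $\tau(1)=0$ forces the tangent at $q$ to lie below the chord from $1$ to $q$, so $\alpha\le \tau(q)/(q-1)=D_\mu(q)$; the identity $\beta-D_\mu(q)=q(\alpha-D_\mu(q))$ then gives $\beta\le D_\mu(q)$. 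The hypothesis $\tau(q)<q-1$ therefore leaves a positive gap $\eta_0:=1-D_\mu(q)>0$, which will drive the contradiction.

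Suppose the conclusion fails, so that for arbitrarily small $\e>0$ one can find a $2^{-m}$-measure $\rho$ with $\|\rho\|_q^{q'}\le 2^{-\sigma m}$ and $\|\rho*\mu^{(m)}\|_q\ge 2^{-\e m}\|\mu^{(m)}\|_q$. I apply Theorem~\ref{thm:inverse-thm} to $(\mu^{(m)},\rho)$ with parameters $q$, a small $\delta>0$, and large $D_0$ (all to be fixed later), producing $D\ge D_0$, $\ell$ with $m=\ell D$, sets $A\subset\supp\mu^{(m)}$ and $B\subset\supp\rho$, and $\mathcal{S}\subset[\ell]$ satisfying (A1)--(A4), (B1)--(B4), (5), and (6). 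Combining conclusion (6) with the hypothesis $\|\rho\|_q^{q'}\le 2^{-\sigma m}$ and the asymptotic $\log\|\mu^{(m)}\|_q^{-q'}=m D_\mu(q)+o(m)$ gives
\[
(\sigma-\delta)\ell \;\le\; |\mathcal{S}| \;\le\; (D_\mu(q)+\delta)\ell.
\]

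I next feed in Proposition~\ref{prop:Lq-over-small-set-is-small}, which for a homogeneous self-similar $\mu$ at a differentiability point of $\tau$ should produce, for any prescribed $\eta>0$ and $D,\ell$ sufficiently large, a set $\mathcal{T}\subset[\ell]$ of \emph{good scales} with $|\mathcal{T}|\ge(1-\eta)\ell$ on which almost all of $\|\mu^{(m)}\|_q^q$ is carried by cells that branch into at most $2^{(\beta+\eta)D}$ children at every $s\in\mathcal{T}$. Since by (A1)--(A2) the set $A$ captures at least a $2^{-q\delta m}$ fraction of $\|\mu^{(m)}\|_q^q$ with $\mu$-mass essentially uniform on its atoms, once $\delta\ll\eta$ the contribution to $\|\mu^{(m)}|_A\|_q^q$ from cells of $A$ outside the concentrated set is negligible; the $(D,\ell,R')$-regularity of $A$ then forces $R'_s\le 2^{(\beta+\eta)D}$ for every $s\in\mathcal{T}$, since otherwise averaging the common branching $R'_s$ over all parent cells at block $s$ would overshoot the count of concentrated parent cells.

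To close the argument I choose $\delta,\eta$ small enough that $\delta+\eta<\min(\sigma,\eta_0/2)$. Then $|\mathcal{S}|+|\mathcal{T}|>\ell$, so $\mathcal{S}\cap\mathcal{T}\neq\varnothing$; on any $s$ in this intersection, conclusion (5) gives $R'_s\ge 2^{(1-\delta)D}$ while the previous step gives $R'_s\le 2^{(\beta+\eta)D}$, so $\beta\ge 1-\delta-\eta>D_\mu(q)$, contradicting $\beta\le D_\mu(q)$. Tracking $\e$ back through the inverse theorem as a function of $\delta$ then yields the required $\e=\e(\sigma,q)>0$. The main obstacle will be the transfer step in the previous paragraph: passing from ``$A$ carries almost all of $\|\mu^{(m)}\|_q^q$'' together with Proposition~\ref{prop:Lq-over-small-set-is-small} to a \emph{per-scale} branching bound on $A$. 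This is precisely where homogeneity is indispensable, since self-similarity is what makes the $L^q$-concentrated portion of $\mu$ exhibit an approximately constant per-block branching of $2^{\beta D}$ across scales, rather than merely possessing the correct overall cardinality $2^{\beta m}$.
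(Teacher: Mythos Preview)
Your overall strategy—combine the inverse theorem with the multifractal structure of $\mu$—matches the paper, and your preliminary bounds ($\beta\le D_\mu(q)<1$, the sandwich $(\sigma-\delta)\ell\le|\mathcal{S}|\le(D_\mu(q)+\delta)\ell$) are correct. But the route you outline to the contradiction has a genuine gap, precisely at the ``transfer step'' you flag at the end; it is not merely an obstacle but the place where your argument, as written, does not go through.

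You want to show that on a set $\mathcal{T}$ of density $\ge 1-\eta$ one has $R'_s\le 2^{(\beta+\eta)D}$, and then collide this with $R'_s\ge 2^{(1-\delta)D}$ on $\mathcal{S}$. Proposition~\ref{prop:Lq-over-small-set-is-small} does \emph{not} yield such a per-scale upper bound. Part~(i) says that a sub-collection of children that is too \emph{small} (size $\le 2^{(\beta-\kappa)D}$) carries little $L^q$ mass; part~(ii) only upper-bounds the total $L^q$ sum over all children. Neither direction implies that the $L^q$ mass is concentrated on $\le 2^{(\beta+\eta)D}$ children. The global bound $|A|\le 2^{(\beta+\delta')m}$ (which does follow, from Lemma~\ref{lem:size-set-A-in-terms-of-f-alpha}) only controls the \emph{average} of $\log R'_s$ over $s$, not the individual values—nothing rules out, say, half the scales having $R'_s=2^{2\beta D}$ and half having $R'_s=1$. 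And self-similarity does not rescue the step: the intermediate cells $I\in\mathcal{D}_{sD}(A)$ need not have ``typical'' $\mu$-mass $\approx 2^{-\alpha sD}$, so you cannot import a scale-$D$ version of Lemma~\ref{lem:size-set-A-in-terms-of-f-alpha} cell by cell.

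The paper's argument avoids this obstruction by going in the opposite direction. From the \emph{global} bound $|A|\le 2^{(\beta+\delta')m}$ together with $R'_s\ge 2^{(1-\delta)D}$ on $|\mathcal{S}|\ge(\sigma-\delta)\ell$ scales, a counting argument (using $\beta<1$) forces a positive proportion $\gamma\ell$ of scales to have $R'_s\le 2^{(\beta-\kappa)D}$, i.e.\ \emph{below}-average branching. Proposition~\ref{prop:Lq-over-small-set-is-small} is then applied telescopically to $L_s:=-\log\sum_{J\in\mathcal{D}_{sD}(A)}\mu(J)^q$: at every scale $L_{s+1}\ge L_s+(\tau(q)-\delta)D$ by part~(ii), and at each of the $\gamma\ell$ under-branching scales one gets the sharper increment $L_{s+1}\ge L_s+(\tau(q)+\eta)D$ by part~(i). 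Summing yields $\|\mu^{(m)}|_A\|_q^q\le 2^{-(\tau(q)+\eta\gamma/2)m}$, contradicting~(A1). In short, Proposition~\ref{prop:Lq-over-small-set-is-small} enters as a \emph{penalty for under-branching} rather than as a cap on branching; this reversal is the missing idea in your outline.
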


This theorem is proved by combining the inverse theorem for the $L^q$ norm of convolutions (Theorem \ref{thm:inverse-thm}), together with the study of the multifractal structure of $\mu$. We sketch the idea very briefly: suppose \eqref{eq:outline-conv-flattens-mu} fails. The inverse theorem then asserts that there is a regular subset $A$ of $\supp(\mu^{(m)})$ that captures much of the $L^q$ norm of $\mu^{(m)}$. By part (5) of the inverse theorem, and since $\rho$ is assumed to have exponentially small $L^q$ norm, $A$ must have almost full branching on a positive density set of  scales in a multi-scale decomposition. But $A$ itself does not have full branching (this will follow from the assumption $\tau(q)<q-1$, which rules out $\mu^{(m)}$ having too small $L^q$ norm). So there must also be a positive density set of scales on which $A$ has smaller than average branching. The regularity of the multifractal spectrum discussed above rules this out, since it forces $A$ to have an almost constant branching on almost all scales. For a detailed proof, see \S\ref{subsec:conv-ssm-flattens} below.

The conclusion of the proof of Theorem \ref{thm:main-homogeneous} from \eqref{eq:outline-conv-flattens-mu} tracks fairly closely the ideas of \cite{Hochman14}. One consequence of self-similarity, as expressed by \eqref{eq:convolution-structure}, is that
\[
\mu = \mu_n * S_{\lambda^n} \mu,
\]
where
\begin{equation} \label{eq:def-mu-n}
\mu_n = *_{j=0}^{n-1} S_{\lambda^j} \Delta.
\end{equation}
Note that the atoms of $\mu_n$ are the points of the form $f_{i_1}\circ \cdots \circ f_{i_n}(0)$. The exponential separation assumption implies that there exist $R\in\N$ such that all these atoms are distinct and $\lambda^{Rn}$-separated. Hence for this value of $R$ we have
\[
\frac{\log \|\mu_{n}^{(R m)}\|_q^q}{(q-1)n\log(1/\lam)}  = \frac{\log \|\mu_{n}\|_q^q}{(q-1)n\log(1/\lam)} = \frac{n \log\|\Delta\|_q^q}{(q-1)n\log(1/\lam)},
\]
where $m=m(n)$ is chosen so that $2^{-m}\le \lam^n$ and $2^{-m}\sim \lam^n$. It is easy to see that the right-hand side is equal to the right-hand side of \eqref{eq:main-thm}. Hence, it remains to show that
\begin{equation} \label{eq:outline-Lq-norm-scale-Rm}
\lim_{n\to\infty}\frac{\log \|\mu_{n}^{(R m)}\|_q^q}{n\log(1/\lam)} = \tau(q).
\end{equation}
In other words, we need to show that the $L^q$ norm of $\mu_{n}$ at scale $2^{-m}\approx \lam^n$ (which is easily seen to be comparable to the $L^q$ norm of $\mu$ at scale $2^{-m}$, and hence is $\approx S_m(\mu,q)^{1/q}$) nearly exhausts the $L^q$ norm of $\mu_{n}$ at the much finer scale $2^{-R m}$ which, in turn, equals the full $L^q$ norm of $\mu_{n}$, by the exponential separation assumption.

To show \eqref{eq:outline-Lq-norm-scale-Rm}, we recall that $\mu = \mu_{n}* S_{\lam^n}\mu$, and use this to decompose
\[
\mu^{((R+1) m)} =  \sum_{J\in\mathcal{D}_m} \mu(J)  \wt{\rho}_J * S_{\lam^n}\mu,
\]
where $\wt{\rho}_J$ is the normalized restriction of $\mu_{n}$ to $J$. Since the supports of $\wt{\rho}_J * S_{\lam^n}\mu$ have bounded overlap, it is not hard to deduce that
\[
\|\mu^{((R+1) m)}\|_q^q \approx  \sum_{J\in\mathcal{D}_m} \mu(J)^q \| \rho_J * \mu^{(R m)}\|_q^q,
\]
where $\rho_J  = S_{\lam^{-n}}\wt{\rho}_J$. This is the point where we apply Theorem \ref{thm:conv-with-ssm-flattens}, to conclude that if on the right-hand side above we only add over those $J$ such that $\|\rho_J\|_q \ge 2^{-\sigma q}$, where $\sigma>0$ is arbitrary, then, provided $n$ is large enough depending on $\sigma$, we still capture almost all of the left-hand side. This follows since \eqref{eq:outline-conv-flattens-mu} can be shown to imply that the contribution of the remaining $J$ is exponentially smaller than the left-hand side (incidentally, this is the only step where it is crucial to use that $q>1$). A similar calculation, now with $\mu_{n}^{((R+1)m)}$ in place of $\mu^{((R+1) m)}$ in the left-hand side, then shows that \eqref{eq:outline-Lq-norm-scale-Rm} holds, finishing the proof.

\subsection{Notational conventions}

Throughout this section, $\mu$ denotes a self-similar measure associated to a homogeneous WIFS $\{ \lambda x+t_i\}_{i\in I}$ with weights $(p_i)_{i\in I}$. We do \emph{not} assume exponential until the very end, when we finish the proof of Theorem \ref{thm:main-homogeneous}.  We continue to denote
\[
\Delta = \sum_{i\in I} p_i \, \delta_{t_i}.
\]
Other measures, without any assumptions on self-similarity, will be denoted by $\rho$ and $\nu$, possibly with subindices.

We use Landau's $O(\cdot)$ and related notation: if $X,Y$ are two positive quantities, then $Y=O(X)$ means that $Y\le C X$ for some constant $C>0$, while $Y=\Omega(X)$ means that $X=O(Y)$, and $Y=\Theta(X)$ that $Y=O(X)$ and $X=O(Y)$. If the constant $C$ is allowed to depend on some parameters, these are often denoted by subscripts. For example, $Y=O_q(X)$ means that $Y\le C(q) X$, where $C(q)$ is a function depending on the parameter $q$.

\subsection{Preliminary lemmas}

In this section we collect some standard lemmas for later reference. They are all of the form: bounded overlapping does not affect $L^q$ norms too much. We refer to \cite[Section 4]{Shmerkin19} for the very short proofs.

\begin{lemma} \label{lem:Holder}
Let $(Y,\nu,\mathcal{B})$ be a probability space. Suppose $\mathcal{P},\mathcal{Q}$ are finite families of measurable subsets of $Y$ such that each element of $\mathcal{P}$ can be covered by at most $M$ elements of $\mathcal{Q}$ and each element of $\mathcal{Q}$ intersects at most $M$ elements of $\mathcal{P}$. Then, for every $q\ge 1$,
\[
\sum_{P\in\mathcal{P}} \nu(P)^q  \le M^q \sum_{Q\in\mathcal{Q}} \nu(Q)^{q}
\]
\end{lemma}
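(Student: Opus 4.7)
The plan is to combine the covering hypothesis with the power mean inequality (a special case of Hölder) and then use the other hypothesis in reverse, when exchanging the order of summation.

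First I would fix $P \in \mathcal{P}$ and choose a subfamily $\mathcal{Q}(P) \subset \mathcal{Q}$ with $|\mathcal{Q}(P)| \le M$ that covers $P$. Subadditivity of $\nu$ gives $\nu(P) \le \sum_{Q \in \mathcal{Q}(P)} \nu(Q)$. Raising to the $q$-th power and applying the power-mean inequality (equivalently, Hölder with exponents $q$ and $q/(q-1)$ against the constant function $1$ on $\mathcal{Q}(P)$) yields
\[
\nu(P)^q \le |\mathcal{Q}(P)|^{q-1} \sum_{Q \in \mathcal{Q}(P)} \nu(Q)^q \le M^{q-1} \sum_{Q \in \mathcal{Q}(P)} \nu(Q)^q.
\]

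Next I would sum over $P \in \mathcal{P}$ and swap the order of summation, getting
\[
\sum_{P \in \mathcal{P}} \nu(P)^q \le M^{q-1} \sum_{Q \in \mathcal{Q}} \nu(Q)^q \cdot |\{ P \in \mathcal{P} : Q \in \mathcal{Q}(P) \}|.
\]
Since $Q \in \mathcal{Q}(P)$ forces $Q \cap P \ne \varnothing$, the second hypothesis bounds the cardinality on the right by $M$. Substituting this bound produces the factor $M^{q-1} \cdot M = M^q$ and closes the argument.

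There is no real obstacle here; the only subtlety is being careful to apply the first hypothesis inside the $q$-th power (to pick up $M^{q-1}$) and the second hypothesis after swapping the sums (to pick up the remaining factor of $M$). The choice of the cover $\mathcal{Q}(P)$ can be made arbitrarily from among the covers guaranteed by the hypothesis; no measurability or minimality issues arise since $\mathcal{P}$ and $\mathcal{Q}$ are finite.
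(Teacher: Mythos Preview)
Your argument is correct and is the standard one. The paper itself does not give a proof of this lemma; it simply refers to \cite[Section 4]{Shmerkin19}, where the same argument appears. One small wording issue: the step ``$Q \in \mathcal{Q}(P)$ forces $Q \cap P \ne \varnothing$'' is only guaranteed if you discard from $\mathcal{Q}(P)$ any sets disjoint from $P$ (which you may do without harm), so your closing remark that the cover ``can be made arbitrarily'' is slightly at odds with this; just say you pick a cover of size at most $M$ all of whose members meet $P$.
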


\begin{lemma} \label{lem:L-q-norm-almost-disj-supports}
Let $\nu=\sum_{i=1}^\ell \nu_i$, where $\nu_i$ are finitely supported measures on a space $Y$, such that each point is in the support of at most $M$ of the $\nu_i$. Then
\[
\|\nu\|_q^q \le M^{q-1} \sum_{i=1}^\ell \|\nu_i\|_q^q.
\]
\end{lemma}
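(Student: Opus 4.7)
The plan is to reduce the statement to a pointwise inequality, apply the elementary power-mean inequality atom by atom, and then sum. First I would expand the $L^q$ norm as $\|\nu\|_q^q = \sum_y \nu(y)^q$, where the sum runs over the (countable) union of the supports $\supp(\nu_i)$. Since the $\nu_i$ are finitely supported measures, this is a finite sum of nonnegative terms, and there is no convergence issue.

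Next, I would fix an atom $y$ and let $\mathcal{I}(y) = \{i : y\in\supp(\nu_i)\}$, so by hypothesis $|\mathcal{I}(y)|\le M$. By definition $\nu(y) = \sum_{i\in\mathcal{I}(y)} \nu_i(y)$, a sum of at most $M$ nonnegative terms. The key tool is the power-mean inequality (equivalently H\"older's inequality with exponents $q$ and $q'$ applied to the pair of sequences $(\nu_i(y))_{i\in\mathcal{I}(y)}$ and $(1)_{i\in\mathcal{I}(y)}$): for $q\ge 1$ and any $a_1,\ldots,a_k \ge 0$ with $k\le M$,
\[
\left(\sum_{i=1}^k a_i\right)^q \le k^{q-1}\sum_{i=1}^k a_i^q \le M^{q-1}\sum_{i=1}^k a_i^q.
\]
Applying this with $a_i=\nu_i(y)$ yields the pointwise bound $\nu(y)^q \le M^{q-1}\sum_{i\in\mathcal{I}(y)}\nu_i(y)^q$, and trivially $\nu(y)^q \le M^{q-1}\sum_{i=1}^\ell \nu_i(y)^q$ since the omitted terms vanish.

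Finally, I would sum the pointwise inequality over $y$ and swap the order of summation:
\[
\|\nu\|_q^q = \sum_y \nu(y)^q \le M^{q-1}\sum_y \sum_{i=1}^\ell \nu_i(y)^q = M^{q-1}\sum_{i=1}^\ell \|\nu_i\|_q^q,
\]
which is exactly the claimed inequality. There is essentially no serious obstacle here; the only thing to be careful about is invoking the correct form of the power-mean/H\"older inequality with the right constant $k^{q-1}$ (rather than, say, $k^q$), so that the bounded-overlap hypothesis $k\le M$ feeds in with the sharp exponent $q-1$. The statement is tight when all $\nu_i(y)$ are equal on the overlap, which confirms $M^{q-1}$ is the correct power.
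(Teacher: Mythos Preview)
Your proof is correct and is exactly the standard argument: apply H\"older (or the power-mean inequality) pointwise at each atom using the bounded-overlap hypothesis, then sum. The paper itself omits the proof, referring to \cite[Section 4]{Shmerkin19} for the ``very short'' argument, which is precisely the one you give.
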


\begin{lemma} \label{lem:discr-norm-conv-equivalence}
For any $q\in (1,\infty)$, for any $\nu_1,\nu_2\in\mathcal{P}_1$  and any $m\in\N$,
\[
 \|(\nu_1*\nu_2)^{(m)}\|_q^q =\Theta_q(1) \|\nu_1^{(m)}*\nu_2^{(m)}\|_q^q.
\]
\end{lemma}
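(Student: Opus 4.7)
The plan is to establish two pointwise comparisons between the atom masses of $(\nu_1*\nu_2)^{(m)}$ and those of $\nu_1^{(m)}*\nu_2^{(m)}$, differing by at most a one-unit ``carry,'' and then raise them to the $q$-th power and sum. The starting observation is that if $x\in I_{j_1}:=[j_1 2^{-m},(j_1+1)2^{-m})$ and $y\in I_{j_2}$, then $x+y$ is forced into $I_{j_1+j_2}\cup I_{j_1+j_2+1}$, with the second alternative arising exactly when the fractional parts of $2^m x$ and $2^m y$ sum to at least $1$. Consequently, the product mass $\nu_1(I_{j_1})\nu_2(I_{j_2})$ contributes to $(\nu_1*\nu_2)(I_k)$ only for $j_1+j_2\in\{k-1,k\}$, and symmetrically, all of this product mass with $j_1+j_2=k$ is deposited inside $I_k\cup I_{k+1}$ under the convolution.

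From this I would read off the two pointwise inequalities
\[
(\nu_1*\nu_2)(I_k)\le (\nu_1^{(m)}*\nu_2^{(m)})(k\cdot 2^{-m})+(\nu_1^{(m)}*\nu_2^{(m)})((k-1)\cdot 2^{-m}),
\]
\[
(\nu_1^{(m)}*\nu_2^{(m)})(k\cdot 2^{-m})\le (\nu_1*\nu_2)(I_k)+(\nu_1*\nu_2)(I_{k+1}),
\]
valid for every $k\in\Z$. Raising each to the $q$-th power, applying the elementary bound $(a+b)^q\le 2^{q-1}(a^q+b^q)$, and summing in $k$ would yield $\|(\nu_1*\nu_2)^{(m)}\|_q^q\le 2^q\|\nu_1^{(m)}*\nu_2^{(m)}\|_q^q$ and the reverse inequality with the same constant, which is precisely the asserted $\Theta_q(1)$ equivalence.

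A cleaner packaging of either summation step would be to invoke Lemma \ref{lem:Holder} with $\mathcal{P}=\{[k 2^{-m},(k+2)2^{-m})\}_{k\in\Z}$ and $\mathcal{Q}=\mathcal{D}_m$ (each $P$ meets at most two $Q$'s and vice versa), or alternatively Lemma \ref{lem:L-q-norm-almost-disj-supports}, by decomposing $\nu_1*\nu_2$ as the sum over pairs $(j_1,j_2)$ of the pushforward under addition of $\nu_1|_{I_{j_1}}\times\nu_2|_{I_{j_2}}$ and noting that, for each fixed $k\in\Z$, only pairs with $j_1+j_2\in\{k-1,k\}$ can place mass in $I_k$. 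I do not expect any serious obstacle: the whole proof is a bookkeeping of the one-position carry that can occur when adding two numbers known only at dyadic scale $2^{-m}$, and the only point requiring mild care is to track the $\pm 1$ shift in the index $k$ so that the constants genuinely depend only on $q$.
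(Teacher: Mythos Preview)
Your proposal is correct and is exactly the standard ``one-position carry'' argument one expects here; the paper itself does not spell out a proof but refers to \cite[Section 4]{Shmerkin19}, where the argument is of the same type (bounded overlap of the relevant intervals, packaged via Lemma~\ref{lem:Holder}). The only cosmetic remark is that your alternative invocation of Lemma~\ref{lem:Holder} is precisely how the paper frames all of these preliminary lemmas, so that route is the closest match.
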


Recall the definition of $\mu_n$ given in \eqref{eq:def-mu-n}.
\begin{lemma} \label{lem:comparison-mu-m-mu}
For any $q\in (1,\infty)$,
\[
 \|\mu^{(m)}\|_q^q =\Theta_q(1) \|\mu_n^{(m)}\|_q^q .
\]
\end{lemma}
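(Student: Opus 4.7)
The plan is to exploit the convolution decomposition $\mu = \mu_n * S_{\lambda^n}\mu$ together with Lemmas~\ref{lem:L-q-norm-almost-disj-supports} and \ref{lem:discr-norm-conv-equivalence}. I read the lemma under the implicit hypothesis (dictated by the context in which it will be applied, namely $2^{-m}\sim\lambda^n$) that $\lambda^n \le C\cdot 2^{-m}$ for some constant $C$ depending only on $\mu$; the $\Theta_q(1)$ constants will then depend on $q$ and, through $\operatorname{diam}(\operatorname{supp}\mu)$, on $\mu$.

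First, since $\mu = \mu_n * S_{\lambda^n}\mu$, Lemma~\ref{lem:discr-norm-conv-equivalence} yields
\[
\|\mu^{(m)}\|_q^q = \Theta_q(1)\,\|\mu_n^{(m)} * \sigma\|_q^q, \qquad \sigma := (S_{\lambda^n}\mu)^{(m)}.
\]
Because $\operatorname{diam}(\operatorname{supp}(S_{\lambda^n}\mu)) = \lambda^n \operatorname{diam}(\operatorname{supp}\mu) = O(2^{-m})$, the atomic probability measure $\sigma$ is supported on at most $M=O(1)$ points of $2^{-m}\Z$; write $\sigma = \sum_{j=1}^M c_j\,\delta_{a_j}$ with $\sum_j c_j = 1$. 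Then $\mu_n^{(m)} * \sigma = \sum_{j=1}^M c_j\,T_{a_j}\mu_n^{(m)}$, a convex combination of $M$ translates of $\mu_n^{(m)}$.

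The upper bound is immediate from Young's convolution inequality (or from applying Lemma~\ref{lem:L-q-norm-almost-disj-supports} with the $M$ translates and $\sum_j c_j^q \le 1$):
\[
\|\mu_n^{(m)} * \sigma\|_q \le \|\mu_n^{(m)}\|_q\,\|\sigma\|_1 = \|\mu_n^{(m)}\|_q.
\]
For the lower bound, choose $j^*$ with $c_{j^*} \ge 1/M$; since the summands are nonnegative, pointwise $\mu_n^{(m)} * \sigma \ge c_{j^*}\,T_{a_{j^*}}\mu_n^{(m)}$, hence
\[
\|\mu_n^{(m)} * \sigma\|_q \ge c_{j^*}\,\|\mu_n^{(m)}\|_q \ge M^{-1}\|\mu_n^{(m)}\|_q.
\]
Combining, $\|\mu_n^{(m)} * \sigma\|_q^q = \Theta_q(1)\,\|\mu_n^{(m)}\|_q^q$, which, together with the first equality, gives the lemma.

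There is no real obstacle here: the argument is routine bookkeeping once one spots that the tail $S_{\lambda^n}\mu$ is a probability measure whose support fits inside $O(1)$ dyadic intervals of length $2^{-m}$, so convolving with it only redistributes mass across a bounded number of translates. The only point deserving care is making the implicit assumption between $m$ and $n$ explicit and absorbing the resulting factors into the $\Theta_q$ constants.
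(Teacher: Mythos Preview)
Your argument is correct. The paper does not actually prove this lemma here; it groups it with Lemmas~\ref{lem:Holder}--\ref{lem:discr-norm-conv-equivalence} under the heading ``bounded overlapping does not affect $L^q$ norms too much'' and refers to \cite[Section~4]{Shmerkin19} for the short proofs. Your approach --- invoke Lemma~\ref{lem:discr-norm-conv-equivalence} on the decomposition $\mu=\mu_n * S_{\lambda^n}\mu$, then observe that $(S_{\lambda^n}\mu)^{(m)}$ has $O(1)$ atoms so convolution with it is sandwiched between $M^{-1}$ times and $1$ times a translate of $\mu_n^{(m)}$ --- is exactly in that spirit and is a clean way to do it. One could equally well bypass Lemma~\ref{lem:discr-norm-conv-equivalence} and argue directly via Lemma~\ref{lem:Holder}: since $S_{\lambda^n}\mu$ is supported on an interval of length $O(2^{-m})$, each $J\in\mathcal D_m$ satisfies $\mu(J)\le \mu_n(O(1)\text{ nearby dyadic intervals})$ and conversely $\mu_n(J)\le \mu(O(1)\text{ nearby dyadic intervals})$; but your route is just as good.

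Two small remarks. First, you are right to make the standing hypothesis $\lambda^n=O(2^{-m})$ explicit; the paper uses the lemma only with $m=m(n)$ chosen so that $2^{-m}\le\lambda^n<2^{1-m}$, so this is the intended regime. Second, Lemma~\ref{lem:discr-norm-conv-equivalence} is stated for measures in $\mathcal P_1$; after the normalization (made explicitly just before Proposition~\ref{prop:Lq-over-small-set-is-small}) that $\mu$ is supported on $[0,1)$, both $\mu_n$ and $S_{\lambda^n}\mu$ lie in $\mathcal P_1$, so the application is legitimate. Your parenthetical about constants depending on $\operatorname{diam}(\operatorname{supp}\mu)$ already covers this in any case.
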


\subsection{Multifractal structure}
\label{subsec:multifractal}

We turn to the multifractal estimates that will be required in the proof of Theorem \ref{thm:conv-with-ssm-flattens}. The Legendre transform plays a key role in multifractal analysis. Given a concave function $\zeta:\R\to\R$, its Legendre transform $\zeta^*:\R\to [-\infty,\infty)$ is defined as
\[
\zeta^*(\alpha) = \inf_{q\in\R} \alpha q -\zeta(q).
\]
It is easy to check that if $\zeta$ is concave and is differentiable at $q$, then
\[
\zeta^*(\alpha)=\alpha q-\zeta(q) \text{ for } \alpha=\zeta'(q).
\]
As indicated earlier, we will establish some regularity of the multifractal structure for those values of $q$ such that $\tau$ is differentiable at $q$.

The next lemma is well known; we include the very short proof for completeness.
\begin{lemma} \label{lem:f-alpha-smaller-than-one}
If $\tau$ is differentiable at $q>1$, $\tau(q)<q-1$, and $\alpha=\tau'(q)$, then $\tau^*(\alpha) \le \alpha < 1$
\end{lemma}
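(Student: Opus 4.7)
The plan is to exploit concavity of $\tau_\mu$ together with the boundary behaviour at $q=1$. From the paper's bounds $2^{(1-q)m} \le S_m(\mu,q) \le 1$ we have $0 \le \tau(p) \le p-1$ for every $p > 1$, so in particular $\tau(p) \to 0$ as $p \to 1^+$. This is the only input I will need beyond concavity and the hypothesis.

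Since $\tau$ is concave on $(1,\infty)$ and differentiable at $q$, the fact recorded just before the lemma gives the explicit value
\[
\tau^*(\alpha) \;=\; \alpha q - \tau(q),\qquad \alpha = \tau'(q).
\]
Concavity also says that $\tau$ lies below every tangent line: for all $p > 1$,
\[
\tau(p) \;\le\; \tau(q) + \alpha(p-q).
\]
Letting $p \to 1^+$ and using $\tau(p) \to 0$ yields $0 \le \tau(q) - \alpha(q-1)$, i.e.\ $\alpha(q-1) \le \tau(q)$.

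Both conclusions fall out of this single inequality. For the second, divide by $q-1 > 0$ and invoke the hypothesis $\tau(q) < q-1$ to obtain
\[
\alpha \;\le\; \frac{\tau(q)}{q-1} \;<\; 1.
\]
For the first, just rewrite the explicit value of $\tau^*(\alpha)$ using $\alpha(q-1) \le \tau(q)$:
\[
\tau^*(\alpha) \;=\; \alpha q - \tau(q) \;=\; \alpha - \bigl(\tau(q) - \alpha(q-1)\bigr) \;\le\; \alpha.
\]

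There is no real obstacle here: every step is a direct consequence of concavity plus the a priori bound $0 \le \tau(p) \le p-1$. The only thing to verify carefully is that the tangent-line inequality extends down to the boundary point $p=1$, which is handled by the limit $\tau(p) \to 0$ noted above rather than by evaluating $\tau$ at $1$ directly (since the concavity in the paper is stated only on $(1,\infty)$).
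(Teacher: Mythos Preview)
Your proof is correct and follows essentially the same route as the paper's. The only cosmetic differences are that the paper invokes $\tau(1)=0$ directly (the definition of $\tau$ makes perfect sense at $q=1$ and gives $0$), whereas you take a limit $p\to 1^+$; and for $\tau^*(\alpha)\le\alpha$ the paper simply plugs $q=1$ into the infimum defining $\tau^*$, while you rewrite the explicit value $\alpha q-\tau(q)$ using the inequality you already derived.
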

\begin{proof}
Since $\tau(1)=0$ (this is immediate from the definition) and  $\tau(q)<q-1$, we have $(\tau(q)-\tau(1))/(q-1)<1$. On the other hand, as $\tau$ is concave and differentiable at $q$, we must have $\alpha\le (\tau(q)-\tau(1))/(q-1)<1$. Furthermore, $\tau^*(\alpha)\le \alpha\cdot 1-\tau(1)=\alpha$, so the lemma follows.
\end{proof}

The following lemmas illustrate the regularity of the $L^q$ spectrum for values $q$ of differentiability of $\tau$ (or dually, points of strict concavity of $\tau^*$). The proofs are similar to  \cite[Theorem 5.1]{LauNgai99}. The heuristic to keep in mind is that, whenever $\alpha=\tau'(q)$ exists, almost all of the contribution to $\|\mu^{(m)}\|_q^q$ comes from $\approx 2^{\tau^*(\alpha) m}$ intervals, each of mass $\approx 2^{-\alpha m}$

\begin{lemma} \label{lem:size-set-A-in-terms-of-f-alpha}
Suppose that $\alpha_0=\tau'(q_0)$ exists for some $q_0\in (1,\infty)$.

Given $\e>0$, the following holds if $\delta$ is small enough in terms of $\e, q_0$ and $m$ is large enough in terms of $\e, q_0$ and $\delta$.

Suppose $\mathcal{D}'\subset\mathcal{D}_m$ is such that
\begin{enumerate}
\item[\textup{(1)}]  $2^{-\alpha m}\le\mu(J)\le 2\cdot 2^{-\alpha m}$ for all $J\in\mathcal{D}'$ and some $\alpha\ge 0$.
\item[\textup{(2)}]  $\sum_{J\in\mathcal{D}'} \mu(J)^{q_0} \ge 2^{-(\tau(q_0)+\delta)m}$.
\end{enumerate}
 Then $|\mathcal{D}'|\le 2^{m(\tau^*(\alpha_0)+\e)}$.
\end{lemma}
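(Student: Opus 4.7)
The plan is to obtain two-sided bounds on $|\mathcal{D}'|$: an upper bound valid for any $q>1$ coming from the definition of $\tau$ together with hypothesis (1), and a lower bound at $q=q_0$ coming from hypothesis (2). Differentiability of $\tau$ at $q_0$, which supplies the identity $\tau^*(\alpha_0)=\alpha_0 q_0-\tau(q_0)$, is then used to force the effective exponent $\alpha$ from (1) to be at most $\alpha_0+O(\e/q_0)$, which is what closes the gap between the Legendre bound at the exponent $\alpha$ and the desired bound at $\alpha_0$.

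For the upper bound, fix $q>1$ and a tolerance $\delta_0>0$. By definition of $\tau$, $S_m(\mu,q)\le 2^{-(\tau(q)-\delta_0)m}$ for $m\ge m_0(q,\delta_0)$. Combined with $\mu(J)\ge 2^{-\alpha m}$ on $\mathcal{D}'$, this yields
\[
|\mathcal{D}'|\,2^{-\alpha q m}\le S_m(\mu,q)\le 2^{-(\tau(q)-\delta_0)m},
\]
so $|\mathcal{D}'|\le 2^{(\alpha q-\tau(q)+\delta_0)m}$. For the lower bound, using hypothesis (2) and $\mu(J)\le 2\cdot 2^{-\alpha m}$,
\[
2^{-(\tau(q_0)+\delta)m}\le \sum_{J\in\mathcal{D}'}\mu(J)^{q_0}\le 2^{q_0}|\mathcal{D}'|\,2^{-\alpha q_0 m},
\]
giving $|\mathcal{D}'|\ge 2^{(\alpha q_0-\tau(q_0)-2\delta)m}$ once $m\ge q_0/\delta$.

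If $\alpha\le\alpha_0$, the upper bound at $q=q_0$ already yields $|\mathcal{D}'|\le 2^{(\alpha_0 q_0-\tau(q_0)+\delta_0)m}=2^{(\tau^*(\alpha_0)+\delta_0)m}$, which suffices as soon as $\delta_0<\e$. If $\alpha>\alpha_0$, I would invoke differentiability of $\tau$ at $q_0$ to select $h\in(0,q_0-1)$, depending only on $\e$ and $q_0$, such that
\[
\frac{\tau(q_0)-\tau(q_0-h)}{h}\le \alpha_0+\frac{\e}{4q_0}.
\]
Comparing the upper bound at $q=q_0-h$ with the lower bound at $q_0$ gives
\[
\alpha h\le \tau(q_0)-\tau(q_0-h)+2\delta+\delta_0\le \left(\alpha_0+\tfrac{\e}{4q_0}\right)h+2\delta+\delta_0,
\]
and choosing $\delta$ and $\delta_0$ small enough that $2\delta+\delta_0\le \e h/(4q_0)$ yields $\alpha-\alpha_0\le \e/(2q_0)$. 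The upper bound at $q_0$ with tolerance $\delta_0=\e/2$ then gives
\[
|\mathcal{D}'|\le 2^{(\tau^*(\alpha_0)+(\alpha-\alpha_0)q_0+\e/2)m}\le 2^{(\tau^*(\alpha_0)+\e)m}.
\]

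The main subtlety is the order of quantifiers. Once $\e$ and $q_0$ are given, differentiability of $\tau$ at $q_0$ produces $h>0$; then $\delta$ (and the tolerances $\delta_0$ applied at $q_0$ and $q_0-h$, which can be chosen independently) must be small enough to absorb error terms of order $1/h$, so $\delta$ is genuinely constrained in terms of $\e$ and $q_0$ as in the statement; finally $m$ must be large enough that the two upper bounds hold with their respective tolerances and that the $q_0/m$ term in the lower bound is dominated by $\delta$. No other nontrivial ingredient is needed; concavity of $\tau$ enters only implicitly through the existence of the derivative $\alpha_0$.
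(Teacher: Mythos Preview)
Your proof is correct and follows essentially the same approach as the paper: both obtain an upper bound on $|\mathcal{D}'|$ from the definition of $\tau$ at a nearby exponent $q_0-h$ and a lower bound at $q_0$ from hypotheses (1)--(2), then use differentiability of $\tau$ at $q_0$ to force $\alpha\le \alpha_0+O(\e/q_0)$ before concluding via the upper bound at $q_0$ and the Legendre identity $\tau^*(\alpha_0)=\alpha_0 q_0-\tau(q_0)$. The only cosmetic differences are that the paper ties the perturbation to $\delta$ by setting $h=\delta^{1/2}$ (whereas you first fix $h$ from differentiability and then take $\delta$ small in terms of $h$), and that you split off the easy case $\alpha\le\alpha_0$ explicitly.
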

\begin{proof}
Set $\eta:=\e/(3q_0)$, and pick $\delta\le \eta^2/9$, and also small enough that, if $q_1=q_0-\delta^{1/2}$, then
\begin{equation} \label{eq:using-tau-diff-1}
 \tau(q_0)-\tau(q_1) \le \delta^{1/2}\alpha_0 + \delta^{1/2}\eta .
\end{equation}
On one hand, using (1) and the definition of $\tau(q)$, we get
\begin{equation*}
2^{-(\tau(q_1)-\delta)m} \ge \|\mu^{(m)}\|_{q_1}^{q_1} \ge |\mathcal{D}'| 2^{- \alpha  q_1 m},
\end{equation*}
if $m$ is large enough (depending on $q_0, \tau$). On the other hand, by the assumptions (1)--(2),
\[
|\mathcal{D}'| 2^{-\alpha q_0 m} \ge 2^{-q_0} 2^{(-\tau(q_0)-\delta)m} \ge 2^{(-\tau(q_0)-2\delta)m}
\]
if $m\gg_{\delta,q_0} 1$. Eliminating $|\mathcal{D}'|$ from the last two displayed equations yields
\[
\alpha q_0 -\tau(q_0) -2\delta \le \alpha (q_0-\delta^{1/2})-\tau(q_0-\delta^{1/2})+\delta,
\]
so that, recalling \eqref{eq:using-tau-diff-1},
\[
\delta^{1/2}\alpha \le \tau(q_0)-\tau(q_0-\delta^{1/2}) +3\delta \le \delta^{1/2}\alpha_0 +\delta^{1/2}\eta+3\delta.
\]
Hence $\alpha-\alpha_0< 2\eta$, since we assumed $\delta\le (\eta/3)^2$.  Using this, we get that if $m\gg_\e 1$, then
\[
2^{(-\tau(q_0)+\e/3)m} \ge \|\mu^{(m)}\|_{q_0}^{q_0} \ge 2^{-q_0\alpha m} |\mathcal{D}'| \ge 2^{-q_0\alpha_0 m} 2^{-(q_0 2 \eta) m} |\mathcal{D}'|.
\]
The conclusion follows from the formula $\tau^*(\alpha_0)=q_0 \alpha_0-\tau(q_0)$ and our choice $\eta=\e/(3q_0)$.
\end{proof}

\begin{lemma}  \label{lem:Lq-sum-large-mass}

Let $q_0>0$ be such that $\alpha_0=\tau'(q_0)$ exists. Given $\sigma>0$, there is $\e=\e(\sigma,q_0)>0$ such that the following holds for large enough $m$ (in terms of $\sigma, q_0$):
\begin{equation} \label{eq:Lq-sum-large-mass}
\sum \{ \mu(J)^{q_0}: J\in\mathcal{D}_m, \mu(J) \ge 2^{-m(\alpha_0-\sigma)} \} \le 2^{-m(\tau(q_0)+\e)}.
\end{equation}
\end{lemma}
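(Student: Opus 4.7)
The plan is a simple Chebyshev--Markov trade-off: for a slightly larger exponent $q_1=q_0+\delta$, one rewrites $\mu(J)^{q_0}=\mu(J)^{q_1}\,\mu(J)^{-\delta}$ so that the restriction $\mu(J)\ge 2^{-m(\alpha_0-\sigma)}$ converts into a pointwise loss of $2^{m(\alpha_0-\sigma)\delta}$ on the factor $\mu(J)^{-\delta}$. One then pays this loss using the decay of $\|\mu^{(m)}\|_{q_1}^{q_1}$, which is essentially $2^{-m\tau(q_1)}$ by the definition of $\tau$. The required $\varepsilon>0$ will emerge from the gap between the chord slope $(\tau(q_1)-\tau(q_0))/\delta$ (which, by differentiability at $q_0$, is $\alpha_0-o(1)$ as $\delta\to 0$) and the ``effective slope'' $\alpha_0-\sigma$ dictated by the hypothesis.

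Concretely, I would first use concavity plus differentiability of $\tau$ at $q_0$: the right derivative $\lim_{q\to q_0^+}(\tau(q)-\tau(q_0))/(q-q_0)$ equals $\alpha_0$, so one can fix $\delta=\delta(\sigma,q_0)>0$ with
\[
\tau(q_0+\delta)-\tau(q_0)\ge (\alpha_0-\sigma/2)\,\delta,
\]
and set $q_1=q_0+\delta$. Since $q_0-q_1=-\delta<0$, for each $J\in\mathcal{D}_m$ appearing in the restricted sum one has $\mu(J)^{q_0-q_1}\le 2^{m(\alpha_0-\sigma)\delta}$, so extending the sum to all of $\mathcal{D}_m$ yields
\[
\sum_{J:\,\mu(J)\ge 2^{-m(\alpha_0-\sigma)}}\mu(J)^{q_0}\le 2^{m(\alpha_0-\sigma)\delta}\,\|\mu^{(m)}\|_{q_1}^{q_1}.
\]
For $m$ large enough (depending on $\sigma,q_0$), the $\liminf$ definition of $\tau(q_1)$ gives $\|\mu^{(m)}\|_{q_1}^{q_1}\le 2^{-m(\tau(q_1)-\sigma\delta/4)}$. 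Combining the two displays and invoking the lower bound on $\tau(q_1)-\tau(q_0)$ will yield the conclusion with $\varepsilon=\sigma\delta/4$.

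I expect no serious obstacle here; the argument is elementary. The only subtlety worth flagging is that we have only differentiability of $\tau$ at $q_0$, not strict concavity, so the chord slope $(\tau(q_1)-\tau(q_0))/\delta$ could in principle equal $\alpha_0$ exactly for all small $\delta$. The necessary slack for a positive $\varepsilon$ therefore comes entirely from the $\sigma>0$ appearing in the hypothesis $\mu(J)\ge 2^{-m(\alpha_0-\sigma)}$, rather than from any local strict concavity of $\tau$ at $q_0$. In particular, no appeal to Lemma \ref{lem:size-set-A-in-terms-of-f-alpha} or to any finer multifractal structure is needed.
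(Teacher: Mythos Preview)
Your argument is correct and rests on the same core idea as the paper's proof: pass to a slightly larger exponent $q_1=q_0+\delta$, use the definition of $\tau(q_1)$ to bound $\|\mu^{(m)}\|_{q_1}^{q_1}$, and exploit differentiability of $\tau$ at $q_0$ to compare $\tau(q_1)$ with $\tau(q_0)+\alpha_0\delta$. Your execution is in fact cleaner: the paper first slices the mass range into dyadic levels $\alpha_j=\alpha_0-\delta j$, bounds the count $N(\alpha_j,m)$ at each level via the $(q_0+\eta)$-moment, and then sums a geometric series over the levels with $\alpha_{j+1}\le \alpha_0-\sigma$. Your single Chebyshev inequality $\mu(J)^{q_0}\le 2^{m(\alpha_0-\sigma)\delta}\mu(J)^{q_1}$ replaces that entire decomposition and summation in one stroke, and your bookkeeping of the gain $\varepsilon=\sigma\delta/4$ is transparent. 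The level-set decomposition buys nothing extra here; it is presumably written that way in the paper to parallel the structure of the neighbouring Lemma~\ref{lem:size-set-A-in-terms-of-f-alpha}. Your remark that the positive $\varepsilon$ comes solely from the $\sigma$ in the hypothesis, and not from any strict concavity of $\tau$, is also exactly right.
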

\begin{proof}
Let $\eta\in (0,1)$ be small enough that
\begin{equation}   \label{eq:using-tau-diff-2}
\tau(q_0+\eta) \ge \tau(q_0) + \eta \alpha_0 - \delta,
\end{equation}
where $\delta= \eta\sigma/(4+2 q_0)$.

Let $\alpha_j = \alpha_0 -\delta j$, and write $N(\alpha_j,m)$ for the number of intervals $J$ in $\mathcal{D}_m$ such that $2^{-m\alpha_j}\le\mu(J)< 2^{-m\alpha_{j+1}}$. For any fixed value of $q$, if $m\gg_q 1$ then,
\begin{equation*}
N(\alpha_j,m) 2^{-m q\alpha_j} \le \|\mu^{(m)}\|_q^q \le 2^{-m(\tau(q)-\delta)}.
\end{equation*}
Applying this to $q=q_0+\eta$, and using \eqref{eq:using-tau-diff-2}, we estimate
\begin{align*}
N(\alpha_j,m) 2^{-m q_0\alpha_j} &\le 2^{m \eta \alpha_j} 2^{-m(\tau(q_0+\eta)-\delta)} \\
&\le 2^{2\delta m} 2^{-j\delta\eta m} 2^{-\tau(q_0)m}.
\end{align*}
Let $\mathcal{S}$ be the sum in the left-hand side of \eqref{eq:Lq-sum-large-mass} that we want to estimate. Using that $\delta=\eta\sigma/(4+2 q_0)$, we conclude that
\begin{align*}
\mathcal{S}  &\le \sum_{j: \delta (j+1)\ge \sigma} N(\alpha_j,m) 2^{-m q_0\alpha_{j+1}} \\
&\le \sum_{j: \delta (j+1)\ge \sigma}  2^{\delta q_0 m}  2^{2\delta m} 2^{-j\delta\eta m} 2^{-\tau(q_0)m}\\
&\le \sum_{j\ge 0} 2^{-j\delta\eta m} 2^{(2+q_0)\delta m}  2^{-\eta\sigma m} 2^{-\tau(q_0)m} \\
&\le O_{\delta\eta}(1) 2^{(\eta\sigma/2-\eta\sigma)m} 2^{-\tau(q_0)m},
\end{align*}
as claimed.
\end{proof}

\begin{lemma} \label{lem:Lq-sum-over-small-set}
Let $q_0>1$ be such that $\alpha_0=\tau'(q_0)$ exists. Given $\kappa>0$, there is $\e=\e(\kappa,q_0)>0$ such that the following holds for large enough $m$ (in terms of $q_0,\e$).

If $\mathcal{D}'\subset\mathcal{D}_m$ has $\le 2^{(\tau^*(\alpha_0)-\kappa)m}$ elements, then
\[
\sum_{J\in\mathcal{D}'} \mu(J)^{q_0} \le 2^{-(\tau(q_0)+\e)m}.
\]
\end{lemma}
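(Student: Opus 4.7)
The plan is to split $\mathcal{D}'$ into two parts according to whether the mass $\mu(J)$ is ``atypically large'' or ``at most typical''. The heuristic from multifractal analysis says that almost all of $\|\mu^{(m)}\|_{q_0}^{q_0}$ comes from roughly $2^{\tau^*(\alpha_0)m}$ intervals each of mass $\approx 2^{-\alpha_0 m}$; since we have many fewer than $2^{\tau^*(\alpha_0)m}$ intervals available in $\mathcal{D}'$, neither part can carry the full $L^{q_0}$ mass.

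Concretely, pick $\sigma := \kappa/(2q_0)$ and split
\[
\mathcal{D}' = \mathcal{D}'_{\text{big}} \cup \mathcal{D}'_{\text{small}},
\]
where $\mathcal{D}'_{\text{big}} = \{J\in\mathcal{D}': \mu(J) \ge 2^{-m(\alpha_0-\sigma)}\}$ and $\mathcal{D}'_{\text{small}} = \mathcal{D}'\setminus \mathcal{D}'_{\text{big}}$. For the small part I would simply bound each term by $2^{-q_0 m(\alpha_0-\sigma)}$ and use the cardinality assumption together with the Legendre identity $\tau^*(\alpha_0) = q_0\alpha_0 - \tau(q_0)$:
\[
\sum_{J\in\mathcal{D}'_{\text{small}}} \mu(J)^{q_0} \le |\mathcal{D}'|\cdot 2^{-q_0 m(\alpha_0-\sigma)} \le 2^{(\tau^*(\alpha_0)-\kappa - q_0\alpha_0 + q_0\sigma)m} = 2^{-(\tau(q_0)+\kappa/2)m}.
\]

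For the big part, I would apply Lemma \ref{lem:Lq-sum-large-mass} directly with this choice of $\sigma$: it yields $\e_1 = \e_1(\sigma,q_0) = \e_1(\kappa,q_0) > 0$ such that, for $m$ large,
\[
\sum_{J\in\mathcal{D}'_{\text{big}}} \mu(J)^{q_0} \le \sum\{\mu(J)^{q_0} : J\in\mathcal{D}_m,\ \mu(J)\ge 2^{-m(\alpha_0-\sigma)}\} \le 2^{-(\tau(q_0)+\e_1)m}.
\]
Setting $\e = \tfrac{1}{2}\min(\kappa/2,\e_1)$ and taking $m$ large enough that the factor $2$ from combining the two estimates is absorbed gives the claimed bound.

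The only nontrivial ingredient is Lemma \ref{lem:Lq-sum-large-mass}, which is already at hand; everything else is a one-line Chebyshev-type counting argument combined with the Legendre identity. So I do not expect a genuine obstacle here — the main thing to be careful about is the choice of $\sigma$ in terms of $\kappa$ and $q_0$ (not too small, or the counting bound becomes useless; not too large, or the large-mass lemma gives a weaker gain), and the order of quantifiers in ``$m$ large enough'' to absorb the constants coming from both estimates.
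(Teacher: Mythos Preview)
Your proof is correct and essentially identical to the paper's: the same choice $\sigma=\kappa/(2q_0)$, the same split into large-mass intervals (handled by Lemma~\ref{lem:Lq-sum-large-mass}) and small-mass intervals (handled by the cardinality bound combined with the Legendre identity $\tau^*(\alpha_0)=q_0\alpha_0-\tau(q_0)$), and the same arithmetic $\kappa-q_0\sigma=\kappa/2$. There is nothing to add.
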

\begin{proof}
Let $\sigma=\kappa/(2 q_0)$. In light of Lemma \ref{lem:Lq-sum-large-mass}, we only need to worry about those $J$ with $\mu(J)\le 2^{-m(\alpha_0-\sigma)}$. But
\begin{align*}
\sum\{ \mu(J)^{q_0}: J\in\mathcal{D}',\mu(J) \le 2^{-m(\alpha_0-\sigma)} \} &\le 2^{(\tau^*(\alpha_0)-\kappa)m} 2^{-(q_0\alpha_0-q_0\sigma)m} \\
&=   2^{-(\kappa-q_0\sigma)m}2^{-\tau(q_0)m}.
\end{align*}
By our choice of $\sigma$, $\kappa-q_0\sigma=\kappa/2>0$, so this gives the claim.
\end{proof}

The results in this section so far hold for general measures. The following proposition, on the other hand, relies crucially on self-similarity. The second part was first proved in \cite{PeresSolomyak00}. Since the claim of Theorem \ref{thm:main-homogeneous} is not affected by rescaling and translating $\mu$ (from the point of view of the IFS, this amounts to doing these operations on the translation parameters $t_i$), from now on we assume that $\mu$ is supported on $[0,1)$.
\begin{prop}   \label{prop:Lq-over-small-set-is-small}
Let $q>1$ be such that $\alpha=\tau'(q)$ exists.
\begin{enumerate}[label={\upshape(\roman*).}]
\item Given $\kappa>0$, there is $\eta=\eta(\kappa,q)>0$ such that the following holds for all large enough $m$: for any $s\in\N$, $J\in\mathcal{D}_s$, if $\mathcal{D}'$ is a collection of intervals in $\mathcal{D}_{s+m}(J)$ with $|\mathcal{D}'|\le 2^{(\tau^*(\alpha)-\kappa)m}$, then
\[
\sum_{J\in\mathcal{D}'} \mu(J)^q  \le 2^{-(\tau(q)+\eta)m} \mu(2 I)^q.
\]
\item Given $\delta>0$, the following holds for all large enough $m$: for any $I\in\mathcal{D}_s$, $s\in\N$,
\[
\sum_{J\in\mathcal{D}_{s+m}(I)} \mu(J)^q \le 2^{-(\tau(q)-\delta)m} \mu(2 I)^q.
\]
\end{enumerate}
\end{prop}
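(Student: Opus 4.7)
The plan is to leverage the convolution form of self-similarity, $\mu = \mu_n * S_{\lambda^n}\mu$, to decompose $\mu|_I$ as a weighted average of affine copies of $\mu$, and thereby reduce both parts to the unconditional bounds already in place---Lemma \ref{lem:Lq-sum-over-small-set} for part (i), and the definition of $\tau(q)$ for part (ii).

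First I would choose $n=n(s)$ so that $\lambda\cdot 2^{-s-1} \le \lambda^n \le 2^{-s-1}$, and write $\mu = \mu_n * S_{\lambda^n}\mu$ as in \eqref{eq:convolution-structure}. With $T_y(x) = \lambda^n x + y$, this gives
\[
\mu(J) = \sum_y \mu_n(y)\,(T_y\mu)(J) \quad \text{for every Borel } J.
\]
Since $\supp(S_{\lambda^n}\mu)\subset[0,\lambda^n]$ and $\lambda^n \le |I|/2$, only atoms $y$ lying in the interval $(I_{\mathrm{left}}-\lambda^n, I_{\mathrm{right}})\subset 2I$ contribute to any $J\subset I$; denote this set of relevant atoms by $Y_I$. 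A direct push-forward computation from $\mu = \mu_n * S_{\lambda^n}\mu$ shows $Z:=\mu_n(Y_I) \le \mu(Y_I+[0,\lambda^n]) \le \mu(2I)$.

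Next, applying Jensen's inequality to $x\mapsto x^q$ with the probability weights $\mu_n(y)/Z$ gives
\[
\mu(J)^q \le Z^{q-1}\sum_{y\in Y_I}\mu_n(y)\,(T_y\mu)(J)^q.
\]
Summing over $J\in\mathcal{D}'$ and changing variables via $(T_y\mu)(J)=\mu(T_y^{-1}J)$ yields
\[
\sum_{J\in\mathcal{D}'}\mu(J)^q \le Z^{q-1}\sum_{y\in Y_I}\mu_n(y)\sum_{J\in\mathcal{D}'}\mu(T_y^{-1}J)^q.
\]
For fixed $y$, the intervals $\{T_y^{-1}J:J\in\mathcal{D}'\}$ are pairwise disjoint of length $\Theta(2^{-m})$ inside an ambient interval of length $O(1)$, so Lemma \ref{lem:Holder} supplies a family $\mathcal{D}''(y)\subset\mathcal{D}_m$ with $|\mathcal{D}''(y)|\le C_\lambda|\mathcal{D}'|$ satisfying $\sum_{J\in\mathcal{D}'}\mu(T_y^{-1}J)^q \le O_q(1)\sum_{J''\in\mathcal{D}''(y)}\mu(J'')^q$.

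For part (i), the hypothesis $|\mathcal{D}'|\le 2^{(\tau^*(\alpha)-\kappa)m}$ yields $|\mathcal{D}''(y)|\le 2^{(\tau^*(\alpha)-\kappa/2)m}$ for $m$ large, and Lemma \ref{lem:Lq-sum-over-small-set} applied with $\kappa/2$ in place of $\kappa$ gives $\sum_{J''\in\mathcal{D}''(y)}\mu(J'')^q \le 2^{-(\tau(q)+\eta_0)m}$ for some $\eta_0=\eta_0(\kappa,q)>0$, uniformly in $y$. Inserting this and using $\sum_{y\in Y_I}\mu_n(y)=Z\le\mu(2I)$ gives
\[
\sum_{J\in\mathcal{D}'}\mu(J)^q \le O_q(1)\,Z^q\,2^{-(\tau(q)+\eta_0)m} \le O_q(1)\,\mu(2I)^q\,2^{-(\tau(q)+\eta_0)m},
\]
and any $\eta<\eta_0$ absorbs the $O_q(1)$ for $m$ large. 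For part (ii), the same argument runs with the size hypothesis on $\mathcal{D}'$ simply dropped: one bounds $\sum_{J''\in\mathcal{D}_m}\mu(J'')^q = \|\mu^{(m)}\|_q^q \le 2^{-(\tau(q)-\delta/2)m}$ directly from the definition of $\tau(q)$, valid for $m$ large.

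The main technical obstacle---and the only point where self-similarity is truly used---is pinning down the comparison $Z \le \mu(2I)$ with the specific enlargement factor $2$ stated in the proposition. This requires choosing $n$ so that $\lambda^n\le|I|/2$ and then invoking $\mu=\mu_n*S_{\lambda^n}\mu$ together with $\supp(S_{\lambda^n}\mu)\subset [0,\lambda^n]$; the resulting inclusion $Y_I+[0,\lambda^n]\subset 2I$ is exact. All remaining steps are routine dyadic bookkeeping combined with the already-established multifractal lemma.
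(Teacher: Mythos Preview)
Your proof is correct and follows essentially the same approach as the paper: both arguments choose $n$ so that $\lambda^n$ is comparable to $|I|$, use the self-similarity relation $\mu=\mu_n*S_{\lambda^n}\mu$ to write $\mu|_I$ as a convex combination of affine images of $\mu$, apply convexity of $t\mapsto t^q$, and then reduce via Lemma~\ref{lem:Holder} to Lemma~\ref{lem:Lq-sum-over-small-set} (for part (i)) or directly to the definition of $\tau(q)$ (for part (ii)). Your justification of the bound $Z\le\mu(2I)$ via the inclusion $Y_I+[0,\lambda^n]\subset 2I$ is in fact slightly more transparent than the paper's phrasing, but the content is the same.
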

\begin{proof}
We prove (i) first. Let $n$ be the smallest integer such that $\lam^n < 2^{-s-2}$. Let $y_j$ be the atoms of $\mu_{n}$ such that $[y_j,y_j+\lam^{n}]\cap I\neq \varnothing$, let $\wt{p}_j$ be their respective masses, and write
\[
\mu_{n,I} = \sum_j \wt{p}_j \delta_{y_j}.
\]
Then the support of $\mu_{n,I}$ is contained in the $\lam^n$-neighborhood of $I$. Moreover, since $\delta_z * S_{\lam^n}\mu$ is supported on $[z,z+\lam^n]$, as we assumed that $\mu$ is supported on $[0,1]$ , it follows from the self-similarity relation $\mu = \mu_{n}* S_{\lam^n}\mu$ and the definition of $\mu_{n,I}$ that  $\mu|_I =(\mu_{n,I}*S_{\lam^n}\mu)|_I$. Write
\[
\wt{p} = \|\mu_{n,I}\|_1 = \sum_j \wt{p}_j \le \mu(2 I),
\]
using that the support of $\mu_{n}$ is contained in the $\lam^n$-neighborhood of the support of $\mu$, and that $4 \lam^n \le 2^{-s}$.

We can then estimate
\begin{align*}
\sum_{J\in\mathcal{D}'}  \mu(J)^q &= \sum_{J\in\mathcal{D}'} \left( \sum_j \wt{p}_j \delta_{y_j} * S_{\lam^n}\mu(J)  \right)^q\\
&= \sum_{J\in\mathcal{D}'} \left( \sum_j \wt{p}_j \mu(\lam^{-n}(J-y_j)) \right)^q\\
&\le \sum_{J\in\mathcal{D}'} \wt{p}^{q-1} \sum_j \wt{p}_j \, \mu(\lam^{-n}(J-y_j))^q\\
&= \wt{p}^{q-1}\sum_j \wt{p}_j \sum_{J\in\mathcal{D}'} \mu(\lam^{-n}(J-y_j))^q,
\end{align*}
where we used the convexity of $t^q$ in the third line. Now for each fixed $j$, each interval $\lam^{-n}(J-y_j)$ with $J\in\mathcal{D}'$ can be covered by $O_{\lam}(1)$ intervals in $\mathcal{D}_m$, and reciprocally each interval in $\mathcal{D}_m$ hits at most $2$ intervals among the $\lam^{-n}(J-y_j)$. We deduce from Lemmas \ref{lem:Holder} and \ref{lem:Lq-sum-over-small-set} that, still for a fixed $j$,
\[
 \sum_{J\in\mathcal{D}'} \mu(\lam^{-n}(J-y_j))^q \le O_{\lam,q}(1) 2^{-(\tau(q)+\e)m},
\]
provided $m$ is taken large enough, where $\e=\e(\kappa,q)>0$ is given by Lemma \ref{lem:Lq-sum-over-small-set}. Combining the last three displayed equations yields the first claim with $\eta=\e/2$.

The second claim follows in the same way, adding over $\mathcal{D}_{s+m}(I)$ instead of $\mathcal{D}'$.
\end{proof}

\subsection{Proof of Theorem \ref{thm:conv-with-ssm-flattens}}

\label{subsec:conv-ssm-flattens}

In this section we prove Theorem \ref{thm:conv-with-ssm-flattens}. A similar result, with smoothness measured by entropy rather than $L^q$ norms, was proved by Hochman in \cite[Corollary 5.5]{Hochman14}, using his inverse theorem for the entropy of convolutions. In Hochman's approach, a crucial property of self-similar measures is that their entropy is roughly constant at most scales and locations, a property that Hochman termed \emph{uniform entropy dimension}, see \cite[Definition 5.1 and Proposition 5.2]{Hochman14} for precise details. Unfortunately,  there is no useful analog of the notion of uniform entropy dimension for $L^q$ norms. One of the key differences is that nearly all of the $L^q$ norm may be (and often is) captured by sets of extremely small measure; while sets of small measure also have small entropy. Instead, we will use the regularity of the multifractal spectrum established in the previous section in the following manner: if the flattening claimed in Theorem \ref{thm:conv-with-ssm-flattens} does not hold, then the inverse theorem provides a regular set $A$ which captures much of the $L^q$ norm of $\mu$. The upper bound on $\|\rho\|_q$, together with (5)--(6) in the inverse theorem imply that $A$ has nearly full branching for a positive proportion of $2^{D}$-scales, so it must have substantially less than average branching also on a positive proportion of scales. On the other hand, we will call upon the lemmas from the previous section to show that, in fact, $A$ must have nearly constant branching on nearly all scales (this is the part that uses the differentiability of $\tau$ at $q$), obtaining the desired contradiction.

\begin{proof}[Proof of Theorem \ref{thm:conv-with-ssm-flattens}]
Suppose $\rho$ is a $2^{-m}$-measure with $\|\rho\|_q^{q'} \le 2^{-\sigma m}$. In the course of the proof, we will choose many numbers which ultimately depend on $\sigma$ and $q$ only. To ensure that there is no circularity in their definitions, we indicate their dependencies: $\alpha=\alpha(q)$, $\kappa=\kappa(\alpha,\sigma)$, $\gamma=\gamma(q,\alpha,\kappa)$, $\delta'=\delta'(\alpha,\sigma,\kappa)$,   $\eta=\eta(q,\kappa)$, $\delta=\delta(q,\delta',\gamma,\eta)$, $\xi=\xi(q,\delta',\eta,\gamma)$,  $D_0=D_0(q,\sigma,\delta)$, $D=D(q,\delta,D_0)$, $\e=\e(q,\delta,D_0)$. Moreover, at different parts of the proof we will require $\delta',\delta,\xi$ to be smaller than certain (positive) functions of the parameters they depend on; in particular, all of the requirements can be satisfied simultaneously.

Finally, $m$ will be taken large enough in terms of all the previous parameters (hence ultimately in terms of $q$ and $\sigma$).

Write $\alpha=\tau'(q)$, and define $\kappa$ as
\begin{equation} \label{eq:def-kappa}
\kappa = (1-\tau^*(\alpha))\sigma/4.
\end{equation}
Then $\kappa>0$ thanks to Lemma \ref{lem:f-alpha-smaller-than-one}, and the assumption $\tau(q)<q-1$.

We apply Proposition \ref{prop:Lq-over-small-set-is-small} to obtain a sufficiently large $D_0$  (in terms of $\delta,\sigma,q$, with $\delta$ yet to be specified) such that
\begin{enumerate}
\item[(A)] For any $D'\ge D_0-2$, any $I\in\mathcal{D}_{s'}$, $s'\in\N$, and any subset $\mathcal{D}'\subset \mathcal{D}_{s'+D'}(J)$ with $|\mathcal{D}'|\le 2^{(\tau^*(\alpha)-\kappa)D'}$,
\[
\sum_{J'\in\mathcal{D}'} \mu(J')^q \le 2^{-(\tau(q)+\eta)D'}\mu(2J)^q,
\]
where $\eta$ depends on $\kappa$ and $q$, hence on $\sigma,q$ only.
\item[(B)] For any $D'\ge D_0-2$ and any $J\in\mathcal{D}_{s'}$, $s'\in\N$,
\[
\sum_{J'\in\mathcal{D}_{s'+D'}(J)} \mu(J')^q \le 2^{-(\tau(q)-\delta)D'}\mu(2J)^q.
\]
\item[(C)] $1/D_0 <\delta$.
\end{enumerate}

Let $\e>0,D\in\N$ be the numbers given by Theorem  \ref{thm:inverse-thm} applied to $\delta, D_0$ and $q$. For the sake of contradiction, suppose
\[
\| \rho*\mu^{(m)}\|_q \ge 2^{-\e m} \|\mu^{(m)}\|_q.
\]
We will derive a contradiction from this provided $m=\ell D$ is large enough (if $m$ is not of the form $\ell D$, we apply the argument to $\lfloor m/D\rfloor D$ instead; we omit the details). We apply Theorem \ref{thm:inverse-thm} to $\rho$ and $\mu^{(m)}$ to obtain (assuming $m$ is large enough) a set $A\subset\supp(\mu^{(m)})$ as in the theorem, with corresponding branching numbers $R'_s$. Since translating $\rho$ and $\mu^{(m)}$ does not affect their norms or the norm of their convolution, we assume for simplicity that the numbers $k_A, k_B$  are both $0$.

The key to the proof is to show, using the structure of $A$ provided by Theorem \ref{thm:inverse-thm}, that
\begin{equation} \label{eq:small-branching-at-many-scales}
|\{ s\in [\ell]: R'_s \le 2^{(\tau^*(\alpha)-\kappa)D} \}| \ge \gamma \ell,
\end{equation}
where $\gamma>0$ depends on $q,\alpha$ and $\kappa$ only (and $\kappa$ is given by \eqref{eq:def-kappa}). We first show how to complete the proof assuming this. Consider the sequence
\[
L_s = -\log\sum_{J\in\mathcal{D}_{s D}(A)} \mu(J)^q.
\]
By (B) applied with $s'=sD+2$ and $D'=D-2$,
\[
L_{s+1} \ge (\tau(q)-\delta)(D-2) -\log \sum_{J\in \mathcal{D}_{sD+2}(A)} \mu(2J)^q.
\]
But if $J\in \mathcal{D}_{sD+2}(A)$, then $2J$ is contained in a single interval in $\mathcal{D}_{sD}(A)$ by property \ref{A4} from Theorem \ref{thm:inverse-thm}, and conversely $J'\in\mathcal{D}_{sD}(A)$ hits at most two intervals $2J$, $J\in\mathcal{D}_{sD+2}(A)$. We deduce that
\[
L_{s+1} \ge L_s +(\tau(q)-\delta)(D-2)-1
\]
for all $s\in[\ell]$. Likewise, by (A),
\[
L_{s+1} \ge L_s + (\tau(q)+\eta)(D-2)-1,
\]
whenever $R'_s \le  2^{(\tau^*(\alpha)-\kappa)D}$. Recall that $\eta$ depends on $q,\kappa$. In light of \eqref{eq:small-branching-at-many-scales}, and using also (C), we have
\begin{align*}
L_{\ell} &\ge (\tau(q)+\eta)\gamma \ell (D-2) + (\tau(q)-\delta)(1-\gamma)\ell (D-2)-\ell\\
 &\ge (\tau(q)+\eta\gamma-\delta(1-\gamma))m - 2\delta (\tau(q)+\eta) m -\delta m.
\end{align*}

Hence, by choosing $\delta$ small enough in terms of $\tau(q),\gamma$ and $\eta$ we can ensure that, for $m$ large enough,
\[
L_{\ell} = -\log\|\mu^{(m)}|_A\|_q^q \ge (\tau(q)+\eta\gamma/2)m.
\]
On the other hand, by \ref{A1} in Theorem \ref{thm:inverse-thm}, if $\xi>0$ is a small number to be fixed later, then (always assuming $m$ is large enough)
\[
\|\mu^{(m)}|_A\|_q^q \ge 2^{-q\delta m}\|\mu^{(m)}\|_q^q \ge 2^{-q\delta m} 2^{-(\tau(q)+\xi)m}.
\]
From the last two displayed equations,
\[
\eta\gamma/2 \le q\delta +\xi.
\]
Recall that $\eta=\eta(\kappa,q),\gamma=\gamma(q,\alpha,\kappa)$ is yet to be specified, while $\delta$ so far was taken small enough in terms of $\tau(q),\gamma$ and $\eta$, and no conditions have been yet imposed on $\xi$. By ensuring $q\delta <\eta\gamma/8$ and $\xi\le \eta\gamma/8$ we reach a contradiction, as desired.

It remains to establish \eqref{eq:small-branching-at-many-scales}. The idea is very simple: Theorem \ref{thm:inverse-thm} (together with the assumption that $\|\rho\|_q^{q'}\le 2^{-\sigma m}$) imply that $A$ has ``nearly full branching'' on a positive proportion of scales. On the other hand, Lemma \ref{lem:size-set-A-in-terms-of-f-alpha} says the size of $A$ is at most roughly $2^{\tau^*(\alpha)m}\ll 2^m$ (by Lemma \ref{lem:f-alpha-smaller-than-one}), so there must be a positive proportion of scales on which the average $2^D$-adic branching is far smaller than $2^{\tau^*(\alpha)D}$, which is what \eqref{eq:small-branching-at-many-scales} says.

We proceed to the details. Using \ref{A1}, \ref{A2} in Theorem \ref{thm:inverse-thm}, we get that (for $m\gg_\delta 1$) there is $\wt{\alpha}>0$ such that $\mu^{(m)}(a)\in [2^{-\wt{\alpha} m},2^{1-\wt{\alpha} m}]$ for all $a\in A$, and
\begin{equation*} 
\sum_{J\in\mathcal{D}_m(A)} \mu(J)^q \ge 2^{-q\delta m} \sum_{J\in\mathcal{D}_m}\mu(J)^q \ge  2^{-(\tau(q)+ q\delta+\xi)m}.
\end{equation*}
We let $\delta\le \delta'$  and $\xi$ be small enough in terms of $\delta'$ and $q$ that, invoking Lemma \ref{lem:size-set-A-in-terms-of-f-alpha},
\begin{equation} \label{eq:size-A-upper-bound}
|A| \le 2^{(\tau^*(\alpha)+\delta')m}.
\end{equation}

Let $\mathcal{S}'=[\ell]\setminus \mathcal{S}$, where $\mathcal{S} = \{ s: R'_s \ge 2^{(1-\delta)D}\}$. Using \ref{A3} in Theorem \ref{thm:inverse-thm}, we see that
\begin{equation} \label{eq:size-A-lower-bound}
|A| = \prod_{s=0}^{\ell-1} R'_s \ge  2^{(1-\delta)D|\mathcal{S}|} \prod_{s\in\mathcal{S}'} R'_s.
\end{equation}
Let $m_1=D|\mathcal{S}|$, $m_2=D|\mathcal{S}'|=m-m_1$. Combining \eqref{eq:size-A-upper-bound} and \eqref{eq:size-A-lower-bound}, and using that $\delta\le \delta'$, we deduce
\begin{equation} \label{eq:upper-bound-Rs-S-prime}
\prod_{s\in\mathcal{S}'} R'_s \le 2^{-(1-\delta)m_1} 2^{(\tau^*(\alpha)+\delta')m} \le 2^{-(1-\tau^*(\alpha)-2\delta')m_1} 2^{(\tau^*(\alpha)+\delta')m_2}.
\end{equation}
Note that $1-\tau^*(\alpha)>0$ by Lemma \ref{lem:f-alpha-smaller-than-one}. At this point we take $\delta'$ small enough that $1-\tau^*(\alpha)-2\delta'>0$.
Using \ref{inverse6} in Theorem \ref{thm:inverse-thm}, and the assumption $\|\rho\|_q^{q'}\le 2^{-\sigma m}$, we further estimate
\begin{equation} \label{eq:bound-total-branching-scales}
(\sigma - \delta)m  \le m_1 \le \left( (\tau(q)+\xi)/(q-1)+\delta\right)m.
\end{equation}
We can plug in the left inequality (together with $m_2\le m$) into \eqref{eq:upper-bound-Rs-S-prime}, to obtain the key estimate
\[
\log\prod_{s\in\mathcal{S}'} R'_s \le \left(\tau^*(\alpha)+ \delta' -(1-\tau^*(\alpha)-2\delta')(\sigma-\delta)\right)m_2.
\]
Recalling \eqref{eq:def-kappa}, this shows that by making $\delta'$ (hence also $\delta\le\delta'$) small enough in terms of $\alpha,\sigma,\kappa$, we have
\[
\log\prod_{s\in\mathcal{S}'} R'_s \le (\tau^*(\alpha)-2\kappa) m_2.
\]
Let $\mathcal{S}_1 = \{ s\in\mathcal{S}': \log R'_s \le (\tau^*(\alpha)-\kappa)D\}$. Recall that our goal is to show \eqref{eq:small-branching-at-many-scales}, i.e. $|\mathcal{S}_1|\ge \gamma(q,\alpha,\kappa)\ell$. We have
\[
D|\mathcal{S}'\setminus\mathcal{S}_1| \le \frac{1}{\tau^*(\alpha)-\kappa}\sum_{s\in\mathcal{S}'\setminus\mathcal{S}_1} \log R'_s \le \frac{\tau^*(\alpha)-2\kappa}{\tau^*(\alpha)-\kappa} D|\mathcal{S}'|,
\]
so that, using the right-most inequality in \eqref{eq:bound-total-branching-scales}, and recalling that $D|\mathcal{S}'|=m-m_1$,
\[
D|\mathcal{S}_1| \ge \frac{\kappa (m-m_1)}{\tau^*(\alpha)-\kappa} \ge \left(\frac{\kappa(1-(\tau(q)+\xi)/(q-1)-\delta)}{\tau^*(\alpha)-\kappa}\right)  m.
\]
By ensuring that $\delta,\xi$ are small enough in terms of $q$, the right-hand side above can be bounded below by
\[
\left(\frac{\kappa(1-\tau(q)/(q-1))/2}{\tau^*(\alpha)-\kappa}\right) m,
\]
confirming that \eqref{eq:small-branching-at-many-scales} holds with $\gamma=\gamma(q,\alpha,\kappa)$.
\end{proof}

\subsection{Proof of Theorem \ref{thm:main-homogeneous}}

Theorem \ref{thm:main-homogeneous} will be an easy consequence of the following proposition,  which relies on Theorem \ref{thm:conv-with-ssm-flattens}. It is an analog of \cite[Theorem 1.4]{Hochman14}, and we follow a similar outline. We emphasize that exponential separation is not required for the validity of the proposition.
\begin{prop} \label{prop:ssm-scale-Rm-norm}
Let $q\in (1,\infty)$ be such that $\tau$ is differentiable at $q$ and $\tau(q)<q-1$. Fix $R\in\N$. Then
\[
\lim_{n\to\infty} \frac{\log \|\mu_{n}^{(Rm(n))}\|_q^q}{n\log\lam} = \tau(q),
\]
where $m(n)$ is the smallest integer with $2^{-m(n)}\le \lam^n$.
\end{prop}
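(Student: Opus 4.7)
The plan is to prove the proposition by combining a straightforward upper bound with a lower bound obtained by contradiction, using Theorem \ref{thm:conv-with-ssm-flattens}. Since $m=m(n)\sim n\log_2(1/\lambda)$, so that $n\log\lambda\sim-m$, the conclusion is equivalent to $-\log\|\mu_n^{(Rm)}\|_q^q/m\to\tau(q)$. The lower bound $\liminf\ge\tau(q)$ on $-\log\|\mu_n^{(Rm)}\|_q^q/m$ is immediate: refinement cannot increase the $L^q$ norm, so $\|\mu_n^{(Rm)}\|_q^q\le\|\mu_n^{(m)}\|_q^q=\Theta_q(\|\mu^{(m)}\|_q^q)$ by Lemma \ref{lem:comparison-mu-m-mu}, and $\|\mu^{(m)}\|_q^q\le 2^{-m(\tau(q)-o(1))}$ by the definition of $\tau$.

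For the matching upper bound $\limsup\le\tau(q)$, I argue by contradiction: suppose $\|\mu_n^{(Rm)}\|_q^q\le 2^{-m(\tau(q)+\delta)}$ for some $\delta>0$ and arbitrarily large $n$. Decomposing over $\mathcal{D}_m$ and rescaling each $J$ to $[0,1)$ by the factor $2^m\sim\lambda^{-n}$,
\[
\|\mu_n^{(Rm)}\|_q^q = \sum_{J\in\mathcal{D}_m}\mu_n(J)^q\,\|\rho_J^{((R-1)m)}\|_q^q,
\]
where $\rho_J=S_{\lambda^{-n}}\wt\rho_J$ and $\wt\rho_J=\mu_n|_J/\mu_n(J)$. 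Dividing by $\sum_J\mu_n(J)^q=\|\mu_n^{(m)}\|_q^q=2^{-m\tau(q)+o(m)}$, the $\mu_n(J)^q$-weighted average of $\|\rho_J^{((R-1)m)}\|_q^q$ is at most $2^{-m\delta+o(m)}$. Markov's inequality then yields a subfamily $\mathcal{G}\subset\mathcal{D}_m$ carrying $(1-o(1))$ of that weight on which $\|\rho_J^{(Rm)}\|_q^{q'}\le 2^{-m\delta/(2(q-1))}$. Setting $\sigma=\delta/(2R(q-1))$ and applying Theorem \ref{thm:conv-with-ssm-flattens} at scale $Rm$, one obtains $\|\rho_J^{(Rm)}*\mu^{(Rm)}\|_q^q\le 2^{-\e Rm}\|\mu^{(Rm)}\|_q^q$ for every $J\in\mathcal{G}$.

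The parallel decomposition for $\mu^{((R+1)m)}$, obtained from $\mu=\mu_n*S_{\lambda^n}\mu$ via the bounded-overlap Lemmas \ref{lem:L-q-norm-almost-disj-supports} and \ref{lem:discr-norm-conv-equivalence} (since $\lambda^n\sim 2^{-m}$, the supports of $(\mu_n|_J)*S_{\lambda^n}\mu$ overlap only $O(1)$ times), reads
\[
\|\mu^{((R+1)m)}\|_q^q = \Theta_q(1)\sum_{J\in\mathcal{D}_m}\mu(J)^q\,\|\rho_J^{(Rm)}*\mu^{(Rm)}\|_q^q.
\]
Splitting into $\mathcal{G}$ and its complement, using the flattening bound on $\mathcal{G}$ and the Young bound $\|\rho_J*\mu^{(Rm)}\|_q\le\|\mu^{(Rm)}\|_q$ off $\mathcal{G}$, and exploiting $\mu(J)=\Theta(\mu_n(J))$ at scale $m$ together with the smallness of the off-$\mathcal{G}$ mass, one obtains $\|\mu^{((R+1)m)}\|_q^q\le C\,2^{-\eta m}\,\|\mu^{(m)}\|_q^q\,\|\mu^{(Rm)}\|_q^q$ for $\eta=\min(\e R,\delta/2)>0$. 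The differentiability of $\tau$ at $q$, combined with the multifractal lemmas of Section \ref{subsec:multifractal} and submultiplicativity of the moment sums, gives $\|\mu^{(km)}\|_q^q=2^{-km\tau(q)+o(m)}$ for each $k\in\{1,R,R+1\}$, so the right-hand side equals $\Theta(2^{-\eta m}\|\mu^{((R+1)m)}\|_q^q)$, contradicting the previous bound for $m$ large. As $\delta>0$ was arbitrary, $\limsup\le\tau(q)$, completing the proof.

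The main obstacle is securing the two-sided asymptotic $\|\mu^{(N)}\|_q^q=2^{-N\tau(q)+o(N)}$ at the required scales $N\in\{m,Rm,(R+1)m\}$, rather than merely the one-sided $\liminf$ in the definition of $\tau$. This is a multifractal regularity property of homogeneous self-similar measures that uses differentiability of $\tau$ at $q$ in an essential way, and it is precisely why the proposition assumes this differentiability.
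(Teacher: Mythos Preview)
Your overall strategy is the contrapositive of the paper's direct argument and uses the same ingredients: the decomposition of $\mu=\mu_n*S_{\lambda^n}\mu$ over $\mathcal{D}_m$, the flattening Theorem~\ref{thm:conv-with-ssm-flattens}, and Young's inequality on the complementary family. There are, however, two concrete issues.

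First, your ``parallel decomposition'' is written with weights $\mu(J)^q$, but the identity $\mu=\mu_n*S_{\lambda^n}\mu$ produces weights $\mu_n(J)^q$ (exactly as in the paper's inequality \eqref{eq:estimate-mu-Rp1}). You then try to bridge the two by asserting $\mu(J)=\Theta(\mu_n(J))$ pointwise at scale $m$; this is false in general. Convolving with $S_{\lambda^n}\mu$ only gives $\mu(J)\le\mu_n(O(1)\cdot J)$ and $\mu_n(J)\le\mu(O(1)\cdot J)$, which yields $\sum_J\mu(J)^q=\Theta_q\big(\sum_J\mu_n(J)^q\big)$ via Lemma~\ref{lem:Holder}, but not a per-interval comparison. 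The fix is simply to write the decomposition with $\mu_n(J)^q$ from the start; then your Markov step already controls the off-$\mathcal{G}$ weight in exactly the form you need, and no pointwise comparison is required. (With this correction your argument for $R\ge 2$ goes through; the case $R=1$ is immediate from Lemma~\ref{lem:comparison-mu-m-mu}.)

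Second, the two-sided asymptotic $\|\mu^{(N)}\|_q^q=2^{-N\tau(q)+o(N)}$ that you flag as the ``main obstacle'' does not come from differentiability of $\tau$ or the multifractal lemmas of \S\ref{subsec:multifractal}. For self-similar measures it follows from approximate submultiplicativity of the moment sums (essentially Proposition~\ref{prop:Lq-over-small-set-is-small}(ii), or \cite{PeresSolomyak00}) together with Fekete's lemma; differentiability plays no role at this step. The paper uses the same fact implicitly when it asserts $2^{-(\tau(q)+\e)Rm}\|\mu^{(m)}\|_q^q\le 2^{-\e m/2}\|\mu^{((R+1)m)}\|_q^q$ and when it lower-bounds $\|\mu^{((R+1)m)}\|_q^q/\|\mu^{(Rm)}\|_q^q$.
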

\begin{proof}
Fix $n\in\N$. We write $m=m(n)$ for simplicity, and allow all implicit constants to depend on $q$  only. Using the self-similarity relation $\mu=\mu_n * S_{\lambda^n}\mu$ and Lemma \ref{lem:discr-norm-conv-equivalence}, we get
\begin{align*}
\|\mu^{((R+1)m)}\|_q^q &\le O(1) \| \mu_{n}^{((R+1)m)} * (S_{\lam^n} \mu)^{((R+1)m)} \|_q^q \\
&= O(1) \big\|  \sum_{J\in\mathcal{D}_m} \mu_{n}(J) (\mu_{n})_J^{((R+1)m)} * (S_{\lam^n} \mu)^{((R+1)m)} \big\|_q^q.
\end{align*}
Here $(\mu_{n})_J  = \mu_{n}|_J / \mu_{n}(J)$ is the normalized restriction of $\mu_{n}$ to $J$ (note that we are only summing over $J$ such that $\mu_{n}(J)>0$). Since the measures $(\mu_{n})_J^{((R+1)m)} * (S_{\lam^n} \mu)^{((R+1)m)}$ are supported on $J+[0,\lam^n]$, the support of each of them hits the supports of $O(1)$ others. We can then apply Lemma \ref{lem:L-q-norm-almost-disj-supports} to obtain
\[
\|\mu^{((R+1)m)}\|_q^q  \le O(1)\sum_{J\in\mathcal{D}_m} \mu_{n}(J)^q  \|(\mu_{n})_J^{((R+1)m)} * (S_{\lam^n} \mu)^{((R+1)m)} \|_q^q
\]
Let $\rho_{J} = S_{\lam^{-n}}(\mu_{n})_J$ (we suppress the dependence on $n$ from the notation, but keep it in mind). Note that $S_a(\eta)*S_a(\eta')=S_a(\eta*\eta')$ for any $a>0$ and measures $\eta,\eta'$. It follows from Lemmas \ref{lem:Holder} and \ref{lem:discr-norm-conv-equivalence}  that
\[
 \|(\mu_{n})_J^{((R+1)m)} * (S_{\lam^n} \mu)^{((R+1)m)} \|_q^q \le O(1) \| \rho_{J}^{(Rm)} * \mu^{(Rm)} \|_q^q,
\]
so that, combining the last two displayed formulas,
\begin{equation} \label{eq:estimate-mu-Rp1}
\|\mu^{((R+1)m)}\|_q^q  \le O(1) \sum_{J\in\mathcal{D}_m} \mu_{n}(J)^q \| \rho_{J}^{(Rm)} * \mu^{(Rm)} \|_q^q.
\end{equation}
On the other hand,  using Lemma \ref{lem:Holder} again,
\begin{equation} \label{eq:estimate-mu-m-Rp1}
\|\mu_{n}^{((R+1)m)}\|_q^q = \sum_{J\in\mathcal{D}_m} \mu_{n}(J)^q \|(\mu_{n})_J^{((R+1)m)}\|_q^q \ge  \Omega(1) \sum_{J\in\mathcal{D}_m} \mu_{n}(J)^q \| \rho_{J}^{(Rm)} \|_q^q.
\end{equation}

Fix $\sigma>0$, and let $\mathcal{D}' = \{ J\in\mathcal{D}_{m}: \| \rho_{J}^{(Rm)}\|_q^q \le 2^{-\sigma m}\}$. According to Theorem \ref{thm:conv-with-ssm-flattens}, there is $\e=\e(\sigma,q)>0$ such that, if $n$ is taken large enough, then
\[
J\in\mathcal{D}' \quad\Longrightarrow\quad\|\rho_{J}^{(Rm)} * \mu^{(Rm)}\|_q^q \le 2^{-(\tau(q)+\e)Rm}.
\]
Applying this to \eqref{eq:estimate-mu-Rp1},  we get
\begin{align*}
\|\mu^{((R+1)m)}\|_q^q  &\le O(1) 2^{-(\tau(q)+\e)Rm} \sum_{J\in\mathcal{D}'} \mu_{n}(J)^q  + O(1) \sum_{J\notin \mathcal{D}'}  \mu_{n}(J)^q \|  \mu^{(Rm)} \|_q^q \\
&\le O(1) 2^{-(\tau(q)+\e)Rm} \|\mu^{(m)}\|_q^q  + O(1)\|  \mu^{(Rm)} \|_q^q \sum_{J\notin\mathcal{D}'} \mu_{n}(J)^q\\
\end{align*}
using Young's inequality in the first line, and Lemma \ref{lem:comparison-mu-m-mu} in the second. On the other hand,
\[
 2^{-(\tau(q)+\e)Rm} \|\mu^{(m)}\|_q^q \le 2^{-\e m/2} \|\mu^{((R+1)m)}\|_q^q
\]
if $n$ is large enough (depending on $R$). Inspecting the last two displayed equations, we deduce that if $n\gg_{\sigma} 1$, then
\[
\sum_{J\notin \mathcal{D}'}  \mu_{n}(J)^q \ge \Omega(1) \frac{\|\mu^{((R+1)m)}\|_q^q}{\|\mu^{(Rm)} \|_q^q } \ge 2^{-m(\tau(q)+\sigma)}.
\]
 Recalling \eqref{eq:estimate-mu-m-Rp1}, we conclude that
\begin{align*}
\|\mu_{n}^{((R+1)m)}\|_q^q  &\ge \Omega(1) \sum_{J\notin\mathcal{D}'} \mu_{n}(J)^q \| \rho_{J}^{(Rm)} \|_q^q \\
&\ge \Omega(1) 2^{-\sigma m}\sum_{J\notin\mathcal{D}'} \mu_{n}(J)^q \ge \Omega(1) 2^{-2\sigma m} 2^{-m \tau(q)}.
\end{align*}
The inequality $\|\mu_{n}^{((R+1)m)}\|_q^q \le \|\mu_{n}^{(m)}\|_q^q$ holds trivially, so that by Lemma \ref{lem:comparison-mu-m-mu}
\[
\|\mu_{n}^{((R+1)m)}\|_q^q  \le \|\mu_{n}^{(m)}\|_q^q \le 2^{\sigma m} 2^{-m \tau(q)},
\]
provided $n\gg_\sigma 1$. Since $\sigma>0$ was arbitrary and $2^{-m}=\Theta(\lam^n)$, this concludes the proof.
\end{proof}

We can now conclude the proof of Theorem \ref{thm:main-homogeneous}
\begin{proof}[Proof of Theorem \ref{thm:main-homogeneous}]
We continue to write $m=m(n)=\lceil n\log(1/\lam)\rceil$. To begin, we note that,  for any $q\in (1,\infty)$,
\begin{equation} \label{eq:main-thm-upper-bound-1}
\|\mu_{n}^{(m)}\|_q^q \ge \|\mu_{n}\|_q^q \ge  \|\Delta\|_q^{qn}.
\end{equation}
(The latter inequality is an equality if and only if there are no overlaps among the atoms of $\mu_{n}$.)  Since $\|\nu^{(m)}\|_q^{q'} \ge 2^{-m}$ for any probability measure $\nu$, it follows from \eqref{eq:main-thm-upper-bound-1} and Lemma \ref{lem:comparison-mu-m-mu} that
\[
D(q) \le \min(\sdim(\mu,q),1).
\]
Hence the proof will be completed if we can show that for each $q\in (1,\infty)$, either $\tau(q)\ge q-1$ (so that in fact $\tau(q)=q-1$) or
\begin{equation} \label{eq:formula-tau-q-want-to-prove}
\tau(q)= \log\|\Delta\|_q^q.
\end{equation}
Since $\tau(q)$ is concave, it is enough to prove this for all $q$ such that $\tau$ is differentiable at $q$. Hence, we fix $q$ such that $\tau(q)<q-1$ and $\tau$ is differentiable at $q$, and we set out to prove \eqref{eq:formula-tau-q-want-to-prove}.

By the exponential separation assumption, the atoms of $\mu_{n}$ are $\lam^{R n}$-separated for infinitely many $n$ and some $R\in\N$. We know from Proposition \ref{prop:ssm-scale-Rm-norm}  that
\begin{equation} \label{eq:tau-mu-x-equals-T}
\lim_{n\to\infty} \frac{\log \|\mu_{n}^{(Rm(n))}\|_q^q}{n\log\lam} = \tau(q).
\end{equation}
On the other hand, if $n$ is such that the atoms of $\mu_{n}$ are $\lam^{R n}$-separated then (since $\lam^{Rn}\ge 2^{-R m(n)}$)
\begin{equation} \label{eq:tau-mu-x-exp-separation}
\|\mu_{n}^{(Rm(n))}\|_q^q = \|\mu_{n}\|_q^q =  \| \Delta\|_q^{qn}.
\end{equation}
 Combining Equations \eqref{eq:tau-mu-x-equals-T} and \eqref{eq:tau-mu-x-exp-separation}, we conclude that \eqref{eq:formula-tau-q-want-to-prove} holds, finishing the proof.
\end{proof}

\subsection{About the proof of Theorem \ref{thm:main}}

In the proof of Theorem \ref{thm:main-homogeneous} the convolution structure played a crucial role. While a general self-similar measure does not have such a clean convolution structure, we can proceed as follows. Let $(\lam_i)_{i\in I}$ be the scaling factors of the IFS generating $\mu$ (there may be repetitions). Given $m$, let
\begin{align*}
\Omega_m &= \{ (j_1\ldots j_k): \lambda_{j_1}\cdots \lambda_{j_k} \le 2^{-m} < \lambda_{j_1}\cdots \lambda_{j_{k-1}}\},\\
\Lambda_m &= \{ \lambda_{j_1}\cdots \lambda_{j_k}: (j_1\ldots j_k)\in\Omega_m\}.
\end{align*}
One can then check, using self-similarity, that
\[
\mu = \sum_{\lam\in\Lambda_m} \mu_{\lam,m} * S_{\lam}\mu,
\]
where $\mu_{\lam,m}$ are certain purely atomic measures constructed from the translations of the maps $f_{j_1}\cdots f_{j_k}$ with $\lam_{j_1}\cdots \lam_{j_k}=\lam$. Thanks to the fact that $|\Lambda_m|$ is polynomial in $m$ (even though $|\Omega_m|$ is exponential in $m$), the proof given in the homogeneous case can be adapted with minor technical complications. We refer to \cite[\S 6.4]{Shmerkin19} for the details.

\section{Applications}

\label{sec:applications}

In this section we present several applications of Theorem \ref{thm:main-homogeneous}.

\subsection{Frostman exponents}
\label{subsect:appl-Frostman}

If $\mu$ is a finite measure on a metric space $X$, we say that $\mu$ has \emph{Frostman exponent} $s$ if $\mu(B(x,r)) \le C\,r^s$ for some $C>0$ and all $x\in X,r>0$. There is a very simple relation between $L^q$ dimensions for large $q$ and Frostman exponents:
\begin{lemma} \label{lem:Lq-dim-to-Frostman-exp}
Let $\mu\in\mathcal{P}$. If $D_\mu(q)> s$ for some $q\in (1,\infty)$, then there is $r_0>0$ such that
\[
\mu(B(x,r)) \le \, r^{(1-1/q)s} \text{ for all } x\in\R, r\in (0,r_0].
\]
\end{lemma}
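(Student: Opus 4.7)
The plan is to use the definition of $L^q$ dimension to obtain a uniform pointwise bound on the $\mu$-mass of dyadic intervals at all sufficiently small dyadic scales, and then transfer this bound to arbitrary balls by a standard covering argument.

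First I would pick an auxiliary exponent $s'$ with $D_\mu(q) > s' > s$ (taking $s' = (D_\mu(q)+s)/2$, say). By the definitions of $D_\mu$ and $\tau_\mu$, the inequality $D_\mu(q) > s'$ translates to $\tau_\mu(q) > (q-1)s'$. Hence there exists $m_0 \in \N$ such that, for every $m \ge m_0$,
\[
S_m(\mu,q) \;=\; \sum_{J\in\mathcal{D}_m}\mu(J)^q \;\le\; 2^{-(q-1)s'\, m}.
\]
Since each summand is bounded by the full sum, this forces $\mu(J)^q \le 2^{-(q-1)s' m}$, i.e.\ $\mu(J) \le 2^{-(1-1/q)s'\, m}$, for every individual $J\in\mathcal{D}_m$ and every $m\ge m_0$. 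This is the source of the exponent $1-1/q = 1/q'$ appearing in the conclusion.

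Next, given $r \in (0, 2^{-m_0}]$, I would let $m$ be the unique integer with $2^{-m}\le r < 2^{-m+1}$. Any ball $B(x,r)$ meets at most three intervals from $\mathcal{D}_m$, so
\[
\mu(B(x,r)) \;\le\; 3\cdot 2^{-(1-1/q)s'\, m} \;\le\; 3\, r^{(1-1/q)s'},
\]
using $2^{-m}\le r$ and $(1-1/q)s' > 0$. Since $s' > s$ and $(1-1/q)>0$, for $r$ small enough (say $r\le r_0 \le 2^{-m_0}$ with $r_0$ chosen so that $3\, r_0^{(1-1/q)(s'-s)}\le 1$) the factor $3$ is absorbed and we obtain $\mu(B(x,r))\le r^{(1-1/q)s}$, as required.

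I do not expect any genuine obstacle here: the argument is essentially definitional, and the only subtlety is being careful to introduce the slack exponent $s'$ so that the implicit constant $3$ from the covering can be absorbed into the power of $r$.
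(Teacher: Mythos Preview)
Your argument is correct and essentially identical to the paper's proof: choose $s'\in(s,D_\mu(q))$, bound each $\mu(J)^q$ by the full moment sum $S_m(\mu,q)\le 2^{-(q-1)s'm}$, cover balls by $O(1)$ dyadic intervals, and absorb the constant using the slack $s'-s$. One harmless slip: with $2^{-m}\le r<2^{-m+1}$ the ball $B(x,r)$ has diameter $2r<4\cdot 2^{-m}$ and may meet up to five intervals in $\mathcal{D}_m$ rather than three, but this only changes the constant you absorb.
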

\begin{proof}
If $D(\mu,q)>s$, then there is $s'>s$ such that for all large enough $m$ and each $J'\in\mathcal{D}_m$,
\[
\mu(J')^q \le \sum_{J\in\mathcal{D}_m} \mu(J)^q \le 2^{-m (q-1) s'}.
\]
Since any ball can be covered by $O(1)$ dyadic intervals of size smaller than the radius, we get that if $r$ is sufficiently small then
\[
\mu(B(x,r)) \le C\, r^{(1-1/q)s'},
\]
where $C$ is independent of $x$ and $r$. This gives the claim.
\end{proof}

Theorem \ref{thm:main-homogeneous} together with the previous lemma immediately yields the following corollary:
\begin{corollary} \label{cor:Frostman-exp}
Let $\mu$ be the self-similar measure associated to a homogeneous IFS $(\lambda x+t_i)_{i\in I}$ with exponential separation and the uniform probability weights $(1/|I|,\ldots,1/|I|)$. Then $\mu$ has Frostman exponent $s$ for every $s< \min(\log |I|/\log(1/\lambda),1)$.
\end{corollary}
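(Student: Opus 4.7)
The plan is to combine Theorem~\ref{thm:main-homogeneous} with Lemma~\ref{lem:Lq-dim-to-Frostman-exp} in a direct manner, after observing that the uniform weights collapse the $L^q$ similarity dimension to a quantity independent of $q$. Specifically, for $p_i = 1/|I|$ we have $\sum_{i\in I} p_i^q = |I|^{1-q}$, so
\[
\sdim(\mu,q) = \frac{\log |I|^{1-q}}{(q-1)\log \lambda} = \frac{\log |I|}{\log(1/\lambda)}
\]
for all $q>1$. Writing $s_0 = \min(\log|I|/\log(1/\lambda),\,1)$, Theorem~\ref{thm:main-homogeneous} gives $D_\mu(q) = s_0$ for every $q\in(1,\infty)$ — this is the one nontrivial input, but it is supplied to us in black-box form.

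Now fix $s < s_0$. I would choose an intermediate $s'$ with $s < s' < s_0$ and then pick $q$ large enough that $(1-1/q)\,s' \ge s$, i.e.\ $q \ge s'/(s'-s)$. Since $D_\mu(q) = s_0 > s'$, Lemma~\ref{lem:Lq-dim-to-Frostman-exp} applied with this $q$ and with $s'$ in place of the lemma's ``$s$'' furnishes an $r_0 > 0$ such that
\[
\mu(B(x,r)) \le r^{(1-1/q)s'} \le r^s \qquad \text{for all } x\in\R,\ r\in(0,r_0].
\]
For $r \ge r_0$ the trivial bound $\mu(B(x,r)) \le 1 \le r_0^{-s}\,r^s$ completes the Frostman inequality with constant $C = \max(1, r_0^{-s})$, valid for all $x$ and $r>0$.

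There is essentially no genuine obstacle here beyond invoking the two inputs correctly; the one technical point is the factor $(1-1/q) < 1$ that appears in the exponent in Lemma~\ref{lem:Lq-dim-to-Frostman-exp}. This loss is harmless precisely because Theorem~\ref{thm:main-homogeneous} provides the optimal value $s_0$ of the $L^q$ dimension simultaneously at \emph{every} $q > 1$, so we may take $q$ as large as needed to absorb the factor. This also highlights why the $L^q$ version of Hochman's theorem is genuinely useful for Frostman-type statements: a Hausdorff-dimension bound alone would not yield a uniform pointwise decay of $\mu$ on all balls.
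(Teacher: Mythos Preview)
Your proof is correct and follows exactly the approach indicated in the paper, which simply states that the corollary is immediate from Theorem~\ref{thm:main-homogeneous} together with Lemma~\ref{lem:Lq-dim-to-Frostman-exp}. You have just written out the details: the observation that uniform weights make $\sdim(\mu,q)$ independent of $q$, and the choice of large $q$ to absorb the factor $(1-1/q)$ from the lemma.
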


\subsection{Algebraic parameters}
\label{subsec:appl-algebraic}

We now discuss the special case in which the IFS has algebraic parameters, that is, both the contraction ratio $\lambda$ and the translations $t_i$ are algebraic numbers. Hochman \cite[Corollary 1.5]{Hochman14} proved that the overlaps conjecture holds in this case and, in the same way, we extend this to the $L^q$-dimension version of the overlaps conjecture. The deduction is based on the following classical lemma; see \cite[Lemma 6.30]{Hochman17} for a proof.

\begin{lemma} \label{lem:algebraic}
Given algebraic numbers (over $\mathbb{Q}$) $\alpha_1,\ldots,\alpha_k$ and a positive integer $h$, there exists $\delta>0$ such that the following holds: if $P\in\Z[x_1,\ldots,x_k]$ is a polynomial of degree $n$, all of whose coefficients are at most $h$ in modulus, then either $P(\alpha_1,\ldots,\alpha_k)=0$ or
\[
|P(\alpha_1,\ldots,\alpha_k)| \ge \delta^n.
\]
\end{lemma}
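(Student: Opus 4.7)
The plan is to exploit the classical Liouville/Garsia-type observation that nonzero algebraic integers have norm of absolute value at least one, together with a crude triangle-inequality bound on each Galois conjugate of $P(\alpha_1,\ldots,\alpha_k)$. First I would reduce to the algebraic-integer setting: choose a positive integer $D$ such that $D\alpha_i$ is an algebraic integer for every $i$ (take $D$ to be the product of the leading coefficients of the minimal polynomials of the $\alpha_i$). Then for any $P\in\Z[x_1,\ldots,x_k]$ of total degree $n$, writing each monomial $c_a \prod \alpha_i^{a_i}$ (with $|a|\le n$) multiplied by $D^{n-|a|}$, the element
\[
\gamma := D^n P(\alpha_1,\ldots,\alpha_k)
\]
lies in $\Z[D\alpha_1,\ldots,D\alpha_k]$ and is therefore an algebraic integer in the number field $K=\Q(\alpha_1,\ldots,\alpha_k)$.

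Let $d=[K:\Q]$ and let $\sigma_1=\mathrm{id},\sigma_2,\ldots,\sigma_d$ be the distinct embeddings of $K$ into $\mathbb{C}$. If $P(\alpha_1,\ldots,\alpha_k)\ne 0$, then $\gamma\ne 0$ and $N_{K/\Q}(\gamma)=\prod_{j=1}^d \sigma_j(\gamma)$ is a nonzero rational integer, hence $|N_{K/\Q}(\gamma)|\ge 1$. Setting $M=\max_{i,j}|\sigma_j(\alpha_i)|$, and noting that the number of monomials of total degree at most $n$ in $k$ variables is at most $(n+1)^k$, the triangle inequality gives, for every embedding $\sigma_j$,
\[
|\sigma_j(\gamma)| \le D^n\cdot h\cdot (n+1)^k M^n = h(n+1)^k (DM)^n.
\]

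Separating the identity factor in the norm and using $|N_{K/\Q}(\gamma)|\ge 1$, I would deduce
\[
1 \le |\gamma|\prod_{j\ge 2}|\sigma_j(\gamma)| \le |\gamma|\cdot\bigl(h(n+1)^k (DM)^n\bigr)^{d-1},
\]
and hence
\[
|P(\alpha_1,\ldots,\alpha_k)| = D^{-n}|\gamma| \ge h^{-(d-1)}(n+1)^{-k(d-1)}\bigl(D\cdot (DM)^{d-1}\bigr)^{-n}.
\]
The right-hand side is a fixed polynomial factor in $n$ multiplied by an exponential in $n$, with base depending only on $\alpha_1,\ldots,\alpha_k,h$. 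To finish, I would absorb the polynomial factor into a slightly smaller exponential: for any base $\delta_0>0$ there is a constant $C>0$ with $(n+1)^{-k(d-1)}\ge C\delta_0^n$ for all $n\ge 1$, so choosing $\delta$ strictly smaller than $\bigl(D\cdot(DM)^{d-1}\bigr)^{-1}$ (and handling the constants $h^{-(d-1)}$ and $C$ by further decreasing $\delta$) yields $|P(\alpha_1,\ldots,\alpha_k)|\ge\delta^n$ uniformly in $n$. There is no real obstacle here; the only mildly technical point is verifying that the multiplication by $D^{n-|a|}$ keeps all coefficients integral (which it does, since $n-|a|\ge 0$), and that the polynomial overhead in $n$ can be swallowed by an exponential, which is standard.
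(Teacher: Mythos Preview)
Your argument is correct and is exactly the classical Liouville--Garsia norm argument. The paper does not give its own proof of this lemma; it simply cites \cite[Lemma 6.30]{Hochman17}, where the same norm-based argument is carried out. So there is nothing to compare: you have supplied the standard proof that the paper outsources.

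One cosmetic point: in your conjugate bound $|\sigma_j(\gamma)|\le D^n h(n+1)^k M^n$, the factor $M^n$ should be $\max(1,M)^n$, since lower-degree monomials contribute $M^{|a|}$ with $|a|<n$, which can exceed $M^n$ when $M<1$. This does not affect the argument in any way---just redefine $M:=\max\bigl(1,\max_{i,j}|\sigma_j(\alpha_i)|\bigr)$---but it is worth stating cleanly.
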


\begin{corollary} \label{cor:algebraic-no-overlaps}
Let $\mu$ be the self-similar measure associated to a homogeneous WIFS with algebraic coefficients (i.e. the contraction ratio and the translations are algebraic). Then either there is an exact overlap, or
\[
D_q(\mu) = \min(\sdim(\mu,q),1) \quad\text{for all }q>1.
\]
\end{corollary}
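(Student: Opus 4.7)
The plan is to reduce the corollary to Theorem \ref{thm:main-homogeneous} by showing that, in the algebraic homogeneous setting, the absence of exact overlaps automatically forces exponential separation. The key observation is that the $k$-separation numbers $\Gamma_k$ are values of polynomials of controlled degree and bounded integer coefficients evaluated at a fixed finite tuple of algebraic numbers, so Lemma \ref{lem:algebraic} dichotomizes them into ``zero'' (an exact overlap) or ``exponentially lower bounded'' (exponential separation).

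First I would unfold the definitions. Since the IFS is homogeneous, for any two words $i=(i_1,\ldots,i_k)$ and $j=(j_1,\ldots,j_k)$ of length $k$ the maps $f_i$ and $f_j$ both have scaling factor $\lambda^k$, so by the definition of $d$ in the paper,
\[
d(f_i, f_j) = \Bigl| \sum_{\ell=1}^{k} \lambda^{\ell-1}\bigl(t_{i_\ell} - t_{j_\ell}\bigr) \Bigr|.
\]
View the right-hand side as the absolute value of a polynomial $P_{i,j}(x_0, x_1, \ldots, x_{|I|}) \in \Z[x_0,\ldots,x_{|I|}]$ evaluated at $(\lambda, t_1, \ldots, t_{|I|})$, where $x_0$ plays the role of $\lambda$. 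Each monomial is of the form $\pm x_0^{\ell-1} x_r$ with $\ell \le k$, so the total degree of $P_{i,j}$ is at most $k$, and every coefficient lies in $\{-1,0,1\}$.

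Next I would apply Lemma \ref{lem:algebraic} with the fixed algebraic numbers $\alpha_0=\lambda, \alpha_1=t_1, \ldots, \alpha_{|I|}=t_{|I|}$ and coefficient bound $h=1$. The lemma produces a single $\delta > 0$, depending only on the IFS (not on $k$ or on the choice of words), such that for every polynomial $P\in\Z[x_0,\ldots,x_{|I|}]$ of degree $n$ with coefficients bounded in modulus by $1$, either $P(\lambda,t_1,\ldots,t_{|I|})=0$ or $|P(\lambda,t_1,\ldots,t_{|I|})|\ge \delta^n$. Applying this to each $P_{i,j}$ with $n \le k$ yields the dichotomy: either some $P_{i,j}$ vanishes, producing $f_i=f_j$ with $i\ne j$ (an exact overlap), or $\Gamma_k \ge \delta^k$ for all $k$, i.e.\ the WIFS has exponential separation.

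In the ``no exact overlap'' branch, Theorem \ref{thm:main-homogeneous} applies verbatim and gives $D_\mu(q)=\min(\sdim(\mu,q),1)$ for every $q>1$, completing the proof. I expect no genuine obstacle: the only point requiring a little care is the bookkeeping in step one, namely that after passing to same-length words (justified in the introduction by the $ij$ vs.\ $ji$ trick) the distance is indeed a polynomial in the generators of the field of coefficients with total degree $O(k)$ and coefficients of size $O(1)$, so that the exponent $\delta$ of Lemma \ref{lem:algebraic} is uniform in $k$.
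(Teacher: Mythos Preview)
Your proposal is correct and follows essentially the same approach as the paper: express $d(f_i,f_j)=|f_i(0)-f_j(0)|$ as the value of an integer-coefficient polynomial of degree $O(k)$ and bounded coefficients at the algebraic tuple $(\lambda,t_1,\ldots,t_{|I|})$, invoke Lemma~\ref{lem:algebraic} to get the zero/exponential-lower-bound dichotomy, and then apply Theorem~\ref{thm:main-homogeneous}. Your write-up is in fact slightly more detailed than the paper's (which just notes the degree is at most $n+1$ and the coefficients are $\pm 1$), but the argument is the same.
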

\begin{proof}
Note that for any pair of sequences $i=(i_1,\ldots,i_n)$, $j=(j_1,\ldots,j_n)$, the difference $f_i(0)-f_j(0)$ can be written as $P_{i,j}(\lambda,t_1,\ldots,t_{|I|})$, where $P_{i,j}\in\Z[x_1,\ldots,x_{|I|+1}]$ has degree at most $n+1$ and coefficients $\pm 1$. Since we assume that there are no exact overlaps, $P_{i,j}(\lambda,t_1,\ldots,t_{|I|})\neq 0$ for $i\neq j$.  Lemma \ref{lem:algebraic} then guarantees that the IFS has exponential separation, so that Theorem \ref{thm:main-homogeneous} yields the corollary.
\end{proof}

Even if there are exact overlaps, the proof of Theorem \ref{thm:main-homogeneous} yields an expression for the $L^q$ dimensions of $\mu$. Recall that $\mu_n$ is the purely atomic measure given by
\[
\mu_n = *_{j=0}^{n-1} S_{\lambda^j}\Delta = \sum_{u\in I^n}  p_{u_1}\cdots p_{u_n}\, \delta_{f_u(0)}.
\]
\begin{corollary}  \label{cor:algebraic}
Let $\mu$ be the self-similar measure associated to a homogeneous WIFS with algebraic coefficients (i.e. the contraction ratio and the translations are algebraic). Define
\[
T_\mu = \lim_{n\to\infty} -\frac{1}{n}\log \|\mu_n\|_q^q .
\]
Then the limit in this definition exists, and
\[
D_q(\mu) = \min\left(\frac{T_\mu}{(q-1)\log(1/\lambda)},1\right).
\]
\end{corollary}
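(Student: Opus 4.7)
The plan is to run the proof of Theorem~\ref{thm:main-homogeneous} but replace the exponential-separation hypothesis with the Diophantine separation between \emph{distinct} atoms of $\mu_n$ provided by Lemma~\ref{lem:algebraic}, being careful that possible exact overlaps do not ruin the argument.

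First I would verify that the limit defining $T_\mu$ exists. Writing $\mu_{n+k}=\mu_n*S_{\lambda^n}\mu_k$ and using the elementary convexity inequality $(a+b)^q\ge a^q+b^q$ for $a,b\ge 0$ and $q\ge 1$ (which captures that coincident atoms only enlarge the $L^q$ norm), one gets $\|\mu_{n+k}\|_q^q\ge\|\mu_n\|_q^q\,\|\mu_k\|_q^q$. Hence $a_n:=-\log\|\mu_n\|_q^q$ is subadditive and Fekete's lemma gives $T_\mu=\lim_n a_n/n=\inf_n a_n/n$. Applying the same inequality to the projection of $\mu_n$ to $\mathcal{D}_{m(n)}$ yields $\|\mu_n^{(m(n))}\|_q^q\ge\|\mu_n\|_q^q$, and combining this with Lemma~\ref{lem:comparison-mu-m-mu} and taking $\liminf$ over $n$ produces the upper bound $\tau_\mu(q)\le T_\mu/\log(1/\lambda)$, so that $D_\mu(q)\le\min(T_\mu/((q-1)\log(1/\lambda)),1)$.

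Next I would prove the matching lower bound at every $q$ where $\tau_\mu$ is differentiable and $\tau_\mu(q)<q-1$. Just as in Corollary~\ref{cor:algebraic-no-overlaps}, the difference $f_i(0)-f_j(0)$ for words $i,j$ of length $n$ is a polynomial in the algebraic parameters $(\lambda,t_1,\ldots,t_{|I|})$ of degree $\le n+1$ with $\pm 1$ coefficients, so Lemma~\ref{lem:algebraic} furnishes $\delta>0$ for which $f_i(0)\neq f_j(0)$ implies $|f_i(0)-f_j(0)|\ge\delta^n$. Choosing $R\in\N$ with $\lambda^R\le\delta$, the distinct atoms of $\mu_n$ are at distance $\ge\lambda^{Rn}\ge 2^{-Rm(n)}$ and so lie in distinct intervals of $\mathcal{D}_{Rm(n)}$; therefore $\|\mu_n^{(Rm(n))}\|_q^q=\|\mu_n\|_q^q$ for every $n$. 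Applying Proposition~\ref{prop:ssm-scale-Rm-norm}, which does not need exponential separation, I obtain
\[
\tau_\mu(q)=\lim_n\frac{\log\|\mu_n\|_q^q}{n\log\lambda}=\frac{T_\mu}{\log(1/\lambda)},
\]
matching the upper bound.

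Finally I would promote this identity to all $q\in(1,\infty)$. Since $q\mapsto\log\|\mu_n\|_q^q$ is convex (a log-sum-exp in $q$), each $-\log\|\mu_n\|_q^q$ is concave, so the pointwise limit $q\mapsto T_\mu(q)$ is concave, hence continuous on $(1,\infty)$; likewise $\tau_\mu$ is concave and continuous. The set $\{q:\tau_\mu(q)<q-1\}$ is open, and differentiability points of $\tau_\mu$ form a dense subset, so the identity $\tau_\mu(q)=T_\mu(q)/\log(1/\lambda)$ extends by continuity to this whole set, where it coincides with the first entry of the $\min$. On the complement $\tau_\mu(q)=q-1$, so $D_\mu(q)=1$ and the already-established upper bound forces $T_\mu(q)/((q-1)\log(1/\lambda))\ge 1$, making the $\min$ equal to $1$ as well. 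The most delicate step is this final continuity argument---verifying concavity of $T_\mu$ robustly and ensuring that the approximating differentiability points stay inside $\{\tau_\mu<q-1\}$---but it involves no new ideas beyond those already present in the proof of Theorem~\ref{thm:main-homogeneous}.
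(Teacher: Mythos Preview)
Your proposal is correct and follows essentially the same route as the paper: both obtain $R$ from Lemma~\ref{lem:algebraic} so that distinct atoms of $\mu_n$ are $\lambda^{Rn}$-separated, deduce $\|\mu_n^{(Rm(n))}\|_q^q=\|\mu_n\|_q^q$, and then invoke Proposition~\ref{prop:ssm-scale-Rm-norm}. You are in fact more careful than the paper, which omits the Fekete argument for existence of $T_\mu$, the explicit treatment of the case $\tau_\mu(q)=q-1$, and the density/continuity extension from differentiability points of $\tau_\mu$; these are exactly the details you supply.
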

\begin{proof}
By Lemma \ref{lem:algebraic}, and arguing as in the proof of Corollary \ref{cor:algebraic-no-overlaps}, there is $R\in\N$ such that any two distinct atoms of $\mu_n$ are $\lambda^{Rn}$-separated. Suppose $D_q(\mu)<1$. By Proposition \ref{prop:ssm-scale-Rm-norm},
\[
\lim_{n\to\infty} \frac{\log \|\mu_{n}^{(Rm(n))}\|_q^q}{n\log\lam} = \tau(q).
\]
But $\| \mu_n^{(Rm(n))}\|_q^q = \|\mu_n\|_q^q$ since $2^{m(n)}\le \lambda^n$, so the claim follows.
\end{proof}

\subsection{Parametrized families and absolute continuity}
\label{subsec:appl-abscont}

Exponential separation holds outside of a small set of exceptions in parametrized families satisfying mild regularity and non-degeneracy assumptions:
\begin{lemma} \label{lem:parametrized-family}
Let $J\subset\R$ be a compact interval, and let $\lambda:J\to (-1,0)\cup(0,1)$ and $t_1,\ldots,t_\ell:J\to\R$ be real-analytic functions. For a pair of $\{1,\ldots,\ell\}$-valued sequences $i,j$, define
\[
g_{i,j}(u) = \sum_{k=0}^\infty \lambda(u)^k t_{i_k}(u) - \sum_{k=0}^{n-1} \lambda(u)^k t_{j_k}(u).
\]
Assume that if $i\neq j$ then $g_{i,j}$ is not identically zero. Then the IFS $\{ \lambda(u)x+ t_i(u)\}_{i=1}^\ell$ has exponential separation for all $u$ outside of a set $E\subset J$ of zero Hausdorff (and even packing) dimension.
\end{lemma}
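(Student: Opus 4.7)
The plan is to show that the exceptional parameter set
\[
E = \{u \in J : \text{the IFS at } u \text{ lacks exponential separation}\}
\]
has $\pdim E = 0$ (hence $\hdim E = 0$) by a Borel--Cantelli argument applied to sublevel sets of the analytic functions that encode cylinder separations. Recall that $u \in E$ iff for every $\delta>0$, $\Gamma_k(u)\le\delta^k$ for all but finitely many $k$.

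First I would exploit homogeneity. Since $\lambda_i(u)\equiv \lambda(u)$ for all $i$, for any two distinct words $i,j\in\{1,\ldots,\ell\}^k$ the cylinder separation reduces to
\[
\Delta_{i,j}^k(u) := f_{i_1}^u\!\circ\cdots\circ f_{i_k}^u(0) - f_{j_1}^u\!\circ\cdots\circ f_{j_k}^u(0) = \sum_{m=0}^{k-1}\lambda(u)^m\bigl(t_{i_{m+1}}(u)-t_{j_{m+1}}(u)\bigr),
\]
so $\Gamma_k(u)=\min_{i\ne j}|\Delta_{i,j}^k(u)|$. Each $\Delta_{i,j}^k$ is real-analytic on $J$, and because $\sup_J|\lambda|<1$ and the $t_i$ are analytic, the family $\{\Delta_{i,j}^k\}_{k,i,j}$ extends holomorphically to a fixed complex neighborhood $U\supset J$ with a single uniform sup-norm bound $M_0$. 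Moreover $\Delta_{i,j}^k \not\equiv 0$: appending any common infinite tail $(w_m)_{m\ge k}$ to $i$ and $j$ yields distinct infinite sequences $\tilde i\ne\tilde j$ whose tail contributions cancel, giving $g_{\tilde i,\tilde j}=\Delta_{i,j}^k$, and the hypothesis forces this to be nonzero.

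The main obstacle is a quantitative Lojasiewicz-type sublevel estimate that is \emph{uniform in $k$}: I would establish constants $C,\alpha>0$, depending only on $(\lambda,t_1,\ldots,t_\ell,J)$, such that for every $k$, every distinct pair $(i,j)$ of length-$k$ sequences, and every $\varepsilon>0$,
\[
\{u\in J:|\Delta_{i,j}^k(u)|\le\varepsilon\} \ \text{ is covered by at most } O(k) \text{ intervals of length }\le C\varepsilon^\alpha.
\]
I would attack this by combining the uniform holomorphic bound on $U$ with a Cartan--Remez inequality (uniformly bounded analytic functions on a complex disk, with sup-norm bounded below on a real subinterval, have $\varepsilon$-sublevel sets of controlled size). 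The delicate point is bounding $\|\Delta_{i,j}^k\|_{L^\infty(J)}$ below uniformly in $(k,i,j)$; for this I would use the self-similar telescoping structure of $\Delta_{i,j}^k$ together with a compactness argument on the shift space of sequences to reduce to a finitary base case. A uniform exponent $\alpha$ independent of $k$ is essential, since the exponential growth $\ell^{2k}$ of the number of pairs must be beaten by $\varepsilon^\alpha$.

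Granting the sublevel estimate, the conclusion is a routine Borel--Cantelli. Set $B_k^M=\{u\in J:\Gamma_k(u)\le 2^{-Mk}\}$, a union of at most $\ell^{2k}$ sublevel sets; the covering estimate yields a cover of $B_k^M$ by $\le C\ell^{2k}k$ intervals of length $\le C\cdot 2^{-\alpha Mk}$, whence $\limsup_{k\to\infty}\ubdim B_k^M\le 2\log\ell/(\alpha M\log 2)$. Since
\[
E \ \subseteq\ \bigcap_{M\ge 1}\bigcup_{N\ge 1}\bigcap_{k\ge N}B_k^M,
\]
and $\pdim\bigcap_{k\ge N}B_k^M \le \ubdim B_k^M$ for every $k\ge N$, the countable stability of packing dimension yields $\pdim E\le 2\log\ell/(\alpha M\log 2)$ for every $M$. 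Letting $M\to\infty$ gives $\pdim E=0$, as required.
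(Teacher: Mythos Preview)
The paper does not prove this lemma; it simply cites \cite[Theorem 1.8]{Hochman14}. Your overall strategy---a Borel--Cantelli argument driven by uniform sublevel estimates for a compact analytic family, with compactness coming from the shift space---is exactly Hochman's approach, so in that sense the proposal is on the right track.

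There is, however, a concrete gap. You assert that the ``delicate point'' is to bound $\|\Delta_{i,j}^k\|_{L^\infty(J)}$ from below uniformly in $(k,i,j)$. This is simply false: if $i$ and $j$ share a common prefix of length $m$, then
\[
\Delta_{i,j}^k(u)=\lambda(u)^m\,\Delta_{\sigma^m i,\,\sigma^m j}^{\,k-m}(u),
\]
so $\|\Delta_{i,j}^k\|_\infty\le(\sup_J|\lambda|)^m\cdot O(1)\to 0$ as $m\to\infty$. No compactness argument can produce a uniform positive lower bound on these sup-norms; the claim you aim to prove is wrong as stated, and with it the Cartan--Remez constant $C$ in your sublevel estimate would blow up.

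The repair is precisely the factorization above (your ``telescoping'' hints at it, but you drew the wrong conclusion from it). After stripping the prefix, the residual factor equals $g_{\tilde i,\tilde j}$ for infinite sequences with $\tilde i_0\neq\tilde j_0$; the map $(\tilde i,\tilde j)\mapsto g_{\tilde i,\tilde j}$ is continuous from the compact product shift into the space of analytic functions on a fixed complex neighborhood of $J$, and none of these functions vanish identically. Compactness then gives a uniform bound $N$ on the order of vanishing, hence a uniform Lojasiewicz exponent $\alpha=1/N$ and a uniform $C$ for this family. The stripped factor $\lambda(u)^m$ satisfies $|\lambda(u)|^m\ge\lambda_{\min}^k$ with $\lambda_{\min}=\min_J|\lambda|>0$, so
\[
\{\,|\Delta_{i,j}^k|\le 2^{-Mk}\,\}\subset\{\,|g_{\tilde i,\tilde j}|\le (2^{-M}\lambda_{\min}^{-1})^{k}\,\},
\]
which is covered by $O(1)$ intervals of length $\le C\,(2^{-M}\lambda_{\min}^{-1})^{\alpha k}$. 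For $M>\log_2(1/\lambda_{\min})$ this decays exponentially, and your Borel--Cantelli conclusion then goes through verbatim (the final step $M\to\infty$ is unaffected). In short: do not try to bound $\|\Delta_{i,j}^k\|_\infty$ below; factor out $\lambda^m$, run compactness on the residual family, and absorb the exponential loss into the choice of $M$.
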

See \cite[Theorem 1.8]{Hochman14} for the proof and some further discussion. Now Theorem \ref{thm:main-homogeneous} shows that for parametrized families of WIFS satisfying the assumptions of Lemma \ref{lem:parametrized-family}, there is a zero-dimensional exceptional set of parameters outside of which the $L^q$ dimensions of the self-similar measures have the value predicted by the overlaps conjecture (note also that the exceptional set is independent of the probability weights).

When $\sdim(\mu,q)>1$, the overlaps conjecture predicts that (in the absence of exact overlaps) $D_\mu(q)=1$, but in fact it is plausible that under the same assumptions the measure $\mu$ is absolutely continuous with a $L^q$ density. While this is of course still open, and appears to be even harder than the dimension version of the overlaps conjecture, we have the following result for parametrized families:

\begin{theorem} \label{thm:abs-continuity}
Let $J\subset\R$ be an closed interval, and let $\lambda:J\to (-1,0)\cup(0,1)$ and $t_1,\ldots,t_\ell:J\to\R$ be real-analytic functions. For a pair of $\{1,\ldots,\ell\}$-valued sequences $i,j$, define
\[
g_{i,j}(u) = \sum_{k=0}^\infty \lambda(u)^k t_{i_k}(u) - \sum_{k=0}^{\infty} \lambda(u)^k t_{j_k}(u).
\]
Assume that if $i\neq j$ then $g_{i,j}$ is not identically zero. Then then there is a set $E\subset J$ of zero Hausdorff dimension such that the following holds for all $u\in J\setminus E$: if $\mu$ is a self-similar measure associated to the IFS $( \lambda(u)x+t_i(u))_{i=1}^\ell$ and a probability vector $(p_i)_{i=1}^\ell$, and if $\sdim(\mu,q)>1$ for some $q\in (1,\infty)$, then  $\mu$ is absolutely continuous and its Radon-Nikodym density is in $L^q$.
\end{theorem}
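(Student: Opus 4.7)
The plan is to combine Lemma \ref{lem:parametrized-family} (exponential separation in parametrized families) with Theorem \ref{thm:main-homogeneous} (the main $L^q$-dimension formula), and then upgrade from the dimension identity $D_\mu(q)=1$ to actual $L^q$ integrability by exploiting the self-similar convolution structure.

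First, I would apply Lemma \ref{lem:parametrized-family} to obtain a zero-dimensional exceptional set $E\subset J$ outside of which the IFS $(\lambda(u)x+t_i(u))_i$ has exponential separation. The non-degeneracy hypothesis on $g_{i,j}$ in Theorem \ref{thm:abs-continuity} is essentially the same as in Lemma \ref{lem:parametrized-family} (both are real-analytic functions whose vanishing encodes algebraic coincidence of the maps and their iterates). This $E$ will serve as the exceptional set in the conclusion.

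Next, fix $u\in J\setminus E$ and a probability vector $(p_i)$ with $\sdim(\mu,q)>1$. Since $q'\mapsto\sdim(\mu,q')$ is real-analytic, there exists $q^*>q$ with $\sdim(\mu,q^*)>1$; equivalently, $\|\Delta\|_{q^*}^{q^*}<\lambda^{q^*-1}$, where $\Delta=\sum_i p_i\,\delta_{t_i(u)}$. By Theorem \ref{thm:main-homogeneous} applied at $q^*$, $D_\mu(q^*)=1$, i.e.\ $\tau_\mu(q^*)=q^*-1$.

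The heart of the proof is to upgrade $D_\mu(q^*)=1$ to $\mu\in L^{q^*}$. The relevant criterion is the uniform bound $\sup_m 2^{m(q^*-1)} S_m(\mu,q^*)<\infty$, equivalent to boundedness of the dyadic density approximations $f_m=2^m\sum_J \mu(J)\mathbf{1}_J$ in $L^{q^*}$; by martingale convergence this forces $\mu$ to have an $L^{q^*}$ density. The identity $\tau_\mu(q^*)=q^*-1$ is only a $\liminf$ bound, so extra input is needed. Here one leverages the self-similar convolution $\mu=\mu_n*S_{\lambda^n}\mu$ together with the \emph{strict} inequality $\|\Delta\|_{q^*}^{q^*}<\lambda^{q^*-1}$: Lemmas \ref{lem:discr-norm-conv-equivalence} and \ref{lem:comparison-mu-m-mu}, combined with discrete Young's inequality, give $\|\mu^{(m)}\|_{q^*}\le C_{q^*}\|\mu_n^{(m)}\|_{q^*}$, and under exponential separation the atoms of $\mu_n$ are $\lambda^{Rn}$-separated for infinitely many $n$, so $\|\mu_n\|_{q^*}^{q^*}=\|\Delta\|_{q^*}^{q^*n}\approx\lambda^{nT(\mu,q^*)}$. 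Choosing $n=n(m)$ appropriately and interpolating across scales where $\mu_n$ is not yet fully resolved by $\mathcal{D}_m$ should yield the uniform bound. Finally, $\mu\in L^{q^*}\subset L^q$ on its bounded support since $q<q^*$.

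The main obstacle is this last step: converting the $\liminf$-type $L^{q^*}$ spectrum identity into a uniform-in-$m$ bound on $\|f_m\|_{q^*}$. The strict inequality $\sdim(\mu,q^*)>1$ provides the margin, but one must carefully control the $L^{q^*}$ norm of $\mu_n^{(m)}$ at intermediate scales $2^{-m}$ with $\lambda^{Rn}<2^{-m}<\lambda^n$, where distinct atoms of $\mu_n$ may still collide within a common dyadic cell, and the naive bound $\|\mu_n^{(m)}\|_{q^*}=\|\mu_n\|_{q^*}$ fails. The resolution must use that convolving with $S_{\lambda^n}\mu$ further smooths the $L^{q^*}$ profile by a quantitative amount that absorbs the overlap, and this is where the full force of the main theorem (and possibly Theorem \ref{thm:conv-with-ssm-flattens}) rather than just its dimension consequence is used.
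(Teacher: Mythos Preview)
Your proposal has a genuine gap at exactly the point you flag as ``the main obstacle'': upgrading $D_\mu(q^*)=1$ to $\mu\in L^{q^*}$. The implication $D_\mu(q)=1\Rightarrow\mu$ has an $L^q$ density is false in general (it fails already for singular measures with full $L^q$ dimension), and the sketch you give for closing the gap does not work. The bound $\|\mu^{(m)}\|_{q^*}\le C\|\mu_n^{(m)}\|_{q^*}$ from Lemma~\ref{lem:comparison-mu-m-mu} goes the wrong way for your purposes: it compares $\mu$ to $\mu_n$ at the \emph{same} scale $2^{-m}$, so you are still left needing a uniform-in-$m$ bound on $\|\mu_n^{(m)}\|_{q^*}^{q^*}/2^{-m(q^*-1)}$, which is precisely as hard as the original problem. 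The strict inequality $\sdim(\mu,q^*)>1$ and Theorem~\ref{thm:conv-with-ssm-flattens} give exponential smoothing under convolution, but not the uniform control over the $o(1)$ term in $\tau$ that you would need; nothing in the argument bridges the gap between ``$2^{-(q^*-1-o(1))m}$'' and ``$O(2^{-(q^*-1)m})$''.

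The paper's proof avoids this entirely by a different mechanism. One writes $\mu_u=\nu_u^{(k)}*\rho_u^{(k)}$, where $\nu_u^{(k)}$ is the self-similar measure obtained by skipping every $k$-th digit and $\rho_u^{(k)}$ the one obtained by keeping only every $k$-th digit. Then $\sdim(\nu_u^{(k)},q)=(1-1/k)\sdim(\mu_u,q)$, so for large $k$ one still has $\sdim(\nu_u^{(k)},q)>1$, and Lemma~\ref{lem:parametrized-family} plus Theorem~\ref{thm:main-homogeneous} give $D(\nu_u^{(k)},q)=1$ off a zero-dimensional set. Separately, an Erd\H{o}s-type argument shows that $\rho_u^{(k)}$ has power Fourier decay $|\widehat{\rho_u^{(k)}}(\xi)|\le C|\xi|^{-\delta}$ off another zero-dimensional set. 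One then invokes Theorem~\ref{thm:convolution-abs-cont}: if $D_\nu(q)=1$ and $\rho$ has power Fourier decay, then $\nu*\rho$ is absolutely continuous with density in $L^q$. The Fourier-decaying convolution factor is the missing ingredient that converts the dimension statement into genuine $L^q$ integrability; your proposal has no analogue of it.
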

This theorem provides the correct range for the possibility of having an $L^q$ density (up to the endpoint), since measures $\mu$ with an $L^q$ density satisfy $D(\mu,q)=1$; this follows from the inequality $(\int_J f)^q \le |J|^{q-1} \int_J f^q$ for all intervals $J$, where $f$ is the $L^q$ density of $\mu$. The proof of the theorem follows the ideas from \cite{Shmerkin14,ShmerkinSolomyak16}; the only new element is the stronger input provided by Theorem \ref{thm:main-homogeneous}.

Recall that the Fourier transform of a  measure $\rho\in\mathcal{P}$  is defined as
\[
\widehat{\rho}(\xi) = \int \exp(2\pi i x\xi)\,d\rho(x).
\]
The following result asserts that convolving a measure of full $L^q$ dimension and another measure with power Fourier decay results in an absolutely continuous measure with an $L^q$ density; see \cite[Theorem 4.4]{ShmerkinSolomyak16} for the proof.
\begin{theorem} \label{thm:convolution-abs-cont}
Let $\nu,\rho\in\mathcal{P}$ be such that $D_\nu(q)=1$ for some $q>1$ and $\rho$ satisfies the Fourier decay estimate
\[
|\widehat{\rho}(\xi)| \le C|\xi|^{-\delta}
\]
for some $C,\delta>0$. Then the convolution $\nu*\rho$ is absolutely continuous and its Radon-Nikodym density is in $L^q$.
\end{theorem}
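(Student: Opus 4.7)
The plan is to show that the mollifications $(\nu * \rho) * \phi_\eps$ form a uniformly bounded family in $L^q$ as $\eps \to 0$, for $\phi_\eps$ a smooth approximate identity. Since $L^q$ is reflexive for $q > 1$, Banach--Alaoglu then yields a weak-$*$ accumulation point that coincides with the distributional $\nu * \rho$ and hence is its Radon--Nikodym density.

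The main tool is a Littlewood--Paley decomposition of $\rho$ in frequency: write $\rho = \rho_{\le 0} + \sum_{j \ge 1} \rho_j$ with $\widehat{\rho_j}$ supported in $\{|\xi| \asymp 2^j\}$, so that $\|\widehat{\rho_j}\|_\infty \lesssim 2^{-j\delta}$. The Schwartz piece $\rho_{\le 0}$ contributes a smooth summand trivially in $L^q$, and the problem reduces to geometric summability of $\|\nu * \rho_j\|_q$ in $j$. In the model case $q = 2$, Plancherel gives
\[
\|\nu * \rho_j\|_2^2 \;\lesssim\; 2^{-2j\delta} \int_{|\xi| \asymp 2^j} |\widehat{\nu}(\xi)|^2\,d\xi,
\]
and the hypothesis $D_\nu(2) = 1$ (which holds by monotonicity of $p \mapsto D_\nu(p)$ whenever the given $q \ge 2$) implies that the Fourier energy is $\lesssim 2^{j\eta}$ for every $\eta > 0$; taking $\eta < 2\delta$ closes the sum.

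The main obstacle is extending the conclusion to all $q > 1$. For $q > 2$, one would interpolate the $L^2$-bound above with the trivial $L^\infty$-bound $\|\nu * \rho_j\|_\infty \le \|\rho_j\|_\infty \lesssim 2^{j(1-\delta)}$ (from Fourier inversion on the annulus of size $2^j$) via Riesz--Thorin; any threshold on $\delta$ that emerges can be beaten by passing first to a sufficiently high self-convolution $\rho^{*N}$, whose Fourier transform decays at rate $N\delta$. For $1 < q < 2$, monotonicity no longer gives $D_\nu(2) = 1$ and the $L^2$ route is blocked. Here one instead exploits the uniform Frostman bound $\nu(B(x,r)) \lesssim r^{(1-1/q)s}$ for every $s < 1$ coming from Lemma \ref{lem:Lq-dim-to-Frostman-exp}, paired with the spatial localization of $\rho_j$ on scale $2^{-j}$, to obtain a pointwise estimate on $(\nu * \rho_j)(x)$ and sum directly. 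The delicate marriage of the Fourier-analytic input from $\rho$ with the geometric input from $\nu$ across the range of $q$ is what requires real work, and is carried out in \cite[Theorem 4.4]{ShmerkinSolomyak16}.
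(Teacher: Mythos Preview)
The paper does not actually prove this theorem; it simply refers the reader to \cite[Theorem 4.4]{ShmerkinSolomyak16}. Your proposal ultimately does the same, so in that narrow sense it matches the paper.

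Your added sketch has the right architecture --- a Littlewood--Paley decomposition of $\rho$, with the $q=2$ case handled via Plancherel and the energy bound $\int_{|\xi|\asymp 2^j}|\widehat\nu|^2\,d\xi\lesssim 2^{j\eta}$ coming from $D_\nu(2)=1$ --- and that part is essentially how the argument in \cite{ShmerkinSolomyak16} begins. But your extension to $q>2$ contains a genuine error: passing to $\rho^{*N}$ changes the target measure. You would then be proving that $\nu*\rho^{*N}$ lies in $L^q$, and since $\nu*\rho^{*N}=(\nu*\rho)*\rho^{*(N-1)}$ and convolution can only add smoothness, there is no route from this back to $\nu*\rho\in L^q$. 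The threshold on $\delta$ that appears from straight $L^2$--$L^\infty$ interpolation is real and must be removed by a different mechanism; the self-convolution trick simply does not apply here.

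The $1<q<2$ sketch is too vague to carry weight. The Frostman exponent one gets from Lemma~\ref{lem:Lq-dim-to-Frostman-exp} is only about $1-1/q<1/2$, and a pointwise bound on $(\nu*\rho_j)(x)$ built from that, combined with $\|\rho_j\|_\infty\lesssim 2^{j(1-\delta)}$, does not produce a summable $L^q$ estimate on its own. What is actually needed (and what \cite{ShmerkinSolomyak16} does) is to use the full strength of $D_\nu(q)=1$ at the level of $L^q$ norms, namely $\|\nu*\psi_j\|_q\lesssim 2^{j\eta}$ for every $\eta>0$, rather than only the derived Frostman exponent.
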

The proof of this theorem shows that, additionally, $\nu*\rho$ has fractional derivatives in $L^q$.

\begin{proof}[Proof of Theorem \ref{thm:abs-continuity}]
Fix a weight $(p_1,\ldots,p_\ell)$ for some $\ell\ge 2$. For $u\in J$, let $\mu_u$ be the self-similar measure associated with the WIFS $(\lambda(u)x+t_i(u),p_i)_{i=1}^\ell$. We also denote
\[
\Delta(u) = \sum_{i=1}^\ell p_i \, \delta_{t_i(u)}.
\]
Fix $k\in\N$. Using the convolution structure of $\mu_u$, we decompose
\begin{equation} \label{eq:decomp-convolution}
\mu_u = \left(*_{k\nmid j} S_{\lambda(u)^j}\Delta(u)\right)*\left(*_{k\mid j} S_{\lambda(u)^j}\Delta(u)\right)=:\nu_u^{(k)}*\rho_u^{(k)}.
\end{equation}
We can think of $\rho_u^{(k)}$ and $\nu_u^{(k)}$ as the measures obtained from the construction of $\mu_u$ by ``keeping only every $k$-th digit'' and ``skipping every $k$-th digit'' respectively. Both $\rho_u^{(k)}$ and $\nu_u^{(k)}$ are, again, self-similar measures arising from homogeneous IFS's. Indeed, $\rho_u^{(k)}$ is the invariant measure for the IFS $(\lambda(u)^k x+t_i(u),p_i)_{i=1}^\ell$. The WIFS generating $\nu^{(k)}$ is more cumbersome to write down: it consists of $\ell^{k-1}$ maps, indexed by sequences $i\in\{1,\ldots,\ell\}^{k-1}$. The maps and weights are given by
\begin{align*}
g_{u,i}(x) &= \lambda(u)^k(x)+ \sum_{j=0}^{k-2} t_{i_{j+1}} \lambda(u)^j,\\
p_i &= p_{i_1}\cdots p_{i_{k-1}}.
\end{align*}
A short calculation shows that, for any $q>1$,
\begin{equation} \label{eq:dim-nu-k}
\sdim(\nu_u^{(k)},q) = (1-1/k) \sdim(\mu_u,q).
\end{equation}
On the other hand, it is easy to check that (for each $k$) the family of IFS's generating $\nu_u^{(k)}$ also satisfies the assumptions of Lemma \ref{lem:parametrized-family}. Hence there are sets $E'_k$ of zero Hausdorff dimension such that the WIFS generating $\nu_u^{(k)}$ has exponential separation for all $u\in J\setminus E'_k$. Letting $E'=\cup_k E'_k$ and apply Theorem \ref{thm:main-homogeneous}, we deduce that $E'$ has zero Hausdorff dimension, and if $u\in J\setminus E$ then
\[
D(\nu_u^{(k)},q) = \min((1-1/k) \sdim(\mu_u),q) \quad\text{for all }k\in\N.
\]

Turning to the measures $\rho_u^{(k)}$, we claim hat there are exceptional sets $E''_k$ of zero Hausdorff dimension such that if $u\in J\setminus E''_k$, then $\rho_u^{(k)}$ has power Fourier decay, that is, there are $C(u,k),\delta(u,k)>0$ such that
\[
|\widehat{\rho_u^{(k)}}(\xi)| \le C(u,k)|\xi|^{-\delta(u,k)}.
\]
This follows by variants of an argument that goes back to Erd\H{o}s \cite{Erdos40}. If the function $\lambda(u)$ is nonconstant then, by splitting $J$ into finitely many intervals and reparametrizing, we may assume that $\lambda(u)=u$. This case is closer to Erd\H{o}s original argument; see e.g. \cite[Proposition 2.3]{Shmerkin14} for a detailed exposition. Suppose now that $\lambda(u)\equiv \lambda$. In this case we must have $\ell\ge 3$. Indeed, suppose $\ell=2$. Replacing $t_1(u)$ by $0$ and $t_2(u)$ by $1$ has the effect of rescaling and translating the measures $\mu_u$, which does not affect the claim. If $|\lam|<1/2$, then $\sdim(\mu,q)<1$ for any $q$ and there is nothing to do, while if $|\lam|\ge 1/2$, there are two sequences $i,j\in\{0,1\}^\N$ such that $\sum_{k=0}^\infty (i_k-j_k)\lam^k=0$, and this implies that the non-degeneracy assumption fails. Hence we assume that $\ell\ge 3$ from now on. In this case, the function
\[
h(u)=\frac{t_3(u)-t_1(u)}{t_2(u)-t_1(u)}
\]
is non-constant and real-analytic outside of a finite set of $u\in J$ (where the denominator vanishes). Otherwise, if either the denominator or $h(u)$ itself were constant, the non-degeneracy condition would fail. As before, this shows that we may assume $h(u)=u$. The claim now follows from \cite[Proposition 3.1]{ShmerkinSolomyak16}. Let $E''=\cup_k E''_k$.

Set $E=E'\cup E''$ and fix $u\in J\setminus E$. If $\sdim(\mu,q)>1$, then \eqref{eq:dim-nu-k} ensures that $D(\nu_u^{(k)},q)=1$ provided $k$ is taken large enough. Since also $\rho_u^{(k)}$ has power Fourier decay by the definition of $E''\subset E$, the decomposition \eqref{eq:decomp-convolution} together with Theorem \ref{thm:convolution-abs-cont} show that $\mu_u$ is absolutely continuous with an $L^q$ density, finishing the proof.
\end{proof}

\subsection{Bernoulli convolutions}
\label{subsect:appl-BCs}

Given $\lam\in (0,1)$, we define $\mu_\lambda$ as the distribution of the random sum $\sum_{n=0}^\infty X_n \lambda^n$, where the $X_n$ are IID and take values $0$ and $1$ with equal probability $1/2$. In other words, $\mu_\lambda$ is the self-similar measure associated to the WIFS $(\lambda x,1/2), (\lambda_x +1,1/2)$. The measures $\mu_\lambda$ are known as \emph{Bernoulli convolutions}.

When $\lam\in (0,1/2)$, the topological support of $\mu_\lambda$ is a self-similar Cantor set of dimension $\log 2/\log(1/\lambda)<1$; in particular, $\mu_\lambda$ is purely singular (and $D(\mu_\lambda,q)=\log 2/\log(1/\lambda)$ for all $q$). For $\lambda=1/2$, the Bernoulli convolution $\mu_\lambda$ is a multiple of Lebesgue measure on the interval $[0,1/(1-\lambda)]$. Understanding the smoothness properties of $\mu_\lambda$ for $\lambda\in (1/2,1)$ has been a major open problem since the 1930s. Although the problem is still very much open, dramatic progress has been achieved in the last few years. In this section we briefly state the consequences of the results of the previous sections for Bernoulli convolutions, and discuss their connections with other old and new results about them.

In two foundational papers, Erd\H{o}s \cite{Erdos39, Erdos40} showed that $\mu_\lambda$ is singular if $1/\lambda$ is a Pisot number (an algebraic integer $>1$ all of whose algebraic conjugates are $<1$ in modulus), and that $\mu_\lambda$ has a density in $C^k$ for almost all $\lambda$ sufficiently close to $1$ (depending on $k$). In the 1960s, Garsia \cite{Garsia62} exhibited an explicit infinite family of algebraic numbers $\lambda$ for which $\nu_\lambda$ is absolutely continuous. These remained the only explicit known parameters of absolute continuity until very recently when Varju \cite{Varju19}, introducing several new techniques, exhibited a new large family of algebraic numbers very close to $1$ for which $\mu_\lambda$ is absolutely continuous, with a density in $L\log L$.

In a celebrated paper, Solomyak \cite{Solomyak95} proved that $\mu_\lambda$ is absolutely continuous with an $L^2$ density for almost all $\lambda\in (1/2,1)$. Much more recently, in another landmark paper \cite{Hochman14} that we have already encountered several times, Hochman proved that $\hdim(\mu_\lambda)=1$ for all $\lambda$ outside of a set of $\lambda$ of zero Hausdorff (and even packing) dimension. Building on that, the author \cite{Shmerkin14} proved that $\mu_\lambda$ is absolutely continuous  for all $\lambda$ outside of a set of $\lambda$ of zero Hausdorff dimension. As an immediate application of Theorem \ref{thm:abs-continuity}, we have:
\begin{corollary}
There exists a set $E\subset (1/2,1)$ of zero Hausdorff dimension such that $\nu_\lambda$ is absolutely continuous and its density is in $L^q$ for all $q\in (1,\infty)$, for all $\lambda\in (1/2,1)\setminus E$.
\end{corollary}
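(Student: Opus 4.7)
The plan is to realize Bernoulli convolutions as the self-similar measures arising from a specific parametrized family to which Theorem \ref{thm:abs-continuity} applies directly. Fix any closed subinterval $J = [1/2+\eta, 1-\eta] \subset (1/2,1)$, and consider the analytic family given by $\lambda(u) = u$, $t_1(u) = 0$, $t_2(u) = 1$, with fixed probability weights $p_1 = p_2 = 1/2$. The associated self-similar measure at $u$ is precisely the Bernoulli convolution $\mu_u$.

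The first thing I would check is the non-degeneracy hypothesis of Theorem \ref{thm:abs-continuity}. For distinct sequences $i, j \in \{1,2\}^\N$, the function $g_{i,j}(u) = \sum_{k=0}^\infty u^k (t_{i_k} - t_{j_k})$ becomes a power series $\sum_{k=0}^\infty c_k u^k$ with coefficients $c_k \in \{-1,0,1\}$ and at least one nonzero $c_k$ (since $i_k \neq j_k$ at some $k$). A nonzero analytic function on an open subset of $(0,1)$ cannot vanish identically, so the hypothesis is verified. Next I would verify the similarity dimension condition: from the formula in Theorem \ref{thm:main-homogeneous} with $\sum_i p_i^q = 2^{1-q}$, one computes
\[
\sdim(\mu_u, q) = \frac{(1-q)\log 2}{(q-1)\log u} = \frac{\log 2}{\log(1/u)}>1
\]
for every $u \in (1/2,1)$ and every $q > 1$.

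Applying Theorem \ref{thm:abs-continuity} to this family on $J$ therefore produces a zero Hausdorff dimension exceptional set $E_\eta \subset J$ such that for every $u \in J \setminus E_\eta$ the measure $\mu_u$ is absolutely continuous with an $L^q$ density, for each $q > 1$ separately. Crucially, the exceptional set in Theorem \ref{thm:abs-continuity} is produced without reference to $q$; the $q$ enters only in the choice of truncation parameter $k$ in the proof (one takes $k$ large enough that $(1-1/k)\sdim(\mu_u, q) > 1$), which is permitted since the exceptional set is already a countable union over $k$. Hence the same $u \in J \setminus E_\eta$ works simultaneously for all $q > 1$.

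Finally, to pass from the closed subinterval $J$ to the full open interval $(1/2,1)$, I would apply the preceding to $J_n = [1/2+1/n, 1-1/n]$ for $n \ge 3$ and set $E = \bigcup_n E_{1/n}$. A countable union of sets of zero Hausdorff dimension has zero Hausdorff dimension, so $E \subset (1/2,1)$ is the desired exceptional set and the corollary follows. There is no real obstacle here beyond checking the hypotheses: the substantive content has already been absorbed into Theorem \ref{thm:abs-continuity}, whose proof combines the $L^q$-flattening consequence of Theorem \ref{thm:main-homogeneous} with the Erd\H{o}s-type Fourier decay of the complementary factor $\rho_u^{(k)}$.
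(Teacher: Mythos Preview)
Your proposal is correct and is exactly the argument the paper intends: the corollary is stated there as ``an immediate application of Theorem~\ref{thm:abs-continuity}'', and you have supplied precisely the verification of hypotheses (non-degeneracy of $g_{i,j}$, the computation $\sdim(\mu_u,q)=\log 2/\log(1/u)>1$, and the exhaustion of $(1/2,1)$ by compact subintervals) that makes this application go through. Your observation that the exceptional set in Theorem~\ref{thm:abs-continuity} is independent of $q$ is also correct and needed for the ``for all $q$'' conclusion.
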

We underline that the information that the density is in $L^q$ for $q>2$ is new even for a.e. parameter. Note that Corollary \ref{cor:Frostman-exp} shows that $\mu_\lambda$ has Frostman exponent $1-\e$ for every $\e>0$ for every $\lambda$ for which there is exponential separation. Although this is weaker than $L^q$ density for all $q>1$, exponential separation can be checked for some explicit parameters; in particular it holds for all rationals in $(1/2,1)$.

An active area of research concerns investigating the properties of $\mu_\lambda$ for algebraic values of $\lambda$. We only summarize some of the recent results in this area. The entropy of a purely atomic measure $\nu$ is defined as $H(\nu)=\sum_x \nu(x)\log(1/\nu(x))$. The \emph{Garsia entropy} associated to $\mu_\lambda$ is defined as
\[
h_\lambda = \lim_{n\to\infty} \frac{1}{n} H(\mu_{\lambda,n}),
\]
where $\mu_{\lambda,n}$ is the $n$-th step discrete approximation to $\mu_\lambda$, that is, the distribution of the finite random sum $\sum_{j=0}^{n-1} X_j \lambda^j$. It is well-known that the limit exists. The number $h_\lambda$ can also be interpreted as the entropy of the uniform random walk generated by the similarities $\lambda x$ and $\lambda x+1$.

It follows from Hochman's work \cite{Hochman14} (see \cite[\S 3.4]{BreuillardVarju18} for a detailed argument) that if $\lambda$ is algebraic, then
\begin{equation} \label{eq:Hocuman-Garsia-entropy}
\hdim(\mu_\lambda) = \min\left(\frac{h_\lambda}{\log(1/\lambda)},1\right).
\end{equation}
Breuillard and Varju \cite[Theorem 5]{BreuillardVarju18} gave bounds for $h_\lambda$ in terms of the Mahler measure $M_\lambda$ of $\lambda$ (see e.g. \cite[Eq. (1.1)]{BreuillardVarju18} for the definition of Mahler measure):
\begin{equation} \label{eq:breuillard-varju}
c\min(1,\log M_\lam) \le h_\lambda \le \min(1,\log M_\lam),
\end{equation}
where $c>0$ is a universal constant that they numerically estimate to be at least $0.44$. Using this theorem, they uncover a connection between Bernoulli convolutions and problems related to growth rates in linear groups. Very roughly, the idea is that the worst possible rate occurs for the group generated by the similarities $\lambda x$, $\lambda x+1$, which can be easily  realized as a linear group. An easy consequence of \eqref{eq:Hocuman-Garsia-entropy} and \eqref{eq:breuillard-varju} is that, assuming Lehmer's conjecture that the Mahler measure $M_\lam$ is either $1$ or bounded away from $1$, the Hausdorff dimension of $\mu_\lambda$ is $1$ for all algebraic numbers which are close enough to $1$. In \cite{BreuillardVarju19}, further progress was obtained; among many other results, the authors show that if $\hdim(\mu_\lambda)<1$ for some transcendental number $\lambda$, then $\lambda$ can be approximated by algebraic numbers with the same property. Hence, conditional on the Lehmer conjecture, $\hdim(\mu_\lambda)=1$ for all $\lambda$ close to $1$. Very recently, combining results from most of the papers mentioned in this section with a clever new argument, Varju \cite{Varju19b} achieved another major breakthrough by proving that $\hdim(\mu_\lambda)=1$ for all transcendental $\lambda\in (1/2,1)$.

The formula \eqref{eq:Hocuman-Garsia-entropy} makes it important to be able to compute Garsia entropy. An algorithm for this was developed in \cite{AFKP18}. Among other applications, this algorithm makes it possible to check that $\hdim(\mu_\lambda)=1$ for specific (new) algebraic values of $\lambda$.

All of these recent advances depend on the formula \eqref{eq:Hocuman-Garsia-entropy}, and hence apply only to Hausdorff dimension and not to $L^q$ dimensions. However, Corollary \ref{cor:algebraic} shows that the $L^q$ version of \eqref{eq:Hocuman-Garsia-entropy} remains valid: for all algebraic $\lambda\in (1/2,1)$,
\[
D(\mu_\lambda,q) =  \min\left( \frac{T_{q,\lambda}}{(q-1)\log(1/\lambda)},1\right),
\]
where
\[
T_{q,\lambda} = \lim_{n\to\infty }-\frac{1}{n} \log\|\mu_{\lambda,n}\|_q^q
\]
is an $L^q$ analog of Garsia entropy. Hence it would be interesting to know if there are $L^q$ versions of some of the results described above.

\subsection{Intersections of Cantor sets}

To finish the paper, we show how Theorem \ref{thm:main-homogeneous} can be used to obtain strong bounds on the dimensions of intersections of certain Cantor sets. Indeed, a conjecture of Furstenberg about the dimensions of intersections of $\times 2$, $\times 3$-invariant closed subsets of the circle was the main motivation for the results of \cite{Shmerkin19}. While the resolution of Furstenberg's intersection conjecture requires a more general version of Theorem \ref{thm:main-homogeneous} and is therefore beyond the scope of this survey, we will still be able to derive other intersection bounds.

In the following simple lemma we show how Frostman exponents (and therefore, by Lemma \ref{lem:Lq-dim-to-Frostman-exp}, also $L^q$ dimensions) of projected measures give information about the size of fibers. We recall the definition of upper box-counting (or Minkowski) dimension in a totally bounded metric space $(X,d)$. Given $A\subset X$, let $N_\e(A)$ denote the maximal cardinality of an $\e$-separated subset of $A$. The upper box-counting dimension of $A$ is then defined as
\[
\ubdim(A) = \limsup_{\e\downarrow 0} \frac{\log(N_\e(A))}{\log(1/\e)}.
\]
\begin{lemma} \label{lem:Frostman-exp-to-small-fiber}
Let $X$ be a compact metric space, and suppose $\pi:X\to \R$ is a Lipschitz map. Let $\mu$ be a probability measure on $X$ such that $\mu(B(x,r)) \ge r^s$ for all $x\in X$ and all sufficiently small $r$ (independent of $x$). If $\pi\mu$ has Frostman exponent $\alpha$, then there exists $C>0$ such that for all balls $B_\e$ of radius $\e$ in $\R$, any $\e$-separated subset of $\pi^{-1}(B_\e)$ has size at most $C\e^{-(s-\alpha)}$.

In particular, for any $y\in\R$,
\[
\ubdim(\pi^{-1}(y)) \le s-\alpha
\]
\end{lemma}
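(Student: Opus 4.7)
The plan is to use a straightforward volume/mass-counting argument. Let $\{x_1,\ldots,x_N\}$ be an $\varepsilon$-separated subset of $\pi^{-1}(B_\varepsilon)$, where $B_\varepsilon$ is a ball of radius $\varepsilon$ in $\R$. First, I would observe that the balls $B(x_i,\varepsilon/2)$ in $X$ are pairwise disjoint by the separation hypothesis, so by the lower Frostman-type bound on $\mu$, each satisfies $\mu(B(x_i,\varepsilon/2)) \ge (\varepsilon/2)^s$ once $\varepsilon$ is small enough.

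Next, I would project these balls to $\R$. Since $\pi$ is Lipschitz with some constant $L$, $\pi(B(x_i,\varepsilon/2)) \subset B(\pi(x_i), L\varepsilon/2)$, and since $\pi(x_i) \in B_\varepsilon$, all of these projected balls lie inside $\tilde{B}_\varepsilon$, a ball in $\R$ of radius $(1+L/2)\varepsilon$ centered at the center of $B_\varepsilon$. Thus every $B(x_i,\varepsilon/2) \subset \pi^{-1}(\tilde{B}_\varepsilon)$. Using the disjointness in $X$ and the Frostman exponent of $\pi\mu$, I would chain the inequalities
\[
N\,(\varepsilon/2)^s \;\le\; \sum_{i=1}^N \mu\bigl(B(x_i,\varepsilon/2)\bigr) \;\le\; \mu\bigl(\pi^{-1}(\tilde{B}_\varepsilon)\bigr) \;=\; \pi\mu(\tilde{B}_\varepsilon) \;\le\; C\bigl((1+L/2)\varepsilon\bigr)^\alpha.
\]
Rearranging gives $N \le C' \varepsilon^{\alpha-s} = C'\varepsilon^{-(s-\alpha)}$, which is the first claim.

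For the box-counting consequence, I would fix $y\in\R$ and for each small $\varepsilon>0$ apply the first part with $B_\varepsilon=B(y,\varepsilon)$. Since $\pi^{-1}(y)\subset\pi^{-1}(B(y,\varepsilon))$, any $\varepsilon$-separated subset of $\pi^{-1}(y)$ is also $\varepsilon$-separated inside $\pi^{-1}(B(y,\varepsilon))$, so $N_\varepsilon(\pi^{-1}(y)) \le C'\varepsilon^{-(s-\alpha)}$. Taking logarithms and dividing by $\log(1/\varepsilon)$, the definition of upper box-counting dimension yields $\ubdim(\pi^{-1}(y))\le s-\alpha$.

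There is no real obstacle here; the only minor subtlety is handling the Lipschitz constant correctly so that the target ball in $\R$ still has radius $\Theta(\varepsilon)$ and the Frostman exponent of $\pi\mu$ can be applied with the extra constant absorbed into $C$. The argument uses the lower bound on $\mu$ and the upper bound on $\pi\mu$ in dual ways, and does not require any additional structure on $X$ beyond compactness.
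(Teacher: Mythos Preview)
Your proof is correct and follows essentially the same approach as the paper's: both use the disjointness of the balls $B(x_j,\varepsilon/2)$ together with the lower mass bound on $\mu$ to get a lower bound for the total $\mu$-mass, and then compare with the Frostman upper bound on the $\pi\mu$-mass of the $O(\varepsilon)$-ball into which everything projects. Your version simply makes the role of the Lipschitz constant and the box-counting deduction more explicit than the paper does.
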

\begin{proof}
Let $(x_j)_{j=1}^M$ be an $\e$-separated subset of $\pi^{-1}(B_\e)$ with $\e$ small. Then
\[
\mu\left(\bigcup_{j=1}^M B(x_j,\e/2)\right) \ge M (\e/2)^s,
\]
while the set in question projects onto an interval of size at most $O(\e)$. Hence $M=O(\e^{\alpha-s})$, giving the claim.
\end{proof}

We give one concrete application of Theorem \ref{thm:main-homogeneous} in conjunction with this lemma, and refer to \cite[\S 6.3]{Shmerkin19} for further examples. Let $p\ge 2$ be an integer, and let $D\subset\{0,1,\ldots,p-1\}$ be a proper subset. Let $A=A_{p,D}$ be the set of $[0,1]$ consisting of all points whose $p$-ary expansion has only digits from $D$. This is the self-similar set associated to the IFS $((x+j)/p: j\in D)$. For example, the middle-thirds Cantor set is the case $p=3$, $D=\{0,2\}$. We call such a set a \emph{$p$-Cantor set}.

\begin{corollary} 
Let $A\subset [0,1)$ be a $p$-Cantor set, $p\ge 2$. Then for every irrational number $t\in\R$ and any $u\in\R$,
\[
\ubdim(A\cap (t A+ u)) \le \max(2\hdim(A)-1,0).
\]
\end{corollary}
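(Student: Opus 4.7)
The plan is to realize $A \cap (tA + u)$ as a fiber of the Lipschitz projection $\pi_t(x, y) = x - ty$ on the product $A \times A$, bound the Frostman exponent of the pushforward measure via Corollary \ref{cor:Frostman-exp}, and conclude using Lemma \ref{lem:Frostman-exp-to-small-fiber}. Let $\mu$ denote the uniform self-similar measure for the IFS $(f_j(x) = (x + j)/p)_{j \in D}$; by the open set condition, $\mu$ is Ahlfors $s$-regular with $s = \hdim(A) = \log |D|/\log p$, so the product $\mu \times \mu$ satisfies $(\mu \times \mu)(B(z, r)) \gtrsim r^{2s}$ on $A \times A$, providing (up to an inessential constant) the lower Frostman hypothesis of Lemma \ref{lem:Frostman-exp-to-small-fiber}. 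The map $(x, y) \mapsto x$ is a bi-Lipschitz bijection from $(A \times A) \cap \pi_t^{-1}(u)$ onto $A \cap (tA + u)$, with inverse $x \mapsto (x, (x - u)/t)$ (using $t \neq 0$); in particular the two sets share the same upper box dimension.

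Next I would identify $\nu := \pi_{t,*}(\mu \times \mu) = \mu * S_{-t}\mu$ as itself a homogeneous self-similar measure. Writing $X = \sum_{n \geq 1} X_n p^{-n}$ and $Y = \sum_{n \geq 1} Y_n p^{-n}$ with $(X_n), (Y_n)$ independent IID sequences uniform on $D$, the random variable $X - tY$ has the distribution of the invariant measure for the homogeneous WIFS
\[
\left( g_{(j,k)}(z) = \frac{z + j - tk}{p},\ \frac{1}{|D|^2} \right)_{(j, k) \in D \times D},
\]
with uniform weights and similarity dimension $\log |D|^2/\log p = 2s$. Two distinct $n$-fold compositions differ in translation by $p^{-n}(A - tB)$ for integers $|A|, |B| < p^n$ with $(A, B) \neq (0, 0)$, and this difference is nonzero by irrationality of $t$. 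To promote this to exponential separation I would use the classical continued-fraction estimate that for $1 \leq q \leq p^n$, $\|tq\|_\Z \geq \|tq_k\|_\Z \geq 1/(2 q_{k+1})$, where $q_k$ is the largest convergent denominator $\leq p^n$: choosing $n_k := \lfloor \log_p q_{k+1} \rfloor$ makes $q_{k+1} \leq p \cdot p^{n_k}$, hence $\Gamma_{n_k} \gtrsim p^{-2 n_k}$ along the infinite subsequence $(n_k)$, so exponential separation holds with any $\delta < p^{-2}$. Corollary \ref{cor:Frostman-exp} then yields, for every $\alpha < \min(2s, 1)$, that $\nu$ has Frostman exponent $\alpha$.

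Assembling everything, Lemma \ref{lem:Frostman-exp-to-small-fiber} applied to $X = A \times A$, the measure $\mu \times \mu$, and the Lipschitz map $\pi_t$ gives $\ubdim((A \times A) \cap \pi_t^{-1}(u)) \leq 2s - \alpha$ for every $\alpha < \min(2s, 1)$; taking $\alpha \to \min(2s, 1)^-$ yields $\ubdim(A \cap (tA + u)) \leq \max(2s - 1, 0)$. The main obstacle is the uniform verification of exponential separation for every irrational $t$, since a direct application of Lemma \ref{lem:algebraic} only covers the algebraic case; the Diophantine ingredient above handles all irrationals but is very specific to the case where both sets in the intersection come from the same self-similar measure — for genuinely different self-similar pairs the analogous step is precisely the obstruction that requires the more general framework of \cite{Shmerkin19}.
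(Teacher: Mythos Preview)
Your proof is correct and follows the same overall architecture as the paper: realize the intersection as a fiber of a linear projection on $A\times A$, use Ahlfors regularity of $\mu\times\mu$, identify the pushforward as a homogeneous self-similar measure with uniform weights, establish exponential separation for every irrational $t$, and conclude via Corollary~\ref{cor:Frostman-exp} and Lemma~\ref{lem:Frostman-exp-to-small-fiber}.

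The one substantive difference is in how exponential separation is verified. You use the best-approximation property of continued-fraction convergents: choosing $n_k=\lfloor\log_p q_{k+1}\rfloor$ forces $|B|<p^{n_k}\le q_{k+1}$, whence $\|tB\|\ge 1/(2q_{k+1})\gtrsim p^{-n_k}$ and $\Gamma_{n_k}\gtrsim p^{-2n_k}$. The paper instead runs an elementary case analysis (due to Shmerkin--Solomyak) on the minimizing pair $(x_k,y_k)$ realizing $\Gamma_k=|x_k+ty_k|$: either one of $x_k,y_k$ vanishes infinitely often, or the ratio $x_k/y_k$ stabilizes, or the cross-ratio $z_k=x_k y_{k+1}-x_{k+1}y_k$ is a nonzero rational of denominator $O(p^{2k})$ infinitely often. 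Your route is cleaner if one is willing to invoke continued fractions; the paper's argument is entirely self-contained. Both exploit the same arithmetic feature---that the translation differences lie in $p^{-n}(\Z+t\Z)$---so your closing remark that this step is specific to intersecting $A$ with a rescaled copy of \emph{itself} applies equally to the paper's proof. One small wording issue: in your continued-fraction sentence the index $k$ plays two roles (first as $k(n)$ determined by $p^n$, then as the free index defining $n_k$); the logic is fine since all you actually need is $|B|<p^{n_k}\le q_{k+1}\Rightarrow\|tB\|\ge 1/(2q_{k+1})$, which does not require $q_k\le p^{n_k}$.
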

\begin{proof}
Let $A=A_{p,D}$, and let $\mu$ be the uniform self-similar measure on $A$. Since the IFS generating $A$ satisfies the open set condition, it is well-known, and not hard to see, that $\mu(B(x,r)) = \Theta(r^s)$ for all $x\in A$, with the implicit constant depending only on $p,D$. Hence the product measure $\mu\times\mu$ satisfies
\begin{equation} \label{eq:product-AR}
(\mu\times\mu)(B(z,r)) = \Theta(r^{2s})
\end{equation}
for all $z\in A\times A=\supp(\mu\times\mu)$.

Let $\Pi_t(x,y)=x+ty$. Then $\Pi_t(\mu\times\mu)$ is the uniform self-similar measure generated by the IFS
\[
\left( p^{-1}(x+i+tj): i,j\in D\right).
\]
We claim that this IFS has exponential separation for all irrational $t$. Assuming the claim, the corollary follows by combining Corollary \ref{cor:Frostman-exp} and Lemma \ref{lem:Frostman-exp-to-small-fiber} (keeping \eqref{eq:product-AR} in mind) .

The argument to establish exponential separation in this setting is due to B. Solomyak and the author, and was originally featured in \cite[Theorem 1.6]{Hochman14}. Fix $t\in\R\setminus\mathbb{Q}$. The separation number $\Gamma_k$ associated to $\Pi_t\mu$ has the form $x_k +  t y_k$, where $x_k, y_k$ have the form $\sum_{j=0}^{k-1} a_j p^{-j}$ with $a_j\in D-D$. Moreover, $x_k$ and $y_k$ cannot be simultaneously $0$ since this would imply an exact overlap in the IFS generating $A$. If either $x_k$ or $y_k$ are zero for infinitely many $k$, then $\Gamma_k \ge \min(1,t)p^{-k}$ for infinitely many $k$ and hence we are done. So assume $x_k y_k\neq 0$ for all $k\ge k_0$, and therefore
\[
\left|\frac{\Gamma_k}{y_k}-\frac{\Gamma_{k+1}}{y_{k+1}}\right| = \left|\frac{x_k}{y_k}-\frac{x_{k+1}}{y_{k+1}}\right| = \left|\frac{z_k}{y_k y_{k+1}}\right|,
\]
where $z_k=x_k y_{k+1}-x_{k+1}y_k$. If $z_k=0$ for all $k\ge k_1$, then for all $k\ge k_1$ we have
\[
\Gamma_k = |y_k(x_{k_1}/y_{k_1}+t)| \ge p^{-k-1}|x_{k_1}/y_{k_1}+t|
\]
so, again using the irrationality of $t$, there is exponential separation. It remains to analyze the case $z_k\neq 0$ for infinitely many $k$. For any such $k$, the quotient $z_k/(y_k y_{k+1})$ is a non-zero rational number of denominator at most $4 p^{2k+1}$. Since $|y_k|\le 2$ for all $k$, we conclude that there are infinitely many $k$ such that either $\Gamma_k \ge p^{-2k-1}/16$ or $\Gamma_{k+1}\ge p^{-2k-1}/16$. Thus exponential separation also holds in this case, finishing the proof.
\end{proof}

For rational $t$ the behaviour is completely different: it follows from \cite[Theoren 1.2]{BaranyRams14} that if $A=A_{p,D}$ is any $p$-Cantor set of dimension $>1/2$, and $p\nmid |D|^2$ (in particular this holds if $p$ is prime), then for every rational $t$ there are many values of $u$ such that
\[
\hdim(A\cap tA+u) > 2\hdim(A)-1.
\]
More precisely, for a given $t$ this holds for a typical $u$ chosen according to the natural self-similar measure on $A\times A$.


\end{document}